 \numberwithin{equation}{section}
\newcommand{\be}{\begin{equation}}
\newcommand{\ee}{\end{equation}}
\newcommand{\bee}{\begin{equation*}}
\newcommand{\eee}{\end{equation*}}
\newcommand{\supp}{\textup{supp}\,}
\numberwithin{equation}{section}
\newtheorem{theorem}{Theorem}[section]
\newtheorem{remark}[theorem]{Remark}
\newtheorem{lemma}[theorem]{Lemma}
\newtheorem{definition}[theorem]{Definition}
\newtheorem{proposition}[theorem]{Proposition}
\newtheorem{corollary}[theorem]{Corollary}
\newcommand{\R}{\mathbb{R}}
\newcommand{\ds}{\displaystyle}
\begin{document}

\title{Uniqueness and non-uniqueness of steady states of aggregation-diffusion equations}

\date{}
\author{Matias G. Delgadino$^1$, Xukai Yan$^2$, and Yao Yao$^3$}
\thanks{$^1$Department of Mathematics, Pontifical Catholic University of Rio de Janeiro, Rio de Janeiro, RJ 22451-900 Brazil. Email: matias.delgadino@puc-rio.br. MGD was partially supported by EPSRC grant number EP/P031587/1.}
\thanks{$^2$School of Mathematics, Georgia Institute of Technology, 686 Cherry Street, Atlanta, GA 30332-0160 USA. E-mail: xukai.yan@math.gatech.edu}
\thanks{$^3$School of Mathematics, Georgia Institute of Technology, 686 Cherry Street, Atlanta, GA 30332-0160 USA. E-mail: yaoyao@math.gatech.edu. YY was partially supported by the NSF grant DMS-1715418 and DMS-1846745.}

\begin{abstract}
We consider a nonlocal aggregation equation with degenerate diffusion, which describes the mean-field limit of interacting particles driven by nonlocal interactions and localized repulsion. When the interaction potential is attractive, it is previously known that all steady states must be radially decreasing up to a translation, but uniqueness (for a given mass) within the radial class was open, except for some special interaction potentials. For general attractive potentials, we show that the uniqueness/non-uniqueness criteria are determined by the power of the degenerate diffusion, with the critical power being $m=2$. In the case $m\geq 2$, we show that for any attractive potential the steady state is unique for a fixed mass. In the case $1<m<2$, we construct examples of smooth attractive potentials, such that there are infinitely many radially decreasing steady states of the same mass. For the uniqueness proof, we develop a novel interpolation curve between two radially decreasing densities, and the key step is to show that the interaction energy is convex along this curve for any attractive interaction potential, which is of independent interest. 

\end{abstract}

\maketitle

\section{Introduction}
 In this paper, we study the uniqueness question of steady states of the aggregation equation with degenerate diffusion
\begin{equation}\label{eq:evolution}
    \partial_t \rho=\Delta\rho^m+\nabla\cdot(\rho\nabla( W*\rho)), \quad x \in \mathbb{R}^n, t\geq 0.
\end{equation}
Throughout this paper we assume $m>1$, and $W \in C^\infty(\mathbb{R}^n \setminus \{0\})$ is a radially symmetric \emph{attractive} potential, that is, $W'(r)>0$ for all $r>0$, where $r$ is the radial variable.

Equation \eqref{eq:evolution} arises as the mean-field limit of an interacting particle system driven by localized repulsion and pairwise nonlocal interaction \cite{O90,BodnarVelazquez2}, and it has been extensively studied in various contexts in math biology and physics. It appears in mathematical biology as a macroscopic model for collective animal behavior such as swarming \cite{MogilnerEdelstein, boi00, MogilnerEdelsteinBent, Morale2005, TopazBertozzi2, BURGER2007939}, and the assumption $m>1$ models the anti-overcrowding effect \cite{boi00, TopazBertozzi2, CC06}. In particular, when $W = \mathcal{N}$ is the attractive Newtonian potential in $\mathbb{R}^n$, \eqref{eq:evolution} is known as the Patlak-Keller-Segel model for chemotaxis, which describes the collective motion of cells attracted by a self-emitted chemical substance; see \cite{patlak, ks, Horstmann} and the references therein. The case $m=1$, $W=\mathcal{N}$ for dimensions $n=2,3$ is also called the Smoluchowski--Poisson system in gravitational physics \cite{CHAVANIS2004145, chavanis2010self}. Besides the above applications, \eqref{eq:evolution} also has applications in granular media \cite{BCP, cmcv-06} and material science \cite{PhysRevLett.95.226106}.

 Identifying the steady states of \eqref{eq:evolution} is a key step towards understanding the dynamics. The first step is existence of steady states, which has been settled by Lions' concentration-compactness principle and its variations. The next natural question is whether steady states are unique for a given mass. Recently, it was shown that for radial attractive potential $W$, all bounded steady states must be radially decreasing up to a translation. Thus to answer the uniqueness question, one only needs to focus on uniqueness within the class of radially decreasing densities. To the best of our knowledge, uniqueness results have been only established for the cases when either the interaction potential has some special homogeneity or convexity properties, or for the special diffusion power $m=2$.  Therefore, for a general attractive potential,  even though all steady states are known to be radially decreasing, it was an open question whether they are unique within this class for a given mass. The main point of this paper is to answer the uniqueness question in the positive for $m\ge2$ and in the negative for $1<m<2$.

\subsection{Previous results in the literature.}\label{subsec_review} In this subsection, we briefly summarize the previous results in the literature regarding the gradient flow structure of \eqref{eq:evolution}, as well as the existence, symmetry and uniqueness results on the steady states.

\noindent $\bullet$ \emph{Gradient flow structure.} 
 Equation \eqref{eq:evolution} has an associated \emph{free energy} functional  $\mathcal{E}[\rho]$, which plays an important role in the study of steady states and well-posedness of solutions. It is defined as
\begin{equation}\label{def_e}
\mathcal{E}[\rho] = \frac{1}{m-1}\int_{\mathbb{R}^n} \rho^m dx + \frac{1}{2} \int_{\mathbb{R}^n} \rho (W*\rho) dx =: \mathcal{S}[\rho] + \mathcal{I}[\rho],
\end{equation}
where the \emph{entropy} $\mathcal{S}[\rho]$ corresponds to the nonlinear diffusion term (where $\mathcal{S}[\rho]$ becomes $\int_{\mathbb{R}^n} \rho \log \rho dx$ when $m=1$), and the \emph{interaction energy} $\mathcal{I}[\rho]$ corresponds to the nonlocal interaction term in \eqref{eq:evolution}.
%

 In fact, $\mathcal{E}$ is more than just a Lyapunov functional of \eqref{eq:evolution}. It plays a crucial role in this equation, since \eqref{eq:evolution} has a formal gradient flow structure. Denote by $\mathcal{P}_2(\mathbb{R}^n)$ the set of probability measures with finite second moment. If $\rho_0 \in \mathcal{P}_2(\mathbb{R}^n)$ with $\mathcal{E}[\rho_0]<\infty$, then formally speaking, $\rho(t)$ is the gradient flow of $\mathcal{E}$ in $\mathcal{P}_2(\mathbb{R}^n)$ endowed with the 2-Wasserstein distance $d_2$, in the sense that
\[
\partial_t \rho(t) = -\nabla_{d_2} \mathcal{E}[\rho(t)],
\]
for a generalized notion of gradient $\nabla_{d_2}$ induced by the 2-Wasserstein metric. This observation was first made by Otto in the seminal paper \cite{Otto} for the porous medium equation, and later generalized to a large family of equations, which can include a drift potential \cite{JKO}, or a nonlocal interaction term \cite{cmcv-03, cmcv-06}. 
For a comprehensive presentation of the theory of gradient flows in Wasserstein metric spaces, see the books \cite{AGS, Villani}.

\noindent $\bullet$  \emph{Existence of steady states.}
 To show the existence of steady states to \eqref{eq:evolution}, a natural idea is to look for the global minimizer of $\mathcal{E}$. The standard method of proof is the direct method of calculus of variations. That is to say, taking a minimizing sequence, showing that up to a subsequence it converges and proving lower semicontinuity of the functional along this sequence. Of course, this strategy fails if the energy is not bounded below. If $W$ is singular near the origin in the form $W \sim \frac{|x|^k}{k}$ with $k\in (-n,0)$, the Hardy-Littlewood-Sobolev inequality gives that the energy is bounded below in the \emph{diffusion-dominated regime} $m>1-k/n$.
 
 The harder step is to show the compactness of the minimizing sequence, namely that the mass of the minimizing sequence does not escape to infinity. Let us formally consider the scaling of the energy functional under the dilation $\rho_\lambda(x) = \lambda^n \rho(\lambda x)$ for $0<\lambda\ll 1$.  For potentials with non-integrable decay $W \sim \frac{|x|^k}{k}$ with $k\in (-n,0)$ for $|x|\gg 1$, we notice the scalings 
$$
\mathcal{S}[\rho_\lambda] = \lambda^{(m-1)n} \mathcal{S}[\rho]\qquad\mbox{ and}\qquad \mathcal{I}[\rho_\lambda] \sim \lambda^{-k} \mathcal{I}[\rho].
$$
Heuristically, we can see that spreading is not beneficial in terms of the energy if $(m- 1)n>-k$. In this case, it can be shown that a global minimizer exists. The first rigorous proof was done by Lions' concentration-compactness principle \cite{lions, lions2} for the Newtonian potential in the diffusion dominated regime $m>2-2/n$, and the same proof also works for growing interaction potentials $\lim_{|x|\to\infty}W(x)=\infty$ for all $m>1$. This result was generalized in \cite{bedrossian1} to obtain existence of a global minimizer for interaction potentials with decay $W \sim \frac{|x|^k}{k}$ for $k\in(-n,0)$ in the regime $m>1-k/n$.

\noindent For integrable potentials with $\int_{\mathbb{R}^n} W dx < \infty$, the energy scales as follows for the dilation $\rho_\lambda$ with $\lambda\ll 1$ (see \cite[Section 2.3.1]{CCY} for a derivation):
\begin{equation}\label{eq:scalingintegrable}
\mathcal{E}[\rho_\lambda] =\lambda^{(m-1)n} \int_{\mathbb{R}^n} \frac{1}{m-1}\rho^m dx  + \frac{\lambda^n}{2} \left(\int_{\mathbb{R}^n} W dx\right) \int_{\mathbb{R}^n} \rho^2 dx + o(\lambda^n).    
\end{equation}
Comparing the two powers of $\lambda$ suggests that $m = 2$ is the critical power separating the cases where it is favorable ($m < 2$) versus unfavorable ($m > 2$) for the mass to spread to infinity. This is indeed reflected in the following rigorous results. For $m>2$, for any attractive potential $W$, there exists a global minimizer for any given mass \cite{bedrossian1, BFH14}.  At $m=2$, existence versus non-existence of the global minimizer depends on the value of $\int_{\mathbb{R}^n} W dx$. Namely, for any mass, a global minimizer exists if and only if $\int_{\mathbb{R}^n} W dx \in [-\infty, -2)$ \cite{bedrossian1, BDF}.
 For $1<m<2$, there is a global minimizer if $-\int_{\mathbb{R}^n} W dx$ is sufficiently large \cite{kaib}.

\noindent For potentials with growth $\lim_{|x|\to\infty} W(x)=+\infty$, there is a global minimizer for all $m>1$ for any given mass, see \cite{CCV15, CDP}. For $m=1$, \cite{CDP} identified a sharp condition on $W$ that distinguishes existence/non-existence of global minimizers, $W$ has to grow at least logarithmically at infinity to provide confinement of the mass.

\noindent In all the above cases, the global minimizer of (1.2) corresponds to a steady state to (1.1) in the sense of distributions.  In addition, the global minimizer must be radially decreasing due to Riesz's rearrangement inequality.

\noindent $\bullet$ \emph{Radial symmetry of steady states.} In the cases that a steady state is known to exist, it is natural to ask whether it must be radially symmetric (note that a steady state is not necessarily the global minimizer). For all radial attractive potential $W$ that are no more singular than the Newtonian potential near the origin, \cite{CHVY} showed that every bounded steady state must be radially decreasing up to a translation for all $m>0$. This result was later generalized to more singular Riesz potentials $\mathcal{W}_k := \frac{|x|^k}{k}$, $k\in (-n,2-n)$ in \cite{CHMV}. These radial symmetry results show that when the repulsion is modeled by \emph{local} (linear/nonlinear) diffusion, symmetry breaking cannot happen in steady states. As a contrast, when repulsion is modeled by \emph{nonlocal} interaction via an attractive-repulsive interaction potential, numerical and analytical evidence  \cite{Kolokolnikov,  BKSUV,  BalagueCarrilloLaurentRaoul_Dimensionality}  shows that steady states can develop non-radial patterns, despite the radial symmetry of the interaction potential.

\noindent $\bullet$  \emph{Uniqueness/non-uniqueness of steady states.} For a given mass (without loss of generality we set the mass be 1 in the rest of this paper, and let $\mathcal{P}(\mathbb{R}^n)$ denote the probability densities), uniqueness of steady states is only known in special cases. In fact, even the uniqueness of the global minimizer is only known in the following cases:

If $W(x)$ is convex, it is known that the gradient flow is a contraction in the 2-Wasserstein sense \cite{AGS, Villani}, leading to uniqueness of steady states. 

For the attractive Newtonian potential $\mathcal{N}$, uniqueness of steady states among radial functions was first shown by \cite{LY} in the diffusion-dominated regime $m>2-2/n$. This result was generalized to potentials of the form $W=\mathcal{N}*h$ for a radially decreasing function $h$ by \cite{KY}. For Riesz  potentials $\mathcal{W}_k:=\frac{|x|^k}{k}$ with $k\in(-n,n)$ (where $W_0$ is replaced by $\ln|x|$) in the diffusion-dominated regime $m>1-k/n$, uniqueness of steady states is obtained for dimension $n=1$ in \cite{CHMV}, and for $n\geq 1$ in the recent work \cite{CCH19}, where both proofs strongly rely on the homogeneity properties of Riesz potentials.

 For general attractive potentials, uniqueness was only known for the special case when the diffusion power is $m=2$, under the additional assumption $W\in C^1(\mathbb{R}^n)$. Note that when $m=2$, both terms in \eqref{eq:evolution} are quadratic in $\rho_s$. As a result, if $\rho_s \in \mathcal{P}(\mathbb{R}^n)\cap L^\infty(\mathbb{R}^n)$ is a steady state supported in $B(0,R)$, then it satisfies
\[
2\rho_s+W*\rho_s=C \quad \textrm{ in } B(0,R),
\] 
where the left hand side is a linear operator of $\rho_s$. Using the Krein-Rutman theorem, (where the linearity of the left hand side in $\rho_s$ is crucial), \cite{BDF} proved uniqueness of steady states for $m=2$, under the additional assumption $W\in C^1(\mathbb{R})$. This result was generalized by \cite{kaib} to $\mathbb{R}^n$.

To the best of our knowledge, among all radial attractive potentials, so far the only non-uniqueness example is when $W=C_{k,n}\mathcal{W}_k$ is a multiple of Riesz potential with $k\in(-n,n)$, and the equation is in the \emph{fair-competition case}  $m=1-k/n \in(0,2)$. In this case, it is known that there exists a unique $C_{k,n}>0$ such that \eqref{eq:evolution} has a one-parameter family of steady states in $\mathcal{P}(\mathbb{R}^n)$, where all of them are dilations of each other, and they can be characterized as optimizers of a variant of Hardy-Littlewood-Sobolev inequalities \cite{BCL,BRB,CCH17}.

\subsection{Our results}

Throughout this paper, we assume that $m> 1$, and $W$ satisfies the following assumptions. Note that these assumptions cover all Riesz potentials $\mathcal{W}_k=\frac{|x|^k}{k}$ with $k\in (-n, 1]$.

\smallskip

\noindent (W1) $W(x) \in C^\infty(\mathbb{R}^n\setminus\{0\})$ is radially symmetric, and $W$ is an attractive potential, i.e. $W'(r)>0$ for all $r>0$, where $r$ is the radial variable.

\noindent (W2) $W$ is no more singular than some locally integrable Riesz potential $\frac{|x|^{k}}{k}$ at the origin for some $k>-n$, in the following sense:  $W'(r) \leq C_w r^{k-1}$ for all $r\in(0,1)$ for some $k>-n$ and $C_w>0$.

\noindent (W3) There exists some $C_w > 0$ such that $W'(r) \leq C_w$ for all $r>1$.

\noindent (W4) Either $W(r)$ is bounded for $r\geq 1$, or there exists some $C_w>0$ such that for all $a,b\geq 0$ we have
\[
W_+(a+b) \leq C_w (1+W(1+a) + W(1+b)),
\]
where $W_+ := \max\{W, 0\}$.

Here the assumptions (W3) and (W4) control the growth of $W$ for $r>1$, and they are only to ensure that we can apply the previous results to show that all steady states are radially decreasing and compactly supported (see Lemma~\ref{lemma_stat} for the precise statement). Other than in Lemma~\ref{lemma_stat}, these two assumptions play no role in the uniqueness proof. We note that (W4) is usually referred as the doubling hypothesis.

Our first main result is the uniqueness result of steady states for  a general attractive potential for $m\geq 2$. For the precise definition of steady states, see Definition~\ref{def:gendef}.

\begin{theorem}\label{thm:uniqueness}
Assume $m\geq 2$, and $W$ satisfies \textup{(W1)}--\textup{(W4)}. Then \eqref{eq:evolution} has at most one steady state in $\mathcal{P}(\mathbb{R}^n) \cap L^\infty(\mathbb{R}^n)$ up to a translation.\end{theorem}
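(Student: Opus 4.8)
The plan is to exploit the variational characterization of steady states together with a convexity argument along a carefully chosen interpolation curve between two candidate steady states. Since $W$ satisfies (W1)--(W4), Lemma~\ref{lemma_stat} guarantees that any steady state $\rho_s \in \mathcal{P}(\mathbb{R}^n)\cap L^\infty(\mathbb{R}^n)$ is radially decreasing (up to translation) and compactly supported, and that it satisfies the Euler--Lagrange condition $\frac{m}{m-1}\rho_s^{m-1} + W*\rho_s = C_s$ on its support, with the left-hand side $\geq C_s$ everywhere. So, after translating, the problem reduces to: among radially decreasing, compactly supported probability densities, can two distinct ones both be critical points of $\mathcal{E}$?

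Suppose $\rho_0$ and $\rho_1$ are two such steady states. First I would introduce the interpolation curve $\rho_t$ advertised in the abstract --- a ``novel interpolation curve between two radially decreasing densities'' --- which is built not as the linear interpolant $(1-t)\rho_0 + t\rho_1$ but via the inverse distribution (quantile/layer-cake) functions of the two radial profiles, so that $\rho_t$ stays radially decreasing for all $t\in[0,1]$ and interpolates the ``heights'' of the level sets in a monotone way. The point of this construction is twofold: (i) along this curve the entropy $\mathcal{S}[\rho_t] = \frac{1}{m-1}\int \rho_t^m$ is convex in $t$ when $m\geq 2$ --- here is exactly where the restriction $m\geq 2$ enters, since convexity of $s\mapsto s^m$ composed with the (affine-in-$t$) interpolation of level-set heights needs $m\geq 2$ (linear interpolation of heights, rather than of masses, is what makes the $m=2$ threshold appear, matching the scaling heuristic \eqref{eq:scalingintegrable}); and (ii) the interaction energy $\mathcal{I}[\rho_t] = \frac12\int \rho_t (W*\rho_t)$ is also convex in $t$ for \emph{every} attractive $W$. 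Claim (ii) is the heart of the matter and, per the abstract, is the main independent contribution; I would prove it by writing $\mathcal{I}$ in terms of the level sets using the layer-cake representation $\rho_t = \int_0^\infty \mathbf{1}_{\{\rho_t > h\}}\, dh$, reducing $\mathcal{I}[\rho_t]$ to a double integral over heights $h_1,h_2$ of $\iint W(x-y)\,\mathbf{1}_{E_t(h_1)}(x)\mathbf{1}_{E_t(h_2)}(y)\,dx\,dy$, where $E_t(h)$ are nested balls whose radii interpolate monotonically; then reducing further, via the radial structure, to showing that for two families of concentric balls $B(0,R_0^{(t)})$ and $B(0,S_0^{(t)})$ with radii that are suitable monotone interpolations, the quantity $\int_{B(0,R)}\int_{B(0,S)} W(x-y)\,dx\,dy$ is jointly convex along the curve --- a one-dimensional computation that ultimately rests on $W' > 0$ (and, crucially, \emph{not} on any convexity of $W$ itself).

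Granting convexity of both $\mathcal{S}$ and $\mathcal{I}$ along $\rho_t$ (hence of $\mathcal{E}$), I would then argue as follows: since $\rho_0$ and $\rho_1$ are both steady states, the Euler--Lagrange conditions force the one-sided derivatives $\frac{d}{dt}\big|_{t=0^+}\mathcal{E}[\rho_t] \geq 0$ and $\frac{d}{dt}\big|_{t=1^-}\mathcal{E}[\rho_t] \leq 0$ (moving toward the other steady state cannot decrease the energy, because at a steady state the first variation $\frac{m}{m-1}\rho_s^{m-1}+W*\rho_s$ is constant on the support and larger off the support, and the interpolation curve only rearranges mass among superlevel sets in an admissible direction). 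Combined with convexity of $t\mapsto\mathcal{E}[\rho_t]$ on $[0,1]$, these two inequalities force $\mathcal{E}[\rho_t]$ to be constant, hence affine, hence the convexity is not strict; tracing back through the strictness in the entropy term (strict convexity of $s^m$ for $m\geq 2$ wherever $s>0$, which bites unless $\rho_0$ and $\rho_1$ have identical level-set structure) one concludes $\rho_0 = \rho_1$. The step I expect to be the genuine obstacle is establishing claim (ii), the convexity of $\mathcal{I}$ along the interpolation curve for all attractive $W$: it requires choosing the interpolation of the level-set radii just right so that the double-ball interaction integral becomes convex, and verifying this likely needs a delicate change of variables and an identity expressing $\partial_R \partial_S \int_{B_R}\int_{B_S} W$ in a manifestly sign-definite form using only $W'>0$. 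A secondary technical point is handling the singularity of $W$ at the origin allowed by (W2) and the limited regularity of $\rho_s$ (merely $L^\infty$), which I would deal with by an approximation/truncation argument, first proving everything for smooth compactly supported $W$ and mollified densities and then passing to the limit.
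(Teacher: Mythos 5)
Your proposal follows essentially the same route as the paper: you correctly identify the height-function interpolation curve (linear interpolation of the inverse distribution functions $h_t=(1-t)h_0+th_1$), the observation that $\mathcal{S}[\rho_t]$ is convex iff $m\geq 2$, the key fact that $\mathcal{I}[\rho_t]$ is convex for \emph{all} attractive $W$, and the closing argument by strict convexity plus endpoint first-order conditions. Your sketch of the interaction-energy convexity is also in the right spirit (reduce to double-ball integrals via the layer-cake representation), though the paper organizes it differently --- it decomposes $W$ into step functions $W_a=\mathbf{1}_{r\geq a}$, so convexity reduces to a case analysis of a single quantity $I(t;s_1,s_2)$; in dimension $n\geq 2$ this still requires a nontrivial computation (Lemma~\ref{lemI_1}, involving Heron's formula) which you flag as the obstacle without filling in.

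The genuine gap is in your endgame. You assert that the Euler--Lagrange condition ``forces'' $\frac{d}{dt}\big|_{t=0^+}\mathcal{E}[\rho_t]\geq 0$ because ``the interpolation curve only rearranges mass among superlevel sets in an admissible direction.'' This is not an argument. The height-interpolation is neither a linear perturbation nor a Wasserstein geodesic, so the stationarity condition \eqref{critp} does not immediately yield a sign for the one-sided derivative of $\mathcal{E}$ along $\rho_t$. Two things must be established and neither is automatic. First, one must show $\mathcal{E}[\rho_t]$ is actually one-sided differentiable at $t=0$ and that its derivative can be expressed in terms of the Euler--Lagrange quantity $\frac{m}{m-1}\rho_0^{m-1}+W*\rho_0$; this requires representing the curve via a continuity equation $\partial_t\rho_t=-\nabla\cdot(V\rho_t)$ with an explicit velocity $V$ satisfying $|V(x,t)|\lesssim|x|$ (Proposition~\ref{prop_lipschitz}), and then a further regularity statement $\partial_t\rho_t\in C_t([0,1];\mathcal{M})$ (Lemma~\ref{lem:furtherregularity}), for which the paper needs the non-degeneracy $\Delta\rho_s(0)<0$ (Lemma~\ref{prop:regularity}) to control the blow-up of $h_t'(s)$ as $s\to 1^-$. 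Your proposal never mentions this singularity at the top of the density, which is the genuine hard point of the endpoint analysis --- not the singularity of $W$ at the origin, which you do mention. Second, even once the derivative formula is in hand, obtaining the sign requires knowing that $\partial_t\rho_t\big|_{t=0^+}$ is effectively supported in $\supp\rho_0$; the paper achieves this through the structure of the velocity field and in fact proves the derivative is exactly $0$ by a change of variables that makes the $\mathcal{S}$- and $\mathcal{I}$-contributions cancel (Proposition~\ref{prop:general}), rather than relying on the weaker sign inequality you propose. Your weaker ``$\geq 0$'' version would suffice for the contradiction, but as written you have not shown why it holds for this particular, non-standard curve.

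The paper also offers a shortcut you do not mention: when the gradient flow theory is available ($\lambda$- or $\omega$-convex $W$, or Newtonian), one uses the Lipschitz-in-$d_2$ property of the curve together with the vanishing metric slope $|\partial\mathcal{E}[\rho_s]|=0$ (from the EDI) to read off the endpoint derivatives directly (Theorem~\ref{thm_gf}), avoiding the fine regularity analysis.
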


\noindent\textbf{Remark.} The result and the proof of Theorem~\ref{thm:uniqueness} can be directly generalized to nonlinear diffusion operators of the form $\nabla\cdot (\rho \nabla \Phi'(\rho))$, where $\Phi:\R^+\to \R^+$ is a strictly increasing smooth function with $\Phi'''\geq0$.

Our second main result is the non-uniqueness result for $1<m<2$. Since it is known that steady states are unique for all $m\geq 1$ for certain potentials (such as convex potentials, or Riesz potentials in the diffusion-dominated regime), it is impossible to obtain a non-uniqueness result for all attractive potentials. That being said, we can still show that the non-uniqueness result for $1<m<2$ is rather generic, in the following sense. Given any attractive potential $W_0$ in the diffusion dominated regime (for technical reasons, we also assume it is no more singular than the Newtonian potential at the origin), and an arbitrarily large $R_0>0$, we can always modify the the tail of $W_0$ in $B(0,R_0)^c$ into a new attractive potential $\widetilde W$, such that \eqref{eq:evolution} has infinitely many radially decreasing steady states in $\mathcal{P}(\mathbb{R}^n) \cap L^\infty(\mathbb{R}^n)$ after the modification.

\begin{theorem}\label{thm:nonuniqueness}
Assume $m\in (1,2)$, $W_0$ satisfies \textup{(W1)} and \textup{(W2)} with $k>-n(m-1)$ and $k\geq -n+2$. Then for any $R_0>0$, there exists an attractive potential $\widetilde W \in C^\infty(\mathbb{R}^n\setminus \{0\}) \cap W^{1,\infty}(B(0,R_0)^c)$ that is identical to $W_0$ in $B(0,R_0)$, such that \eqref{eq:evolution} with potential $\widetilde W$ has infinitely many radially decreasing steady states in $\mathcal{P}(\mathbb{R}^n) \cap L^\infty(\mathbb{R}^n)$.
\end{theorem}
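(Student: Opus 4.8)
The plan is to exploit the structure of the steady state equation. By Lemma~\ref{lemma_stat}, any steady state $\rho$ is radially decreasing, compactly supported, and satisfies, on its support, the Euler--Lagrange relation
\[
\frac{m}{m-1}\rho^{m-1} + W*\rho = \text{const},
\]
so that $\rho^{m-1} = \frac{m-1}{m}\bigl(C - W*\rho\bigr)_+$. For a radially decreasing profile supported on $B(0,\mathsf{R})$, this becomes a nonlinear integral equation with two free parameters (the constant $C$ and the radius $\mathsf{R}$), tied together by the mass constraint $\int\rho = 1$. The idea is to first solve a one-parameter family of such profiles, indexed by the support radius $\mathsf{R}$, \emph{allowing the mass to be arbitrary}; call the resulting mass $M(\mathsf{R})$. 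If we can arrange that the map $\mathsf{R}\mapsto M(\mathsf{R})$ is \emph{not injective}, i.e. there is an interval of values of $M$ each attained by at least two distinct radii $\mathsf{R}$, then after rescaling back to unit mass we obtain infinitely many distinct steady states. The key degree of freedom is that we may prescribe $W_0$ only inside $B(0,R_0)$; outside we are free to design the tail of $\widetilde W$ so as to make $M(\mathsf{R})$ oscillate.

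First I would set up the existence and continuity theory for these radial profiles. For each $\mathsf{R} > 0$ consider the problem of finding a radially decreasing $\rho_{\mathsf R}$ supported on $B(0,\mathsf R)$ with $\rho_{\mathsf R}^{m-1} = \frac{m-1}{m}(C_{\mathsf R} - \widetilde W*\rho_{\mathsf R})_+$ and such that $\widetilde W * \rho_{\mathsf R}$ equals $C_{\mathsf R}$ exactly on the boundary $\partial B(0,\mathsf R)$; the condition $1<m<2$ makes $\rho_{\mathsf R} = \frac{m-1}{m}\cdot(\cdots)^{1/(m-1)}$ a \emph{superlinear} (Hölder, exponent $1/(m-1) > 1$) function of the bracket, which is exactly what prevents the Krein--Rutman uniqueness argument used for $m=2$ and leaves room for multiplicity. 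Existence of such a profile for each $\mathsf{R}$ follows from a fixed-point/variational argument in the spirit of the existence results cited in the introduction (restricting the variational problem to densities supported in $B(0,\mathsf R)$); the hypotheses $k > -n(m-1)$ and $k \geq -n+2$ on $W_0$ near the origin guarantee, respectively, that the energy is well-behaved (diffusion-dominated) and that $\widetilde W * \rho$ has enough regularity (the second condition is the ``no more singular than Newtonian'' requirement) to run the construction and to control the profile near $r=0$. I would also establish continuous dependence of $\rho_{\mathsf R}$, hence of $M(\mathsf R) = \int \rho_{\mathsf R}$, on $\mathsf R$.

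Next comes the design of the tail. Writing $W * \rho = W_0 * \rho$ is valid as long as $\mathsf R$ is small enough that $B(0,\mathsf R)$ and the relevant convolution stays inside $B(0,R_0)$; there $M(\mathsf R)$ is some fixed increasing function determined by $W_0$. The freedom is for larger $\mathsf R$: by choosing the tail of $\widetilde W$ on $B(0,R_0)^c$ to be, say, very flat (nearly constant) on some radial annulus and then steep on the next, one can make spreading the mass outward alternately cheap and expensive, so that $M(\mathsf R)$ first increases, then decreases, then increases again — producing the required non-monotonicity. Concretely I would pick target values $\mathsf R_1 < \mathsf R_2 < \mathsf R_3$ and engineer $\widetilde W'$ on $[R_0,\infty)$ so that $M(\mathsf R_1) < M(\mathsf R_3) < M(\mathsf R_2)$ (with $M$ continuous), which by the intermediate value theorem forces every mass level in the nonempty open interval $(M(\mathsf R_1), M(\mathsf R_3))$ to be realized by at least two distinct radii in $(\mathsf R_1, \mathsf R_3)$, and in fact — by iterating the construction with infinitely many oscillations of shrinking amplitude accumulating at a fixed mass value — to be realized by infinitely many. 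Rescaling the whole family to unit mass (using $\rho \mapsto \lambda^n \rho(\lambda\cdot)$, which rescales $\mathsf R$ and $C$ but preserves the class of steady states \emph{provided} the tail is adjusted, or more cleanly: first fix the desired unit-mass profiles and only afterwards read off what $\widetilde W$ must be) then yields infinitely many radially decreasing steady states of mass $1$ for the single potential $\widetilde W$. Throughout one must check that $\widetilde W$ so constructed is genuinely \emph{attractive}, i.e. $\widetilde W' > 0$ everywhere, which constrains the oscillation to be in the \emph{slope magnitude} of $\widetilde W'$ rather than its sign; arranging non-monotone $M(\mathsf R)$ under the one-sided constraint $\widetilde W' > 0$ is the delicate point, and I expect it to be the main obstacle — one likely needs $\widetilde W'$ to decay almost to zero on long annuli (making far-field interaction negligible so diffusion wins and mass prefers to spread) alternating with mild humps, and to quantify this carefully via the scaling heuristic \eqref{eq:scalingintegrable} adapted to the annular tail.
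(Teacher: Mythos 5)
Your proposal takes a genuinely different route from the paper's. The paper works \emph{dynamically}: starting from a steady state $\rho_s$ of $W_0$ supported in $B(0,R)$, it modifies $W$ outside $B(0,2R)$ so that it becomes nearly constant on an annulus and then grows linearly with small slope $\epsilon>0$ at infinity. Using the fact that for $m\in(1,2)$ the diffusion term dominates the flattened tail (quantified via the time evolution of $\|\rho(t)\|_{3-m}$, Lemma~\ref{lem_1}), it shows that flat initial data stays flat, while the mild linear growth of the tail keeps the first moment bounded; an energy-dissipation/compactness argument then produces a second, flatter steady state, and iteration gives infinitely many. Your proposal, by contrast, is a \emph{shooting/IVT} argument: parametrize candidate profiles by their support radius $\mathsf R$, record the mass $M(\mathsf R)$, and design the tail to make $\mathsf R\mapsto M(\mathsf R)$ oscillate through the value $1$. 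These are truly different mechanisms, and the shooting idea is natural. However, there are gaps in your argument that go beyond the one you flag.

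First and most seriously, the rescaling step does not work as stated. For a generic non-homogeneous potential, the map $\rho\mapsto\lambda^n\rho(\lambda\cdot)$ does \emph{not} send a steady state to a steady state: the Euler--Lagrange relation $\frac{m}{m-1}\rho^{m-1}+W*\rho=C$ is not scale-invariant unless $W$ is a Riesz power, because $W*\rho$ rescales by sampling $W$ on a dilated domain while $\rho^{m-1}$ rescales by a pure power of $\lambda$. So you cannot ``rescale back to unit mass'' after finding non-injectivity of $M$. The only usable version of your idea is to make $M(\mathsf R)$ hit the value $1$ at many distinct radii, all for the \emph{same} potential $\widetilde W$; but your phrase ``first fix the desired unit-mass profiles and only afterwards read off what $\widetilde W$ must be'' asks for one $\widetilde W$ consistent with an infinite family of prescribed profiles simultaneously, and that is precisely what you have not shown is possible. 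Second, you assume that for each $\mathsf R$ there is a (unique, continuously varying) profile with support exactly $B(0,\mathsf R)$ solving the Euler--Lagrange relation with $C=(W*\rho)(\mathsf R)$, and hence a well-defined continuous $M(\mathsf R)$. This is a nonlinear, nonlocal fixed-point problem at each $\mathsf R$: a constrained variational minimizer over $B(0,\mathsf R)$ need not saturate the boundary, and existence, uniqueness, and continuous dependence of the shooting profile are all nontrivial and not established. Third — and this you do acknowledge — making $M(\mathsf R)$ non-monotone under the constraint $\widetilde W'>0$ while maintaining the previous items across nested annuli is a delicate consistency problem (modifying the tail at a given scale perturbs $M(\mathsf R)$ for all larger $\mathsf R$ in a nonlinear way). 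As written, these issues leave a substantial gap; the paper's dynamical argument, tracking the $L^{3-m}$ norm to manufacture a flatter steady state and iterating, sidesteps all of them.
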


In particular, if we let $W_0$ be a smooth attractive potential (such as $|x|^2$) in the above theorem, it immediately leads to the following corollary.

\begin{corollary}
For any $m\in (1,2)$, there exists a smooth attractive potential $\widetilde W \in C^\infty(\mathbb{R}^n) \cap W^{1,\infty}(\mathbb{R}^n)$, such that \eqref{eq:evolution} has infinitely many radially decreasing steady states in $\mathcal{P}(\mathbb{R}^n) \cap L^\infty(\mathbb{R}^n)$.
\end{corollary}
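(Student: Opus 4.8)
The plan is to deduce the corollary directly from Theorem~\ref{thm:nonuniqueness} by choosing $W_0$ to be a globally smooth potential and then observing that the modified potential $\widetilde W$ produced by the theorem inherits both global smoothness and $W^{1,\infty}$ regularity.

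First I would take $W_0(x) = |x|^2$. Then $W_0 \in C^\infty(\mathbb{R}^n)$ and $W_0'(r) = 2r > 0$ for all $r>0$, so \textup{(W1)} holds. For \textup{(W2)}, writing $W_0'(r) = 2r = 2 r^{k-1}$ with $k=2$ shows that $W_0'(r) \le C_w r^{k-1}$ on $(0,1)$ with $k=2$ and $C_w=2$. It remains to check that $k=2$ lies in the admissible range of Theorem~\ref{thm:nonuniqueness}, i.e. $k > -n(m-1)$ and $k \ge -n+2$. Since $m\in(1,2)$ gives $n(m-1)>0$, we have $-n(m-1) < 0 < 2 = k$, and $-n+2 \le 2 = k$ for every $n\ge 1$. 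Hence $W_0=|x|^2$ satisfies the hypotheses of Theorem~\ref{thm:nonuniqueness}.

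Next I would apply Theorem~\ref{thm:nonuniqueness} with, say, $R_0=1$, obtaining an attractive potential $\widetilde W \in C^\infty(\mathbb{R}^n\setminus\{0\}) \cap W^{1,\infty}(B(0,1)^c)$ that coincides with $W_0 = |x|^2$ on $B(0,1)$ and for which \eqref{eq:evolution} admits infinitely many radially decreasing steady states in $\mathcal{P}(\mathbb{R}^n)\cap L^\infty(\mathbb{R}^n)$. Finally I would upgrade the regularity: on $B(0,1)$ we have $\widetilde W = |x|^2$, which is $C^\infty$ across the origin, and on $\mathbb{R}^n\setminus\{0\}$ we already have $\widetilde W \in C^\infty$; since the two descriptions agree on the overlap $B(0,1)\setminus\{0\}$, we get $\widetilde W \in C^\infty(\mathbb{R}^n)$. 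Similarly, $\widetilde W$ is bounded with bounded gradient on $B(0,1)$ (being $|x|^2$ on a bounded set) and lies in $W^{1,\infty}(B(0,1)^c)$, so $\widetilde W \in W^{1,\infty}(\mathbb{R}^n)$. This produces the potential claimed in the corollary.

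There is essentially no obstacle here; the corollary is an immediate specialization of Theorem~\ref{thm:nonuniqueness}. The only points requiring (minor) care are verifying that the admissible $k$-range in the theorem is nonempty for a globally smooth $W_0$ — which is why the choice $|x|^2$ (corresponding to $k=2$) is convenient — and noting that agreement with a smooth potential on $B(0,R_0)$ promotes the local conclusions $\widetilde W \in C^\infty(\mathbb{R}^n\setminus\{0\})$ and $\widetilde W \in W^{1,\infty}(B(0,R_0)^c)$ to the global statements $\widetilde W \in C^\infty(\mathbb{R}^n) \cap W^{1,\infty}(\mathbb{R}^n)$.
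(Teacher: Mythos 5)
Your proposal matches the paper's intended argument: the paper's remark before the corollary explicitly suggests taking $W_0 = |x|^2$ in Theorem~\ref{thm:nonuniqueness}, and you carry out exactly that specialization, verifying the exponent conditions ($k=2 > -n(m-1)$ and $k=2 \ge -n+2$) and patching the local regularity conclusions of the theorem together with the smoothness of $W_0$ on $B(0,R_0)$ to get the global $C^\infty(\mathbb{R}^n)\cap W^{1,\infty}(\mathbb{R}^n)$ statement. This is correct and is the same route.
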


 We point out that our non-uniqueness result are only in the degenerate diffusion regime $m\in(1,2)$. Its proof strongly relies on the fact that the steady states are compactly supported for $m>1$, thus does not apply to the $m\in (0, 1]$ case. This leads to the following open question:

\noindent\textbf{Open problem.}  In the case of $m\in (0,1]$,  are radially symmetric steady states of \eqref{eq:evolution} unique in $\mathcal{P}(\mathbb{R}^n)$ for a general attractive potential $W$?

%

\subsection{Strategy of proof} We first describe our method of proof for the uniqueness result in Theorem~\ref{thm:uniqueness}.
 For equations with a gradient flow structure, steady states need to be critical points of the energy functional. If for any two critical points, one can construct a smooth curve connecting them such that the energy along this curve is strictly convex, then there cannot be more than one critical points. 
This natural but powerful idea was formulated into a general Banach framework in \cite{bonheure2018paths} and here we adapt it to the case of the Wasserstein metric. 

 Of course, when trying to apply this interpolation argument to \eqref{eq:evolution}, the main question is how to find an interpolation curve along which the energy is convex, if it exists at all. Since  \eqref{eq:evolution} is formally a gradient flow in $\mathcal{P}_2$ with 2-Wasserstein metric, a natural candidate is the geodesic of this space. 
However, in general $\mathcal{E}$ is not convex along a geodesic for non-convex $W$. Note that other common interpolations (such as linear interpolation) fail for \eqref{eq:evolution} with general attractive potentials as well. Convexity along linear interpolation coincides with the Fourier transform satisfying $\hat W(\xi)\geq 0$ for all $\xi \ne0$, see \cite{lopes}.

 The key idea of our approach is a novel interpolation curve between any two radially decreasing functions. Let us start with a heuristic explanation; the rigorous definition are postponed to Section~\ref{sec_2}. If $\rho_0$, $\rho_1$ are both step functions having $N$ horizontal layers with mass $1/N$ in each layer, then we define the interpolation curve $\rho_t$ by deforming each layer so that its height changes linearly, and meanwhile adjust the width so that the mass in each layer remains constant. Figure~\ref{fig_steps} illustrates the interpolation for step functions with two layers. We can similarly define such interpolation between any two radially decreasing functions in $\mathcal{P}(\mathbb{R}^n)$, which can be seen as a $N\to\infty$ limit of the step function case.

\begin{figure}[h!]
\begin{center}
\includegraphics[scale=1]{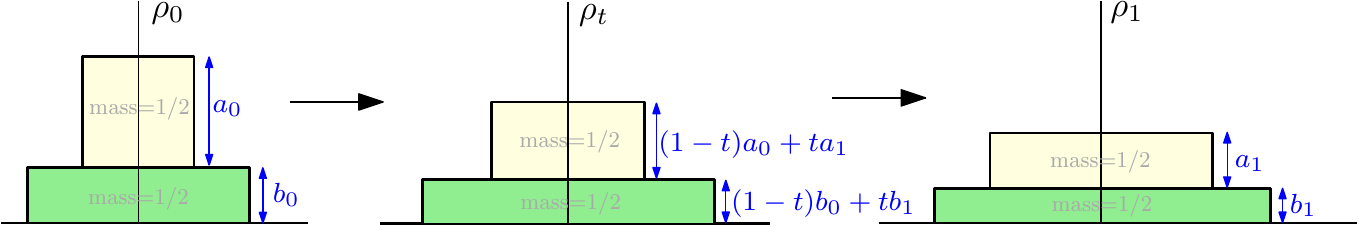}
\end{center}
\caption{A brief sketch of the interpolation curve $\{\rho_t\}_{t\in[0,1]}$ when $\rho_0$, $\rho_1$ are both step functions with two layers, where the mass of each layer is $1/2$.\label{fig_steps}}
\end{figure}

 Note that this interpolation curve $\rho_t$ is not the linear interpolation between $\rho_0$ and $\rho_1$, and it is not the geodesic in 2-Wasserstein metric either. We are unaware of any previous results using such an interpolation. While the interpolation may a-priori seem unnatural, it enjoys a remarkable property that the interaction energy $\mathcal{I}[\rho_t]$ is strictly convex along this curve for \emph{all attractive potentials} satisfying (W1) and (W2). (No convexity assumption on the potential is needed.) Regarding the entropy $\mathcal{S}[\rho_t]$, a simple argument gives that it is convex along this curve if and only if $m\geq 2$. In addition, we will show the curve is Lipschitz in 2-Wasserstein distance. Thus the convexity of $\mathcal{E}[\rho_t]$ for $m\geq 2$ leads to the uniqueness of steady states. Here the proof is shorter if \eqref{eq:evolution} has a rigorous gradient flow structure, which we describe in Section~\ref{sec_flow}. Without a rigorous gradient flow structure, we can still obtain uniqueness via a longer approach in Section~\ref{sec_general}, where we need to establish some fine regularity properties of the curve in Section~\ref{sec:further_reg}.

In the case $m\in(1,2)$, the entropy $\mathcal{S}[\rho_t]$ fails to be convex under the interpolation curve described above, thus our uniqueness proof does not apply. In fact, as shown by Theorem~\ref{thm:nonuniqueness}, $m=2$ is indeed the threshold separating uniqueness/non-uniqueness of steady states. To obtain non-uniqueness for $m\in(1,2)$, we take a potential $W_1$ with a steady state $\rho_s^1$ supported in some ball $B(0,R_1)$ and we modify its tail so that the new potential is still attractive and it also admits a new steady state.

In particular, the new attractive potential $W_2$ we construct is identical to $W_1$ in $B(0,2R_1)$, so that $\rho_s^1$ remains a steady state. To obtain another steady state, we set $W_2'(r)\equiv \epsilon$ for $r>3R_1$, where $0<\epsilon\ll 1$ is a sufficiently small constant. Note that \emph{if} we had set $\epsilon=0$, we would have $W_2\equiv \text{const}$ for $|x|>3R_1$, thus $W_2$ becomes an integrable potential after subtracting a constant. For integrable potentials, the scaling limit \eqref{eq:scalingintegrable} suggests that the long term dynamics of \eqref{eq:evolution} should be significantly different for the regimes $m\in(1,2)$ and $m\in(2,\infty)$. In the case $m\in(1,2)$, if the initial data is sufficiently flat it is energetically favorable for the solution to keep flattening. While in the case $m\in[2,\infty)$, it is not. We quantitatively study this phenomenon by tracking the evolution of the $L^{3-m}$ norm. However, instead of setting $\epsilon=0$, we set $0<\epsilon\ll 1$ so that a flat enough initial data remains flat for all time, but cannot spread to infinity due to the linear growth of $W_2$ as $|x|\to\infty$. For such $W_2$ with flat enough initial data, we show that along a diverging subsequence of time, the dynamical solution converges to a new steady state $\rho_s^2$ that is necessarily flatter than $\rho_s^1$. Finally, once we have that $W_2$ has two radially decreasing steady states, we can use an inductive argument to construct a potential with infinitely many steady states.

\subsection{Notations} \label{sec_notation}Throughout this paper, we use $\|f\|_p$ to denote the $L^p$ norm of $f$.
We denote by $c_n$ the volume of unit ball in $\mathbb{R}^n$, and $\omega_n$ the surface area of $(n-1)$-dimensional unit sphere in $\mathbb{R}^n$. Let $\mathcal{P}(\mathbb{R}^n)$ denote the probability densities in $\mathbb{R}^n$, and $\mathcal{P}_2(\mathbb{R}^n)$ denotes the probability densities  in $\mathbb{R}^n$ with finite second moment.

For a set $D \subset \mathbb{R}^n$, let $1_D(x)$ be the indicator function of $D$. We will often consider the special case where $D = B(0,r)$ is the ball centered at 0 with radius $r$, and to simplify the notation, we let $\chi_r(x) := 1_{B(0,r)}(x)$.

\section{Definition and properties of the interpolation curve}\label{sec_2}

\subsection{Definition of the height function}\label{sec_def_h}

As we explained in the introduction, the proof of the uniqueness result requires a novel interpolation using the ``height function with respect to mass''. Let $\rho\in \mathcal{P}(\R^n) \cap L^\infty(\R^n)$ be radially decreasing. (In Lemma~\ref{lemma_stat} we will show that all steady states belong to this class up to a translation.) For such $\rho$, we define its associated \emph{height function} $h(s):(0,1)\to (0,\|\rho\|_\infty)$ implicitly by
\begin{equation}\label{def_h_eq}
\int_{\R^n}\min\{\rho(x),h(s)\}\,dx=s.
\end{equation}
The definition of $h$ is illustrated in Figure~\ref{fig_h}(a): As we use a horizontal plane to cut the region below the graph of $\rho$, for any given $s\in (0,1)$, $h(s)$ is the (unique) height of the plane such that the mass below the plane is equal to $s$. See Figure~\ref{fig_h}(b) for a sketch of $h$. In the following lemma we prove some properties of $h$.

\begin{figure}[h!]
\begin{tikzpicture}
    \begin{scope}[xshift=-7cm]
    \node {\includegraphics[scale=0.9]{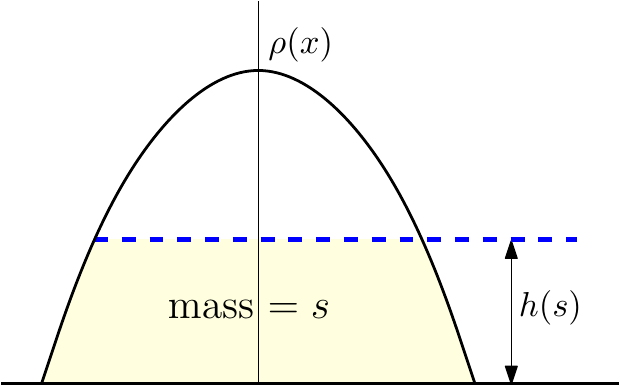}};
    \end{scope}
      \begin{scope}[xshift=-2.2cm, yshift=-0.13cm]
    \node {\includegraphics[scale=0.9]{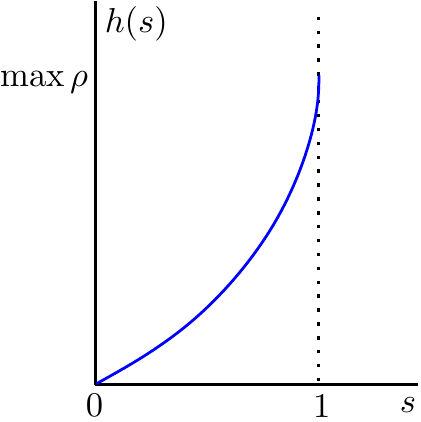}};
    \end{scope}
    \begin{scope}[xshift=3cm]
    \node {\includegraphics[scale=0.9]{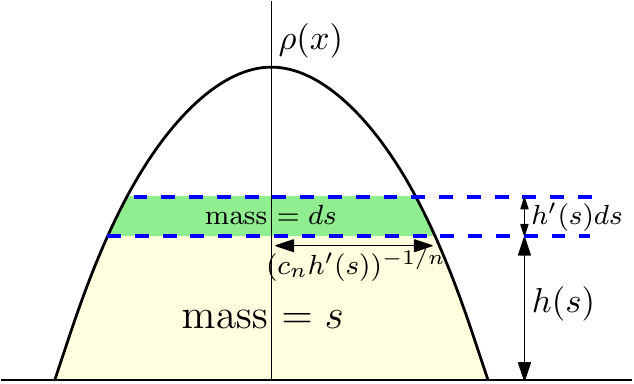}};
    \end{scope}
\end{tikzpicture}

\hspace*{-1cm}(a) \hspace{5cm} (b) \hspace{3.5cm} (c)
\caption{(a) An illustration of the definition of the height function $h$. (b) A sketch of the function $h(s)$. (c) A graphical explanation of the relation \eqref{h_rho}: note that the green region is an infinitesimally short cylinder with volume $ds$ and height $h'(s)ds$, thus its radius must be $(c_n h'(s))^{-1/n}$. \label{fig_h}}
\end{figure}

\begin{lemma}\label{lemma_h}
For a radially strictly decreasing probability density $\rho\in  \mathcal{P}(\mathbb{R}^n) \cap L^\infty(\R^n)$, let its height function $h$ be given by \eqref{def_h_eq}. Then $h$ satisfies the following properties.

\begin{enumerate}[(a)]
\item $h(s) \in (0,\|\rho\|_\infty)$ for $s\in (0,1)$, is continuous, strictly increasing, and convex.  In addition, we have
\begin{equation}\label{eq_h'}
h'(s)=|\{\rho>h(s)\}|^{-1} \text{ for a.e. }s\in(0,1).
\end{equation}

\item $\rho$ is compactly supported if and only if $\lim_{s\to 0^+}h'(s)>0$. In addition, we have $\lim_{s\to 0^+}h'(s)=|\supp \rho|^{-1}$. 
\item The function $h$ fully determines $\rho$, in the sense that
\begin{equation}\label{h_rho}
\rho(x)=\int_0^1\chi_{(c_n h'(s))^{-1/n}}(x) h'(s)\,ds \quad\text{ for a.e. }x\in\mathbb{R}^n,
\end{equation}
where $\chi_r(x) := 1_{B(0,r)}(x)$ is as defined in Section~\ref{sec_notation}.
(See Figure~\ref{fig_h}(c) for a graphical explanation of \eqref{h_rho}.)
\item If in addition we assume that $\rho$ is strictly decreasing in the radial variable within its support, then $h \in C^1((0,1))$, and
$$
\lim_{s\to 1^-}h'(s)=+\infty.
$$

\end{enumerate}

\end{lemma}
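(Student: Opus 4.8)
\textbf{Proof plan for Lemma~\ref{lemma_h}.}

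The plan is to establish the four properties essentially in the order stated, using the defining relation \eqref{def_h_eq} together with the layer-cake representation of $\rho$. Set $\mu(h) := \int_{\R^n}\min\{\rho(x),h\}\,dx$ for $h\in(0,\|\rho\|_\infty)$. First I would note that, by the layer-cake formula, $\mu(h) = \int_0^h |\{\rho > \tau\}|\,d\tau$, so $\mu$ is absolutely continuous, strictly increasing (since $\rho$ is radially strictly decreasing, $|\{\rho>\tau\}|>0$ for every $\tau<\|\rho\|_\infty$), with $\mu(0^+)=0$ and $\mu(\|\rho\|_\infty^-)=1$. Hence $\mu:(0,\|\rho\|_\infty)\to(0,1)$ is a continuous strictly increasing bijection, and $h := \mu^{-1}$ is well-defined, continuous, and strictly increasing; this gives the first assertions of part (a). For \eqref{eq_h'}, since $\mu'(h) = |\{\rho>h\}|$ for a.e.\ $h$ and $\mu$ is Lipschitz on compact subintervals with a strictly positive locally bounded-below derivative, $h=\mu^{-1}$ is locally Lipschitz and $h'(s) = 1/\mu'(h(s)) = |\{\rho > h(s)\}|^{-1}$ for a.e.\ $s$. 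For convexity: as $s$ increases, $h(s)$ increases, so $\{\rho > h(s)\}$ shrinks, so $|\{\rho>h(s)\}|^{-1}$ is nondecreasing in $s$; a function that is locally Lipschitz (hence locally absolutely continuous) with nondecreasing a.e.-derivative is convex. That closes part (a).

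For part (b), observe from \eqref{eq_h'} that $s\mapsto h'(s)$ is monotone nondecreasing, so $\lim_{s\to 0^+} h'(s)$ exists in $[0,\infty]$; and since $|\{\rho>h(s)\}|\to|\{\rho>0\}| = |\supp\rho|$ as $s\to 0^+$ by monotone convergence (using that $h(s)\to 0$), we get $\lim_{s\to0^+}h'(s) = |\supp\rho|^{-1}$, which is positive iff $|\supp\rho|<\infty$. Part (c) is the layer-cake identity read ``horizontally'': I would verify that for fixed $x$, the set of $s\in(0,1)$ for which $x\in B(0,(c_nh'(s))^{-1/n})$ is precisely $\{s : |\{\rho>h(s)\}| \le c_n|x|^n\}$, i.e.\ (using that $\{\rho>h(s)\}$ is a ball of volume $1/h'(s)$) the set $\{s: h(s)\le \rho(x)\} = (0,\mu(\rho(x))]$; integrating $h'(s)$ over this interval gives $h(\mu(\rho(x))) - h(0^+) = \rho(x)$ for a.e.\ $x$. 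One should handle the a.e.\ caveats (the formula $\{\rho>h(s)\}$ being a ball uses radial strict monotonicity on the support, and at the boundary of the support there may be a measure-zero exceptional set) with care but there is no real difficulty. Part (d): if $\rho$ is in addition strictly radially decreasing \emph{throughout} its support, then $\{\rho>h\}$ is an \emph{open} ball whose radius depends continuously and strictly monotonically on $h$, so $\mu'(h)=|\{\rho>h\}|$ is continuous and strictly positive on $(0,\|\rho\|_\infty)$; hence $h=\mu^{-1}\in C^1$ with $h'(s)=1/\mu'(h(s))$ everywhere, not just a.e. Finally, as $s\to1^-$ we have $h(s)\to\|\rho\|_\infty$, so $|\{\rho>h(s)\}|\to 0$ (strict radial monotonicity near the maximum forces the super-level sets to shrink to $\{0\}$), whence $h'(s)\to+\infty$.

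The step I expect to require the most care is part (c): one must be precise that $\{\rho > h(s)\}$ is a ball of volume $h'(s)^{-1}$ for \emph{a.e.} $s$ (this is exactly \eqref{eq_h'} together with radial symmetry), translate ``$x$ lies in this ball'' into an inequality on $s$, and then recognize the resulting $s$-integral of $h'$ as a telescoping integral recovering $\rho(x)$ — keeping track of the null sets in $x$ (boundary of $\supp\rho$, and the measure-zero set of radii where $\rho$ might fail to be strictly decreasing under the weaker hypothesis of part (c) versus part (d)). Parts (a), (b), (d) are then routine consequences of the elementary theory of monotone absolutely continuous functions and their inverses.
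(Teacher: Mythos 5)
Your proof is correct, and it takes a recognizably different (though closely related) route from the paper's. The paper works directly with one-sided difference quotients: it sandwiches $\frac{s(h+\delta)-s(h)}{\delta}$ between $|\{\rho>h+\delta\}|$ and $|\{\rho>h\}|$ to read off the right derivative $\frac{d^+}{dh}s=|\{\rho>h\}|$ and left derivative $\frac{d^-}{dh}s=|\{\rho\geq h\}|$; concavity then falls out of monotonicity of $|\{\rho>h\}|$, and part~(d) is deduced by observing that strict radial decrease forces the two one-sided derivatives to agree. You instead invoke the layer-cake identity $\mu(h)=\int_0^h|\{\rho>\tau\}|\,d\tau$ to get absolute continuity of $\mu$ outright, then use the calculus of inverses of locally bi-Lipschitz functions and Luzin's property~N to transport a.e.\ differentiability from $\mu$ to $h$, and recast part~(d) as ``$\mu'$ is continuous because level sets of a strictly decreasing radial function have measure zero, so the radius varies continuously.'' Both reasonings close all four parts; the paper's is a bit more hands-on and yields the explicit left/right derivative formulas that feed directly into the $C^1$ claim, while yours is a touch more abstract but arguably cleaner in part~(a). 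One cosmetic point worth fixing: in part~(c) the set you integrate over should be $\{s : h(s) < \rho(x)\}$ rather than $\{s : h(s)\le\rho(x)\}$ (since $x$ must lie in the \emph{open} ball $\{\rho>h(s)\}$), though this alters only a single endpoint and does not affect the integral; and the justification that $\int_0^{s_x}h'=h(s_x)-h(0^+)$ uses that $h$ is convex increasing with $h(0^+)=0$, which you should state when cleaning up the ``care about null sets'' caveat you flagged. The paper's own version of~(c) avoids this by starting from $\rho(x)=\int_0^{\|\rho\|_\infty}1_{\{\rho(x)>h\}}\,dh$ and changing variables $h=h(s)$, which is essentially your argument read in the opposite order.
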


\begin{proof}

 Given a fixed positive height $h>0$, the mass of $\rho$ under height $h$ is given by 
\begin{equation}\label{def_s_eq}
s(h):= \int_{\mathbb{R}^n}\min\{\rho(x),h\}\,dx.
\end{equation}
It follows that the function $s:(0,\infty)\to (0,1)$ is strictly increasing  in $h$ for $h \in (0,\|\rho\|_\infty)$, $s(0)=0$ and $\lim_{h\nearrow \|\rho\|_\infty}s(h)=1$. Note that for any $h>0, \delta>0$ with $h+\delta < \|\rho\|_\infty$, we have
\[
0 < |\{x\in\mathbb{R}^n: \rho>h+\delta\}| \leq \frac{s(h+\delta)-s(h)}{\delta} \leq |\{x\in\mathbb{R}^n: \rho(x)>h\}|
\]
Sending $\delta\to 0^+$ and using the upper bound, we have that $s(h)$ is continuous for $h \in (0,\|\rho\|_\infty)$. In addition, sending $\delta\to 0^+$ and using the fact that $\lim_{\delta\to0^+} |\{\rho>h+\delta\}|=|\{\rho>h\}|$, we have that the right derivative of $s$ satisfies
\begin{equation}\label{right_der}
\frac{d^+}{dh}s(h)=\lim_{\delta\to 0^+} \frac{s(h+\delta)-s(h)}{\delta} = |\{\rho>h\}| > 0 \qquad\mbox{for all $h \in (0, \|\rho\|_\infty)$}.
\end{equation}
Similarly, for the left derivative,
\begin{equation}\label{left_der}
 \frac{d^-}{dh}s(h)=\lim_{\delta\to 0^-} \frac{s(h+\delta)-s(h)}{\delta} = |\{\rho\geq h\}| > 0 \qquad\mbox{for all $h \in (0, \|\rho\|_\infty)$}.
\end{equation}
Hence, by the monotonicity of $|\{\rho> h\}|$, we obtain that $s$ is concave.

Comparing \eqref{def_s_eq} with \eqref{def_h_eq}, the function $h(s)$ in \eqref{def_h_eq} is the inverse of $s(h)$. Thus  $h$ is continuous, strictly increasing in $(0,1)$, convex, and satisfies $\lim_{s\to 0^+} h(s)=0$, $ \lim_{s\to 1^-} h(s)=\|\rho\|_{\infty}$.
In addition, \eqref{right_der} directly implies that the right derivative of $h$ satisfies 
\begin{equation}\label{eq_h_der}
\frac{d^+}{ds}h(s)=|\{\rho>h(s)\}|^{-1} \text{ for all }s\in(0,1).
\end{equation}
By monotonicity of $h$, we also know $h$ is differentiable a.e. in $(0,1)$, with $h'$ satisfying \eqref{eq_h'}.
 
To prove (c), we start with the identity 
\[
\rho(x) = \int_0^{\|\rho\|_\infty} 1_{\{\rho(x)>h\}} dh = \int_0^1 1_{\{\rho(x)>h(s)\}} h'(s) ds.
\]
Since $\rho$ is assumed to be radially decreasing, each level set $\{\rho>h\}$ is a ball centered at origin with radius $(c_n^{-1} |\{\rho>h\}|)^{-1/n}$, thus we have $\rho(x)>h$ if and only if $|x| < (c_n^{-1} |\{\rho>h\}|)^{1/n}$. Combining this with  \eqref{eq_h_der} gives 
\[
\rho(x) = \int_0^1 \chi_{ (c_n^{-1} |\{\rho>h(s)\}|)^{1/n}}(x) h'(s) ds = \int_0^1 \chi_{ (c_n h'(s))^{-1/n}}(x) h'(s) ds,
\]
finishing the proof of (c).

To show (d), note that if $\rho$ is strictly radially decreasing within its support, it implies $\{\rho>h\}=\{\rho\geq h\}$ for all $h>0$. Combining this fact with \eqref{right_der} and \eqref{left_der} gives that $s(h)\in C^1((0,\|h\|_\infty))$ with a strictly positive derivative, thus $h(s) \in C^1((0,1))$. In addition, $\rho_s$ being strictly radially decreasing near the origin implies that $\lim_{h\to\|\rho\|_\infty^-} |\{\rho>h\}| = 0$. Combining this with \eqref{eq_h_der} and the fact that $\lim_{s\to 1^-} h(s) = \|h\|_\infty$, we have $\lim_{s\to 1^-}h'(s)=\lim_{a\to \|h\|_\infty^-} |\{\rho>a\}|^{-1} = +\infty.$
%
%
%
%
%
\end{proof}

\subsection{Interpolation using the height function}
Let $\rho_0$ and $\rho_1$ be two radially symmetric and decreasing probability densities, with $h_0$ and $h_1$ as their associated height functions. We consider the curve $\{\rho_t\}_{t\in[0,1]}$ of radially symmetric decreasing probability densities, whose height function $h_t$ is a linear interpolation between $\rho_0$ and $\rho_1$:
\begin{equation}\label{h_t}
h_t(s):=(1-t)h_0(s)+th_1(s) \quad\text{ for } s\in(0,1), t\in[0,1],
\end{equation}
and $\rho_t$ is determined by its height function $h_t$ via the relation \eqref{h_rho}, that is,
\begin{equation}\label{rho_t}
\rho_t(x):=\int_0^1\chi_{(c_n h_t'(s))^{-1/n}}(x) h_t'(s)\,ds \quad\text{ for } t\in[0,1].
\end{equation}

Note that $\rho_t$ itself is not the linear interpolation of $\rho_0$ and $\rho_1$. In the next proposition, we will show that if $\rho_0, \rho_1$ are both continuous, compactly supported, and strictly radially decreasing within their supports (which is indeed the case for stationary solutions to \eqref{eq:evolution}), then $\{\rho_t\}_{t\in[0,1]}$ is a Lipschitz curve in $\mathcal{P}_2(\mathbb{R}^n)$ with respect to the 2-Wasserstein distance, although it is not a geodesic. In addition, the energy functional is continuous in $t$ for $t\in[0,1]$.
\begin{proposition}\label{prop_lipschitz}
Let $\rho_0, \rho_1\in \mathcal{P}(\R^n) \cap C(\mathbb{R}^n)$. Assume both of them are compactly supported, and strictly radially decreasing within the support of each of them. Consider the interpolation curve $\{\rho_t\}_{t\in[0,1]}$ defined by \eqref{h_t} and \eqref{rho_t}. Then $\{\rho_t\}_{t\in[0,1]}$ is a Lipschitz curve in $\mathcal{P}_2(\mathbb{R}^n)$ with respect to the 2-Wasserstein distance $d_2$.

In addition, if $m\geq 1$ and $W$ satisfies \textup{(W1)} and \textup{(W2)}, we have\begin{equation}\label{eq:continuous}
\mathcal{E}[\rho_t]\;\;\mbox{is continuous in $t$ for $t\in[0,1]$.}    
\end{equation}
\end{proposition}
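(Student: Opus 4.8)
The plan is to establish the Lipschitz bound and the energy continuity separately, using the explicit formula \eqref{rho_t} and the properties of $h_t$ coming from Lemma~\ref{lemma_h}. The first key observation is that, since $h_0, h_1$ are convex and (by Lemma~\ref{lemma_h}(d)) $C^1$ on $(0,1)$ with $h_i'$ bounded below by $|\supp \rho_i|^{-1}>0$ near $s=0$ and blowing up as $s\to 1^-$, the interpolant $h_t$ inherits all of these: $h_t$ is convex, $C^1$, with $h_t'(s) \geq (1-t)|\supp\rho_0|^{-1} + t|\supp\rho_1|^{-1} \geq c_* > 0$ on $(0,1)$ uniformly in $t$, and $h_t'(s)\to+\infty$ as $s\to1^-$. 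This uniform lower bound on $h_t'$ gives, via \eqref{rho_t}, a uniform bound $\|\rho_t\|_\infty \leq \sup_t h_t(1^-) = \max\{\|\rho_0\|_\infty,\|\rho_1\|_\infty\}$, and it also shows $\rho_t$ is supported in a fixed ball $B(0,R_*)$ with $R_* = (c_n c_*)^{-1/n}$, both uniformly in $t$. This uniform compact support and $L^\infty$ bound will be the workhorse for everything that follows.

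For the Lipschitz-in-$d_2$ claim, I would use the fact that a radially decreasing density is transported to another radially decreasing density of the same mass by the (unique) radial monotone rearrangement map, and that for such radial transport the $2$-Wasserstein distance can be computed from the inverse-cumulative (quantile) functions. Concretely, writing $\rho_t$ in terms of its height function, the ``radius as a function of mass'' is $r_t(s) := (c_n h_t'(s))^{-1/n}$... more precisely one should use the cumulative mass variable: for the radial profile, the radius enclosing mass $\sigma$ is some $R_t(\sigma)$, and $d_2(\rho_t,\rho_{t'})^2 \leq \int_0^1 |R_t(\sigma) - R_{t'}(\sigma)|^2\,d\sigma$ since this is the cost of the radial monotone coupling (an admissible, not necessarily optimal, plan). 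So it suffices to show $t \mapsto R_t(\sigma)$ is Lipschitz uniformly in $\sigma$. Here $R_t(\sigma)$ is determined by $\int_{B(0,R_t(\sigma))}\rho_t = \sigma$, equivalently (using \eqref{rho_t} and Fubini) by an explicit monotone relation between $R_t$, $\sigma$, and the function $s\mapsto h_t'(s)$; differentiating this relation in $t$ and using that $h_t' = (1-t)h_0' + t h_1'$ depends affinely on $t$, together with the uniform bounds $c_* \leq h_t'$ and the fixed support radius, one controls $\partial_t R_t(\sigma)$ by a constant depending only on $\rho_0,\rho_1$. I expect this computation to be the main obstacle: one has to be careful that $h_t'$ is unbounded above near $s=1$, so the estimate on $\partial_t R_t(\sigma)$ must be arranged to only see the (integrable) blow-up through the $\sigma$-to-$s$ change of variables; pushing the bound through near $\sigma=1$ (equivalently $R$ near $0$) is the delicate point, but the total mass constraint $\int_0^1 h_t'(s)^{1-n/n}\cdots$ — i.e. $\int \rho_t = 1$ — keeps things finite.

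For the energy continuity \eqref{eq:continuous}, I would split $\mathcal{E} = \mathcal{S} + \mathcal{I}$. For the entropy, $\mathcal{S}[\rho_t] = \frac{1}{m-1}\int \rho_t^m$, and since $\rho_t$ is uniformly bounded in $L^\infty$ with uniformly compact support, dominated convergence reduces this to showing $\rho_t \to \rho_{t_0}$ in, say, $L^1_{loc}$ (or pointwise a.e.) as $t\to t_0$; this follows from \eqref{rho_t} since $h_t' \to h_{t_0}'$ pointwise on $(0,1)$ (affine in $t$!) and the integrand $\chi_{(c_n h_t'(s))^{-1/n}}(x) h_t'(s)$ is dominated, for each fixed $x\ne 0$, by an integrable function of $s$ uniformly for $t$ near $t_0$ (again using $h_t' \geq c_*$ to cut off the $s$-range where $x$ lies in the ball). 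For the interaction energy, $\mathcal{I}[\rho_t] = \frac12\int\int W(x-y)\rho_t(x)\rho_t(y)\,dx\,dy$; with the uniform $L^\infty$ bound and uniform support, assumption (W2) makes $W$ locally integrable near $0$ (it is dominated by $|x|^k/k$ with $k>-n$), so the double integral is absolutely convergent uniformly in $t$, and continuity follows from $\rho_t \to \rho_{t_0}$ in $L^1$ together with the fact that $W * \rho_{t}$ converges uniformly on the fixed ball (convolution of an $L^1$-convergent sequence with a fixed $L^1_{loc}$ kernel, all inside a bounded set) — or more simply, by splitting $W = W\mathbf 1_{|z|<1} + W\mathbf 1_{|z|\geq 1}$, bounding the singular part by $\|\rho_t\|_\infty^2 \int_{|z|<1}|W| < \infty$ uniformly and applying dominated convergence to the (now bounded) kernel on the remaining region. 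The growth of $W$ for large $r$ plays no role because the supports are inside a fixed ball.
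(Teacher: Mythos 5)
Your Lipschitz argument takes a genuinely different route from the paper's. The paper works Eulerianly: it exhibits a velocity field $V(x,t)=v(|x|,t)\frac{x}{|x|}$ solving the continuity equation $\partial_t\rho_t+\nabla\cdot(V\rho_t)=0$, proves $|V(x,t)|\le C|x|$, and invokes Benamou--Brenier. You work Lagrangianly via the radial quantile function $R_t(\sigma)$, bounding $d_2(\rho_t,\rho_{t'})^2\le\int_0^1|R_t(\sigma)-R_{t'}(\sigma)|^2\,d\sigma$ and then estimating $\partial_t R_t(\sigma)$. These are two faces of the same coin: differentiating your defining relation $c_n R_t^n\,h_t(s_{R_t,t})+(1-s_{R_t,t})=\sigma$ yields $\partial_t R_t(\sigma)=-v(R_t(\sigma),t)$, so the estimate you need on $\partial_t R_t$ is literally the paper's bound \eqref{v_bd}. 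Your route is more self-contained (no appeal to Benamou--Brenier), at the cost of needing the well-known fact that the radial monotone map is admissible for radially decreasing densities.

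However, there is a real gap: you never actually prove the key bound. The quantity $|\partial_t R_t(\sigma)|=\frac{R_t(\sigma)\,|h_1-h_0|(s_{R_t,t})}{n\,h_t(s_{R_t,t})}$ is \emph{not} controlled by ``$c_*\le h_t'$ and the fixed support radius'' alone, as you assert; you must bound $\sup_{s,t}\frac{|h_1(s)-h_0(s)|}{h_t(s)}$. The delicate end is $s\to 0^+$, where numerator and denominator both vanish; there one needs L'H\^opital together with Lemma~\ref{lemma_h}(b) (compact support $\Leftrightarrow\lim_{s\to 0^+}h_i'(s)>0$) to get $\lim_{s\to 0^+}h_1(s)/h_0(s)=|\supp\rho_1|/|\supp\rho_0|<\infty$. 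Your appeal to a ``total mass constraint'' does not supply this — the constraint $\int\rho_t=1$ translates to the trivial identity $\int_0^1 h_t'(s)\cdot(h_t'(s))^{-1}\,ds=1$ and gives no information. There is also a small slip in the parenthetical: $\sigma\to 1$ corresponds to $R_t(\sigma)$ approaching the \emph{boundary} of the support (equivalently $s\to 0^+$), not to $R$ near $0$; $R$ near $0$ is $\sigma\to 0$, where $h_t$ is bounded away from zero and no difficulty arises.

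Your continuity argument for $\mathcal{E}[\rho_t]$ is correct and in fact more elementary than the paper's. The paper derives $\rho_t\in C_t([0,1];L^p)$ by combining Lipschitz-in-$d_2$ (hence weak continuity), a uniform $BV$ bound, and $BV$-in-$L^1$ compactness; you instead get pointwise a.e.\ convergence directly from the formula \eqref{rho_t} by dominated convergence in $s$ (with dominating function $(c_n|x|^n)^{-1}$, using that the indicator's boundary set is null because $h_{t_0}'$ is strictly increasing), then upgrade to $L^1$ by Scheff\'e and to $L^p$ by interpolation. Both routes are valid; yours avoids $BV$ compactness entirely. The treatment of $\mathcal{I}$ via splitting $W$ into a near-origin $L^1$ piece and a far piece that is bounded on the relevant compact set is fine, and you correctly observe that (W3)--(W4) play no role because all the $\rho_t$ live in a fixed ball.
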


The proof of Proposition~\ref{prop_lipschitz} will be postponed to Section~\ref{subsec_lipschitz}.
The main motivation for us to define this interpolation curve $\rho_t$ is that it turns out that the interaction energy $\mathcal{I}[\rho_t]$ is strictly convex along this curve for \emph{all attractive potentials}.  In addition, the entropy $\mathcal{S}[\rho_t]$ is convex if and only if $m\geq 2$. As a result, we have that $\mathcal{E}[\rho_t]$ is strictly convex along this curve for all attractive potentials when $m\geq 2$. The convexity of $\mathcal{S}[\rho_t]$ for $m\geq 2$ is rather straightforward to show, and we present the proof here.

\begin{proposition}\label{prop_convex_s}
Let $\rho_0, \rho_1 \in  \mathcal{P}(\mathbb{R}^n) \cap L^\infty(\R^n)$  be  both radially decreasing. Consider the interpolation curve $\{\rho_t\}_{t\in[0,1]}$ as given in \eqref{h_t} and \eqref{rho_t}. Then for $m\geq 2$ the function $t\mapsto \mathcal{S}[\rho_t]$ is convex for $t\in (0,1)$. 
\end{proposition}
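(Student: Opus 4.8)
The plan is to express the entropy $\mathcal{S}[\rho_t]$ directly in terms of the height function $h_t$ and then check convexity in $t$ pointwise under the integral sign. The key observation is the layer-cake identity from Lemma~\ref{lemma_h}: writing $\rho_t$ via \eqref{rho_t}, each "layer" at mass parameter $s$ is a cylinder of infinitesimal mass $ds$, height $h_t'(s)\,ds$ and radius $(c_n h_t'(s))^{-1/n}$, so the level set $\{\rho_t > h_t(s)\}$ has volume $|\{\rho_t>h_t(s)\}| = (c_n h_t'(s))^{-1}\cdot c_n = h_t'(s)^{-1}$ (consistent with \eqref{eq_h'}). Using the co-area/layer representation, I would rewrite
\[
\mathcal{S}[\rho_t] = \frac{1}{m-1}\int_{\R^n}\rho_t^m\,dx = \frac{1}{m-1}\int_0^1 \big( h_t(s)\big)^{m-1} \, ds \cdot (\text{something}),
\]
but more carefully: since $\rho_t(x) = \int_0^1 \mathbf 1_{\{|x|<r_t(s)\}} h_t'(s)\,ds$ where $r_t(s)=(c_n h_t'(s))^{-1/n}$, one has $\rho_t(x) = h_t(s)$ precisely on the annulus corresponding to parameter $s$, and the volume element of that annulus is $ds/h_t'(s)$. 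Therefore
\begin{equation}\label{eq:entropy_h}
\mathcal{S}[\rho_t] = \frac{1}{m-1}\int_0^1 h_t(s)^m \, \frac{ds}{h_t'(s)}.
\end{equation}
I would verify \eqref{eq:entropy_h} rigorously via the substitution $x \mapsto s$ given by $s = s_t(\rho_t(x))$ (the mass below height $\rho_t(x)$), under which $\int_{\R^n} F(\rho_t(x))\,dx = \int_0^1 F(h_t(s))\, h_t'(s)^{-1}\,ds$ for any Borel $F\ge 0$ — this is just Lemma~\ref{lemma_h}(a) combined with the radial monotonicity; taking $F(u)=u^m/(m-1)$ gives \eqref{eq:entropy_h}.

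Once \eqref{eq:entropy_h} is established, convexity in $t$ reduces to a pointwise statement: for each fixed $s$, I claim the map
\[
t \longmapsto \phi(t) := \frac{h_t(s)^m}{h_t'(s)}
\]
is convex, where $h_t(s) = (1-t)h_0(s) + t h_1(s)$ and $h_t'(s) = (1-t)h_0'(s) + t h_1'(s)$ are both affine in $t$ with positive values (for a.e.\ $s$). So the core lemma is purely algebraic: the function $(a,b)\mapsto a^m/b$ on $(0,\infty)^2$ is convex when $m\ge 2$. This follows by computing the Hessian: writing $g(a,b)=a^m b^{-1}$, one finds $g_{aa} = m(m-1)a^{m-2}b^{-1}$, $g_{bb} = 2a^m b^{-3}$, $g_{ab} = -m a^{m-1}b^{-2}$, so $g_{aa}g_{bb} - g_{ab}^2 = m(m-1)\cdot 2\, a^{2m-2}b^{-4} - m^2 a^{2m-2}b^{-4} = m(m-2)\,a^{2m-2}b^{-4} \ge 0$ exactly when $m\ge 2$, and $g_{aa}\ge 0$, $g_{bb}\ge 0$; hence $g$ is convex on $(0,\infty)^2$ for $m\ge2$. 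Precomposing the convex function $g$ with the affine map $t\mapsto (h_t(s), h_t'(s))$ preserves convexity, so $\phi$ is convex in $t$, and integrating \eqref{eq:entropy_h} over $s\in(0,1)$ against the positive measure $ds/(m-1)$ yields convexity of $t\mapsto \mathcal{S}[\rho_t]$.

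The main technical obstacle is not the convexity computation (which is the elementary Hessian check above) but justifying formula \eqref{eq:entropy_h} and the interchange of integration and the convexity inequality when $h_0, h_1$ are merely assumed radially decreasing and $L^\infty$ — in particular $h_t'$ need only exist almost everywhere, may vanish on a set (if $\rho_t$ is not strictly decreasing, i.e.\ has flat parts the level set volume is infinite... actually $h_t' = |\{\rho_t > h_t(s)\}|^{-1}$ which is finite since $\rho_t$ is a probability density, so $h_t' > 0$ a.e., which is fine), and the integral in \eqref{eq:entropy_h} may be $+\infty$. I would handle this by noting the pointwise convexity of $\phi(t)$ holds for a.e.\ $s$ regardless of integrability, so Jensen/midpoint convexity $\mathcal{S}[\rho_{(t_0+t_1)/2}] \le \tfrac12(\mathcal{S}[\rho_{t_0}] + \mathcal{S}[\rho_{t_1}])$ follows from the pointwise inequality integrated over $s$ via the monotone convergence theorem (or Fatou), with both sides allowed to be $+\infty$; combined with measurability/finiteness of $\mathcal{S}[\rho_t]$ on the relevant range (which for steady states follows from $\rho_t \in L^\infty$ with compact support), this gives genuine convexity on $(0,1)$. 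A secondary point to be careful about: one must confirm that the affine interpolant $h_t$ is indeed itself a valid height function of a radially decreasing probability density (so that \eqref{rho_t} and the substitution make sense) — this is exactly where $h_t$ convex increasing with $h_t'\ge 0$, $\int_0^1 \cdots$ normalized to mass $1$ is used, and it holds because these properties are preserved under convex combinations, as already implicit in the construction \eqref{h_t}–\eqref{rho_t}.
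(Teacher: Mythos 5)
Your key identity for $\mathcal{S}[\rho_t]$, namely $\mathcal{S}[\rho_t]=\frac{1}{m-1}\int_0^1 h_t(s)^m\,h_t'(s)^{-1}\,ds$, is not correct, and the error is in the Jacobian of your substitution $x\mapsto s$. The infinitesimal annulus on which $\rho_t$ takes values between $h_t(s)$ and $h_t(s+ds)$ has volume
\[
|\{\rho_t > h_t(s)\}| - |\{\rho_t > h_t(s+ds)\}| = \frac{1}{h_t'(s)} - \frac{1}{h_t'(s+ds)} \approx \frac{h_t''(s)}{h_t'(s)^2}\,ds,
\]
not $ds/h_t'(s)$: by \eqref{eq_h'}, the quantity $1/h_t'(s)$ is the volume of the entire super-level set $\{\rho_t>h_t(s)\}$, not of the annular shell between two nearby levels. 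As a sanity check, take the uniform density $\rho = c_n^{-1}R^{-n}\chi_R$, for which $h(s)=s/(c_n R^n)$ and $h'\equiv 1/(c_n R^n)$; your proposed formula with $F(u)=u$ would give $\int\rho\,dx=\int_0^1 s\,ds=\tfrac12$ instead of $1$. The correct identity, obtained from the layer-cake formula $\int_{\R^n}\Phi(\rho_t)\,dx=\int_0^\infty\Phi'(u)\,|\{\rho_t>u\}|\,du$ followed by the substitution $u=h_t(s)$, $du=h_t'(s)\,ds$ (so that the volume factor $|\{\rho_t>h_t(s)\}|=1/h_t'(s)$ cancels against $h_t'$), is
\[
\int_{\R^n}\Phi(\rho_t)\,dx = \int_0^1 \Phi'(h_t(s))\,ds, \qquad\text{so that}\qquad \mathcal{S}[\rho_t]=\frac{m}{m-1}\int_0^1 h_t(s)^{m-1}\,ds,
\]
which is the formula the paper uses.

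With the corrected formula the argument is actually simpler than your two-variable Hessian computation: $h_t(s)$ is affine in $t$, so the second $t$-derivative of the integrand is $m(m-2)\,h_t(s)^{m-3}(h_1(s)-h_0(s))^2$, nonnegative precisely when $m\ge2$. That the discriminant of your $g(a,b)=a^m/b$ also produced the factor $m(m-2)$ appears to be a coincidence of the algebra and does not rescue the argument, since the functional you were differentiating is not $\mathcal{S}[\rho_t]$. The care you take with a.e.\ differentiability of $h_t$, Fatou/monotone convergence for possible non-integrability, and validity of the affine $h_t$ as a height function is sound and would carry over unchanged to the corrected integrand.
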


\begin{proof}
Let $\Phi:\R^+\to \R^+$ be a strictly increasing function that is smooth in $(0,\infty)$, and satisfies  $\Phi'(a)>0$ for all $a>0$. Then we have
\begin{equation}\label{Internal Energy}
\begin{split}
\ds \int_{\R^n}\Phi(\rho_t(x))\,dx&= \ds\int_0^\infty |\{x \in \mathbb{R}^n: \Phi(\rho_t(x))\ge h\}|\,dh\\
&=\ds\int_0^\infty |\{x \in \mathbb{R}^n:  \rho_t(x)\ge \Phi^{-1}(y)\}|\,dy\\
&=\ds\int_0^1 |\{x: \rho_t(x)\ge h_t(s)\}|\, \Phi'(h_t(s))\, h_t'(s)\,ds\\
&=\int_0^1 \Phi'(h_t(s)) \,ds,
\end{split}
\end{equation}
where the third equality follows from the change of variable $y=\Phi(h_t(s))$, and the last equality is due to  \eqref{eq_h'}.

In general, the convexity of this integral in $t$ requires that $\Phi'$ being convex. To see this, taking the second derivative in $t$ of \eqref{Internal Energy}, and using the definition of $h_t(s)$ in \eqref{h_t}, we have
\[
\frac{d^2}{dt^2} \int_{\R^n}\Phi(\rho_t(x))\,dx = \int_0^1 \Phi^{(3)} (h_t(s)) \,(h_1(s)-h_0(s))^2 \,ds.
\]
For the case 
$$
\Phi_m(a):=\begin{cases}
\displaystyle\frac{a^{m}}{m-1}&\text{ for }m\ne 1,\\[0.2cm]
a\log a &\text{ for }m=1,
\end{cases}
$$
we have
\[
\frac{d^2}{dt^2} \mathcal{S}[\rho_t] = \frac{d^2}{dt^2} \int_{\R^n}\Phi_m(\rho_t(x))\,dx = m(m-2) \int_0^1 h_t(s)^{m-3} \,(h_1(s)-h_0(s))^2\, ds,
\]
which is zero for $m=2$, strictly positive for $m>2$, and strictly negative for $1\leq m<2$. This finishes the proof.
\end{proof}

Next we prove that the interaction energy is strictly convex along the interpolation curve in one dimension. The convexity proof in multi-dimension is computationally involved, thus we postpone it to Section~\ref{sec_convexity}. 

\begin{proposition}\label{prop_convex_1d}Assume $W$ satisfies \textup{(W1)} and \textup{(W2)}.
 Let $\rho_0, \rho_1\in \mathcal{P}(\R) \cap C(\mathbb{R})$ be two symmetric decreasing probability densities on $\mathbb{R}$ that are not identical. Consider the interpolation curve $\{\rho_t\}_{t\in[0,1]}$ as given in \eqref{h_t} and \eqref{rho_t}. Then the function $t\mapsto \mathcal{I}[\rho_t]$ is strictly convex for $t\in (0,1)$. 
\end{proposition}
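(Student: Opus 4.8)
The plan is to express the interaction energy $\mathcal{I}[\rho_t]$ explicitly in terms of the height function $h_t$ and the primitive $\widetilde W$ of $W'$, so that convexity in $t$ becomes a pointwise convexity statement about an integrand. First I would use the layer-cake representation \eqref{h_rho}, which in one dimension says $\rho_t = \int_0^1 \chi_{r_t(s)} h_t'(s)\,ds$ with half-width $r_t(s) := (2 h_t'(s))^{-1}$ (here $c_1 = 2$). Substituting this into $\mathcal{I}[\rho] = \tfrac12\int\int \rho(x)\rho(y) W(x-y)\,dx\,dy$ and using Fubini gives
\begin{equation}\label{I_double_layer}
\mathcal{I}[\rho_t] = \frac12 \int_0^1\!\!\int_0^1 h_t'(s) h_t'(\sigma)\, G\big(r_t(s), r_t(\sigma)\big)\,ds\,d\sigma,
\end{equation}
where $G(a,b) := \int_{-a}^a\!\int_{-b}^b W(x-y)\,dx\,dy$ is the interaction energy between two centered intervals of half-widths $a$ and $b$. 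The key reduction is that $h_t'(s) G(r_t(s), r_t(\sigma))$ and the symmetric term should be rewritten so that the $h_t'$ factors are absorbed: since $r_t(s) h_t'(s) = \tfrac12$ is constant, one can hope to recast \eqref{I_double_layer} purely in terms of the quantities $r_t(s)$, or better, in terms of $\ell_t(s) := r_t(s)$ viewed as functions of $s$. Introducing $\Omega(a,b) := \tfrac14\, a^{-1} b^{-1} G(a,b)$, the integral becomes $\mathcal I[\rho_t] = \tfrac12\int_0^1\int_0^1 \Omega(r_t(s), r_t(\sigma))\,ds\,d\sigma \cdot (\text{up to bookkeeping})$; the real content is then to show that $(s,\sigma)\mapsto$ the relevant integrand is convex as a function of $t$ given that $r_t(s)^{-1} = 2h_t'(s) = 2[(1-t)h_0'(s) + t h_1'(s)]$ is \emph{affine} in $t$ for each fixed $s$.

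So the heart of the matter is a two-variable convexity lemma: if $a(t) = (A_0(1-t) + A_1 t)^{-1}$ and $b(t) = (B_0(1-t) + B_1 t)^{-1}$ with all constants positive, I want $t \mapsto \Phi(a(t), b(t))$ to be convex, where $\Phi(a,b) = a^{-1}b^{-1} G(a,b) = a^{-1}b^{-1}\int_{-a}^a\int_{-b}^b W(x-y)\,dx\,dy$. Differentiating, $G_{ab}(a,b) = \partial_a\partial_b G = W(a-b) + W(a+b) + W(-a-b) + W(-a+b) = 2W(a+b) + 2W(a-b)$ using radial symmetry $W(-r) = W(r)$. The attractiveness $W'>0$ and the integrability/singularity control (W2) enter precisely here: they guarantee the needed monotonicity and integrability of $G$ and its derivatives so the differentiations under the integral sign are legitimate and the resulting boundary terms are controlled. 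I would compute $\tfrac{d^2}{dt^2}\Phi(a(t),b(t))$ via the chain rule, using that $\tfrac{d}{dt} a^{-1}$ is constant (so $a'' = 2(a')^2/a$, i.e. $a$ is convex with a clean second-derivative identity), and reduce the sign of the second derivative to showing that a certain explicit combination of $\Phi$ and its first and second partials — with coefficients that are squares or products of the (signed) quantities $A_1 - A_0$, $B_1 - B_0$ — is nonnegative, and strictly positive unless $\rho_0 \equiv \rho_1$.

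The main obstacle I expect is establishing that this explicit quadratic form in the partials of $\Phi$ has a sign for \emph{every} attractive $W$, with no convexity hypothesis on $W$ — this is exactly the ``remarkable property'' the introduction advertises, and it cannot follow from soft arguments. My strategy for this core estimate is to integrate by parts in the definition of $G$ to trade the pointwise values of $W$ for $W'$ (which is positive), writing things like $G_{ab}(a,b) = 2W(a+b) + 2W(a-b)$ and $G_a(a,b) = \int_{-b}^b [W(a-y) + W(-a-y)]\,dy$, and then expressing the key combination as an integral of $W'$ against a kernel that is manifestly nonnegative; the affine-in-$t$ structure of $1/r_t(s)$ is what makes this kernel work out. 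Throughout I would keep track of the strictness: since $\rho_0 \not\equiv \rho_1$, the functions $h_0, h_1$ differ on a positive-measure set, hence $A_1 - A_0 \neq 0$ for $s$ in a positive-measure set, which feeds a strictly positive contribution into $\tfrac{d^2}{dt^2}\mathcal{I}[\rho_t]$. Technical regularity points — differentiating \eqref{I_double_layer} under two integral signs, handling the possible singularity of $W$ at $0$ (which occurs on the diagonal $s = \sigma$ when $r_t(s) = r_t(\sigma)$), and the endpoint behavior as $s \to 0^+, 1^-$ where $h_t'$ may degenerate or blow up — will be dispatched using (W2), Lemma~\ref{lemma_h}(b),(d), and the compact-support/strict-monotonicity assumptions on $\rho_0,\rho_1$, presumably first for smooth compactly supported approximations and then by the continuity statement \eqref{eq:continuous} from Proposition~\ref{prop_lipschitz}.
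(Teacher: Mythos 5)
Your setup is right and tracks the paper's opening move: you use \eqref{h_rho} to write $\mathcal{I}[\rho_t]$ as a double integral over $(s,\sigma)\in(0,1)^2$ of an integrand depending only on $h_t'(s)$ and $h_t'(\sigma)$, and you correctly identify that the whole argument will hinge on the fact that $1/r_t(s) = 2h_t'(s)$ is \emph{affine} in $t$ for each fixed $s$. Your computation $G_{ab}(a,b) = 2W(a+b) + 2W(a-b)$ is also correct. But the proposal then stops at exactly the point where the proof has to actually happen: you write down that the sign of $\tfrac{d^2}{dt^2}\Phi(a(t),b(t))$ reduces to a quadratic form in $(A_1-A_0, B_1-B_0)$ with coefficients built from partials of $\Phi$, and you say you ``would'' show this form is nonnegative by integrating by parts to trade $W$ for $W'$ and exhibiting a manifestly nonnegative kernel --- but you never produce that kernel or verify its sign. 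That is not a technical loose end; it is the entire content of the proposition, and you yourself flag it as the part that ``cannot follow from soft arguments.''

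The paper closes exactly this gap by doing the $W\mapsto W'$ integration by parts \emph{first}, as a reduction: writing $W(r)=\int_0^\infty W'(a)\,W_a(r)\,da + w_0$ with $W_a = 1_{\{r\ge a\}}$ and $W'>0$, it suffices to prove convexity for each step potential $W_a$. For $W_a$, the integrand $I(t;s_1,s_2)$ becomes a purely geometric quantity --- the fraction of a $\tfrac{1}{f}\times\tfrac{1}{g}$ rectangle ($f=h_t'(s_1)$, $g=h_t'(s_2)$) lying outside a diagonal strip of half-width $a$ --- which is piecewise rational in $f,g$, with three cases. In the nontrivial case the second derivative in $t$ is an explicit quadratic form in $(f',g')$ whose discriminant factors as $4(a-\tfrac1{2f}+\tfrac1{2g})(a+\tfrac1{2f}-\tfrac1{2g})(a+\tfrac1{2f}+\tfrac1{2g})(a-\tfrac1{2f}-\tfrac1{2g}) < 0$ under the case constraints, so the form is positive definite. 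This discriminant computation is the ``manifestly nonnegative kernel'' your plan anticipates but does not deliver. If you carry out your chain-rule computation for general $W$, you will eventually be forced to a sign condition that is equivalent to this discriminant being negative; the paper's reduction to $W_a$ is what makes that condition checkable by elementary algebra and also sidesteps the regularity worries (singularity of $W$ at $0$, differentiation under the integral) that you list at the end, since each $W_a$ is bounded. To turn your proposal into a proof you need to either (i) perform the reduction to $W_a$ and then the rectangle/strip case analysis with the discriminant computation, or (ii) actually produce and verify the nonnegative kernel you invoke --- either way, the missing step is concrete algebra, not bookkeeping.
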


\begin{proof} Let us first prove the convexity of $t\mapsto \mathcal{I}[\rho_t]$, and we will upgrade it to strict convexity at the end of the proof. 
Note that the integral $\int_{\mathbb{R}} \rho (W*\rho) dx$ is linear with respect to $W$. Since $W$ is a radial function with $W'(r)>0$ for $r>0$, it suffices to prove convexity of $t\mapsto \mathcal{I}[\rho_t]$ for each interaction potential of the form
\begin{equation}\label{def_wa}
W_a(r) := \begin{cases}
0 & 0\leq r<a\\
1 & r\geq a,
\end{cases}
\end{equation}
where $a>0$.
To see this, note that once we prove the convexity of $\mathcal{I}[\rho_t]$ for each $W_a$, for any attractive interaction potential $W$ that is bounded below, we can express $W$ (in the radial variable $r$) as
\begin{equation}\label{eq:decompW}
W(r) = \int_0^\infty W'(a) W_a(r) da + w_0,
\end{equation} 
where $w_0 = \lim_{r\to 0^+}W(r)$. Convexity of the function $t\mapsto \mathcal{I}[\rho_t]$ immediately follows from the linearity of $\mathcal{I}$ in $W$ and the fact that $W'(a)>0$ for $a>0$. And if $W$ is unbounded below, we can set $W_\epsilon := \max\{W, -\epsilon^{-1}\}$, which is bounded for each $\epsilon>0$. Since the interaction energy with potential $W_\epsilon$ is convex for all $\epsilon>0$, sending $\epsilon\to 0$ gives the convexity of $t\mapsto \mathcal{I}[\rho_t]$.

From now on, we replace $W$ by $W_a$ in the definition of $\mathcal{I}$. Using \eqref{h_rho}, $\mathcal{I}[\rho_t]$ can be written as
\begin{equation*}
\begin{split}
\mathcal{I}[\rho_t] &= \frac{1}{2}\int_0^1 \int_0^1 h_t'(s_1) h_t'(s_2) \left|\Big\{(x,y): |x|\leq \frac{1}{2h_t'(s_1)}, |y|\leq \frac{1}{2h_t'(s_2)}, |x-y|\geq a\Big\}\right| ds_1 ds_2.
\end{split}
\end{equation*}
Let us denote the integrand by $I(t; s_1, s_2)$. In the rest of the proof, we aim to show that $t\mapsto I(t; s_1, s_2)$ is convex in $(0,1)$ for a.e. $s_1, s_2 \in (0,1)$, which would directly imply the convexity of $t\mapsto \mathcal{I}[\rho_t]$.

For any $s_1, s_2 \in (0,1)$, we denote by $R(t; s_1, s_2)$ the rectangle centered at (0,0), with width and height given by $\frac{1}{h_t'(s_1)}$ and $\frac{1}{h_t'(s_2)}$ respectively. Note that $I(t; s_1, s_2) \in [0,1)$ outputs the portion of the rectangle lying outside the diagonal stripe $S_a := \{(x,y)\in \mathbb{R}^2:|x-y|< a\}$, that is,
\[
I(t; s_1, s_2) = \frac{|R(t; s_1, s_2) \cap (S_a)^c|}{|R(t; s_1, s_2)|}.
\]

For every $t, s_1, s_2 \in (0,1)$, depending on the side lengths of $R(t; s_1, s_2)$,  exactly one of the following four cases can happen. See Figure \ref{fig_3cases} for an illustration of Cases 1--3.

Case 0. Some vertices of $R(t; s_1, s_2)$ fall on $\partial S_a$.

Case 1. All four vertices of $R(t; s_1, s_2)$ belong to $S_a$.

Case 2. All four vertices of $R(t; s_1, s_2)$ belong to $(\overline{S_a})^c$.

Case 3. Two vertices of $R(t; s_1, s_2)$ belong to $S_a$, and the other two are in $(\overline{S_a})^c$.

\begin{figure}[h!]
\begin{center}
\includegraphics[scale=0.9]{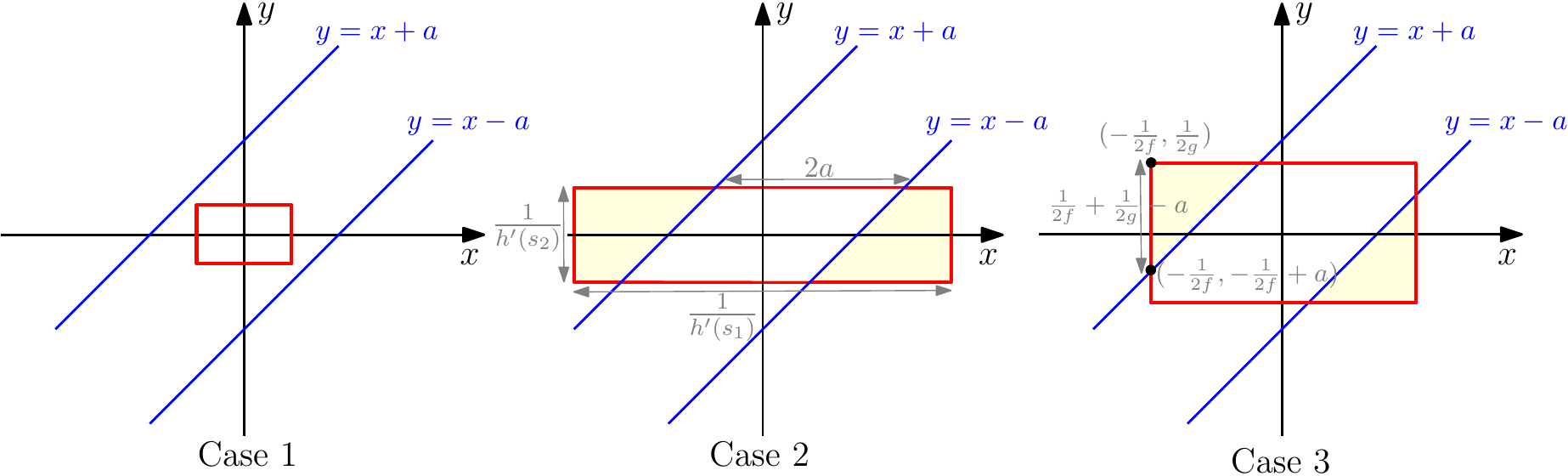}
\end{center}
\caption{Illustration of the three cases for the rectangle $R(t; s_1, s_2)$.\label{fig_3cases}}
\end{figure}

We start by pointing out that for every $t\in(0,1)$, Case 0 only happens for a zero-measure set of $(s_1, s_2) \in (0,1)^2$, therefore can be omitted. This is because for every $t\in(0,1)$, the function $s\mapsto h_t'(s)$ is strictly increasing for $s\in (0,1)$, which follows from Lemma~\ref{lemma_h}(a) and the fact that $\rho_0$, $\rho_1$ are both continuous. As a result, for every $t\in (0,1)$ and $s_1\in (0,1)$, there are at most two values of $s_2\in(0,1)$ such that Case 0 happens, which yields a zero-measure subset in $(0,1)^2$.

In the rest of the three cases, we aim to show that $\frac{\partial^2}{\partial t^2} I(t; s_1, s_2) \geq 0$. Case 1 is straightforward: first note that in this case we have $R(t; s_1, s_2) \subset S_a$, thus $I(t; s_1, s_2) = 0$. In addition, the continuity of the map $t\mapsto h_t'(s_i)$ for $i=1,2$ gives that $R(\tilde t; s_1, s_2) \subset S_a$ for all $\tilde t$ sufficiently close to $t$, leading to $\frac{\partial^2}{\partial t^2} I(t; s_1, s_2) = 0$.

In Case 2, without loss of generality we assume that $\frac{1}{h_t'(s_1)} \geq \frac{1}{h_t'(s_2)}$. (That is, the width is longer than height). A direct computation (see Figure \ref{fig_3cases} for an illustration) yields that
\[
|R(t; s_1, s_2) \cap (S_a)^c| = \frac{1}{h_t'(s_2)} \left( \frac{1}{h_t'(s_1)}-2a\right), 
\]
thus
\[
I(t; s_1, s_2) = \frac{|R(t; s_1, s_2) \cap (S_a)^c|}{|R(t; s_1, s_2)|} = 1-2ah_t'(s_1).
\]
Again, the continuity of the map $t\mapsto h_t'(s)$ gives that $\tilde t, s_1, s_2$ belongs to Case 2 for all $\tilde t$ sufficiently close to $t$, thus
\[
\frac{\partial^2}{\partial t^2} I(t; s_1, s_2)  = -2a \frac{\partial^2}{\partial t^2} h_t'(s_1) = 0,
\]
where in the last equality we used that $t\mapsto h_t'(s_1)$ is an affine function in $t$.

In Case 3, let us denote $f(t) := h_t'(s_1), g(t) := h_t'(s_2)$.  We then have that the half-width and half-height of $R(t;s_1, s_2)$ are $1/(2f)$ and $1/(2g)$ respectively. Note that $t, s_1, s_2$ belong to Case 3 if and only if
\begin{equation}\label{L_condition}
\left|\frac{1}{2f} - \frac{1}{2g}\right| < a \quad\text{ and }\quad \frac{1}{2f}+\frac{1}{2g}>a.
\end{equation}
In this case, the set $R(t; s_1, s_2) \cap (S_a)^c$ consists of two identical isosceles right triangles, whose legs have length $\frac{1}{2f} + \frac{1}{2g} - a$. (See Figure \ref{fig_3cases} for an illustration of this fact.) We thus have
\[
I(t; s_1, s_2) = \frac{|R(t; s_1, s_2) \cap (S_a)^c|}{|R(t; s_1, s_2)|} = \frac{(\frac{1}{2f} + \frac{1}{2g} - a)^2 }{(fg)^{-1}} = \frac{f}{4g} + \frac{g}{4f} + a^2 fg -af - ag + \frac{1}{2}.
\]
Recall that $f, g$ are positive for all $t\in [0,1]$, and they are affine functions of $t$. Thus $f'(t), g'(t)$ are constants, which may be positive, negative, or zero. Taking the second derivative in $t$ gives us
\begin{equation*}
\begin{split}
\frac{\partial^2}{\partial t^2} I(t; s_1, s_2) &=  \frac{g (f')^2}{2f^3} + \frac{f (g')^2}{2g^3} +  \left(2a^2 - \frac{1}{2f^2} - \frac{1}{2g^2}\right) f'g', \\
\end{split}
\end{equation*}
where the right hand side is a quadratic form of $f'$ and $g'$. We compute its discriminant as
\[
\begin{split}
\Delta &:= \left(2a^2 - \frac{1}{2f^2} - \frac{1}{2g^2}\right)^2 - 4\cdot \frac{g}{2f^3}\cdot \frac{f}{2g^3}\\
&= 4\left(a-\frac{1}{2f}+\frac{1}{2g}\right)\left(a+\frac{1}{2f}-\frac{1}{2g}\right)\left(a+\frac{1}{2f}+\frac{1}{2g}\right) \left(a-\frac{1}{2f}-\frac{1}{2g}\right) < 0,
\end{split}
\]
where the inequality is due to the follow reasoning: among the four parentheses on the right hand side,  the first two are both positive due to the first inequality of \eqref{L_condition}, the third is positive due to $f,g> 0$, but the fourth is negative due to the second inequality of \eqref{L_condition}.  Combining $\Delta<0$ with the fact that $\frac{g}{2f^3}, \frac{f}{2g^3}>0$, we have that $\frac{\partial^2}{\partial t^2} I(t; s_1, s_2) \geq 0$ for any $f', g'\in \mathbb{R}$, and in fact is strictly positive as long as $f', g'$ are both non-zero.

%
%
%
This finishes the convexity proof for Case 1--Case 3. Note that $t\mapsto I(t;s_1,s_2)$ is $C^1$ in $[0,1]$ (clearly the denominator is smooth for $t\in[0,1]$, and one can easily check that the numerator $|R(t; s_1, s_2) \cap (S_a)^c|$ is $C^1$ in $[0,1]$), and is piecewise smooth. In Case 1--Case 3 we have shown that $I$ is convex on each piece, leading to the convexity of $t\mapsto I(t; s_1, s_2)$ for $t\in[0,1]$ for a.e. $s_1, s_2 \in (0,1)$. Thus the function $t\mapsto \mathcal{I}[\rho_t]$ is convex for $t\in (0,1)$.

Finally, to improve the convexity  of $t\mapsto \mathcal{I}[\rho_t]$ into strict convexity, note that if $\rho_0$ and $\rho_1$ are not identical, $h_0'(s)$ and $h_1'(s)$ cannot be identical in $(0,1)$, thus they must be strictly ordered in some open interval. Without loss of generality, assume that there is some $s_0\in (0,1)$, such that $h_0'(s) < h_1'(s)$ in some small open neighborhood of $s_0$. Then for every $a\in (0,h'_0(s_0)/2)$, we have that $R(t; s_1, s_2)$ belong to Case 3 for all $s_1, s_2$ sufficiently close to $s_0$, and for these $s_1, s_2$ we have $\partial_t h_t'(s) = h_1'(s)-h_0'(s) > 0$ for $t\in (0,1)$, implying that $\partial_t h_t'(s_1) \partial_t h_t'(s_2)>0$ for $t\in (0,1)$. Thus $\frac{\partial^2}{\partial t^2} I(t; s_1, s_2)>0$ for a positive measure of $(s_1, s_2)$ for all sufficiently small $a>0$, implying the strict convexity of $t\mapsto \mathcal{I}[\rho_t]$.
\end{proof}

\begin{remark}
Even though in \textup{(W1)} we assume that $W'(r)>0$ for all $r>0$, from the last paragraph of the proof of Proposition~\ref{prop_convex_1d}, one can see that we can obtain strict convexity under a weaker assumption: all we need is that $W(r)$ is non-decreasing in $r$ for $r>0$, and $W'(r)>0$ in $(0,r_0)$ for some $r_0>0$. The same result holds for the multi-dimension proof in Proposition~\ref{prop_convex_nd}.
\end{remark}

The multi-dimension proof will be postponed to Proposition~\ref{prop_convex_nd}. 
Finally, recall that $\mathcal{E} = \mathcal{S} + \mathcal{I}$.  Proposition~\ref{prop_convex_s} gives the  convexity of $\mathcal{S}[\rho_t]$ along the interpolation curve for $m\geq 2$, whereas Proposition~\ref{prop_convex_1d} (1D case) and Proposition~\ref{prop_convex_nd} (multi-dimension case) give the strict convexity of $\mathcal{I}[\rho_t]$ along the interpolation curve. Combining these results together immediately leads to the strict convexity of $\mathcal{E}[\rho_t]$ for $m\geq 2$.

\begin{theorem}\label{thm_convex} Let $\rho_0, \rho_1\in \mathcal{P}(\R^n) \cap C(\mathbb{R}^n)$ be two radially decreasing probability densities on $\mathbb{R}^n$ that are not identical. Consider the interpolation curve $\{\rho_t\}_{t\in[0,1]}$ as given in \eqref{h_t} and \eqref{rho_t}. Then the function $t\mapsto \mathcal{E}[\rho_t]$ is strictly convex for $t\in (0,1)$ for $m\geq 2$. 
\end{theorem}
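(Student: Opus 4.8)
The plan is to simply assemble the three convexity facts that have already been established in the excerpt. The statement of Theorem~\ref{thm_convex} assumes $\rho_0,\rho_1\in\mathcal{P}(\mathbb{R}^n)\cap C(\mathbb{R}^n)$ are radially decreasing and not identical, and recalls the decomposition $\mathcal{E}=\mathcal{S}+\mathcal{I}$ from \eqref{def_e}. The work is therefore a short bookkeeping argument: show $t\mapsto\mathcal{S}[\rho_t]$ is convex, show $t\mapsto\mathcal{I}[\rho_t]$ is strictly convex, and add.

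First I would record that $\rho_0,\rho_1\in\mathcal{P}(\mathbb{R}^n)\cap C(\mathbb{R}^n)$ in particular lie in $\mathcal{P}(\mathbb{R}^n)\cap L^\infty(\mathbb{R}^n)$, so Proposition~\ref{prop_convex_s} applies and gives that $t\mapsto\mathcal{S}[\rho_t]$ is convex on $(0,1)$ whenever $m\geq 2$ (it is in fact affine when $m=2$ and strictly convex when $m>2$, but convexity is all we need). Next, for the interaction term I would split into the two cases covered in the excerpt: for $n=1$, Proposition~\ref{prop_convex_1d} directly gives that $t\mapsto\mathcal{I}[\rho_t]$ is strictly convex on $(0,1)$, using (W1) and (W2) together with the hypothesis that $\rho_0\ne\rho_1$; for $n\geq 2$, the same conclusion is provided by Proposition~\ref{prop_convex_nd} (the multi-dimensional analogue, whose proof is deferred to Section~\ref{sec_convexity}). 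In either case the key input that is not automatic is the strictness, which the excerpt's proof of Proposition~\ref{prop_convex_1d} extracts from the fact that $h_0'$ and $h_1'$, being non-identical, are strictly ordered on some open interval; I would just cite this rather than reprove it.

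Finally I would add the two statements: since $t\mapsto\mathcal{S}[\rho_t]$ is convex and $t\mapsto\mathcal{I}[\rho_t]$ is strictly convex on $(0,1)$, their sum $t\mapsto\mathcal{E}[\rho_t]=\mathcal{S}[\rho_t]+\mathcal{I}[\rho_t]$ is strictly convex on $(0,1)$, which is exactly the claim. One small point worth a sentence: the propositions being combined have slightly different regularity hypotheses on $\rho_0,\rho_1$ (Proposition~\ref{prop_convex_s} only needs $L^\infty\cap\mathcal{P}$ and radial monotonicity, while Propositions~\ref{prop_convex_1d}/\ref{prop_convex_nd} ask for continuity), so I would note at the outset that the hypotheses of Theorem~\ref{thm_convex} ($C(\mathbb{R}^n)$, radially decreasing, not identical) are strong enough to feed all of them simultaneously.

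There is no real obstacle here — the theorem is a corollary packaging the genuine work done in Propositions~\ref{prop_convex_s}, \ref{prop_convex_1d} and \ref{prop_convex_nd}. If anything needs care it is only making sure the quantifiers on $m$ ($m\geq 2$ for the entropy, any $m$ for the interaction) and on the potential ((W1)--(W2)) are stated consistently with those propositions, and that the "not identical" hypothesis is carried through so that strict (not merely weak) convexity is obtained.
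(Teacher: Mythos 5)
Your proposal is correct and is exactly the route the paper takes: the theorem is presented as an immediate consequence of combining Proposition~\ref{prop_convex_s} (convexity of $\mathcal{S}[\rho_t]$ for $m\geq 2$) with Proposition~\ref{prop_convex_1d} (1D) and Proposition~\ref{prop_convex_nd} (general dimension) (strict convexity of $\mathcal{I}[\rho_t]$), then adding. Your care about the quantifiers and the compatibility of hypotheses is appropriate and matches what the paper does implicitly.
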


\section{Definitions and uniqueness proof of steady states}\label{sec:definitions}

In this section, we state the notion of steady states to \eqref{eq:evolution} and their properties. We then present the proofs for uniqueness of steady states. We first give a shorter proof when \eqref{eq:evolution} has a rigorous gradient flow structure in Section~\ref{sec_flow}. We then deal with the general case in Section~\ref{sec_general}. 

\subsection{Definition and basic properties of steady states}
We define the steady state of \eqref{eq:evolution} as follows, similar to \cite[Definition 1]{CHMV}.
\begin{definition}\label{def:gendef}
    We say that $\rho_s\in \mathcal{P}(\R^n) \cap L^\infty(\mathbb{R}^n)$ is a steady state of the evolution equation \eqref{eq:evolution}, if $\nabla \rho_s^m\in H^1_{loc}(\R^n)$, $\nabla W*\rho_s\in L^1_{loc}(\R^n)$ and
    \begin{equation}\label{critp}
                \nabla \rho_s^m=-\rho_s\nabla W*\rho_s \quad\text{ in }\mathbb{R}^n,
    \end{equation}
    in the sense of distributions in $\mathbb{R}^n$. 
    
    If $W$ satisfies \textup{(W2)} for some $k \in (-n, 1-n)$, we further require $\rho_s \in C^{0,\alpha}(\mathbb{R}^n)$ for some $\alpha \in (1-k-n, 1)$. If $\lim_{|x|\to\infty} W(x)\to\infty$, we further require  $\rho_s W(1+|x|) \in L^1(\mathbb{R}^n)$.
\end{definition}

Below we state some properties of the steady state that we will use later. The proof is a combination of the arguments in the previous literature, which we briefly outline for the sake of completeness. 

\begin{lemma}\label{lemma_stat}
Assume $m>1$, and $W$ satisfies \textup{(W1)--(W4)}. Let $\rho_s \in \mathcal{P}(\R^n) \cap L^\infty(\mathbb{R}^n)$ be a steady state to \eqref{eq:evolution} in the sense of Definition~\ref{def:gendef}. Then $\rho_s$ satisfies the following:

\begin{enumerate}
\item[\textup{(a)}] $W * \rho_s \in L^\infty_{loc}(\mathbb{R}^n)$, and $\nabla W*\rho_s \in L^\infty(\mathbb{R}^n)$.
\item[\textup{(b)}]  $\rho_s \in C(\mathbb{R}^n)$, and it is radially decreasing up to a translation.
\item[\textup{(c)}] $\frac{m}{m-1}\rho_s^{m-1} + W*\rho_s = C$ in $\supp\rho_s$ for some constant $C$.
\item[\textup{(d)}]  $\rho_s$ is compactly supported if either $\lim_{|x|\to\infty} W(x)=+\infty$, or $m\geq 2$. 
\item[\textup{(e)}] After a translation, denote $\supp \rho_s = B(0,R_s)$. Then we have $\rho_s$ is strictly decreasing in $r$ for $r\in (0,R_s)$.
\end{enumerate}
\end{lemma}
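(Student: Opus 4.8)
My plan is to establish the five properties in the order they are listed, since each one feeds into the next. For (a), the point is that $\nabla W * \rho_s$ makes pointwise sense and is bounded: split $W' $ using (W2) near the origin and (W3) far away, so that $\nabla W \in L^1_{loc} + L^\infty$; since $\rho_s \in \mathcal P(\R^n) \cap L^\infty$, Young's inequality gives $\nabla W * \rho_s \in L^\infty(\R^n)$, and similarly $W * \rho_s \in L^\infty_{loc}$ once one controls the growth of $W$ itself via (W4) (or boundedness of $W$ for $r\ge 1$). Granted (a), equation \eqref{critp} reads $\nabla \rho_s^m = -\rho_s \nabla W * \rho_s$ with right-hand side in $L^\infty$, so $\rho_s^m \in W^{1,\infty}_{loc}$, hence $\rho_s \in C^{0,\alpha}_{loc}$ and in fact continuous everywhere; this is the regularity half of (b). For the radial symmetry half of (b), I would invoke the results of \cite{CHVY} (and \cite{CHMV} for the more singular range allowed by (W2)), which say precisely that any bounded steady state is radially decreasing up to translation — this is where (W3)--(W4) are used, to put us in the setting covered by those theorems. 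After translating, we may assume $\rho_s$ is radially decreasing about the origin.

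For (c), on the set $\{\rho_s > 0\}$ we may divide \eqref{critp} by $\rho_s$ to get $\nabla\big(\tfrac{m}{m-1}\rho_s^{m-1}\big) = -\nabla(W*\rho_s)$ in the sense of distributions on the open set $\{\rho_s>0\}$; if this set is connected (which holds once we know the support is a ball, or can be argued directly from radial monotonicity) we conclude $\tfrac{m}{m-1}\rho_s^{m-1} + W*\rho_s = C$ there, and by continuity the identity extends to $\overline{\{\rho_s>0\}} = \supp\rho_s$. For (d), the compact support claim: if $\lim_{|x|\to\infty}W(x) = +\infty$, then on $\supp\rho_s$ the relation in (c) forces $\rho_s^{m-1} = \tfrac{m-1}{m}(C - W*\rho_s)$, and since $W*\rho_s(x) \to \infty$ as $|x|\to\infty$ (because $W$ grows and $\rho_s$ has unit mass), the right side becomes negative for $|x|$ large, a contradiction unless $\supp\rho_s$ is bounded. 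When $m \ge 2$ instead, one argues that a radially decreasing $L^1\cap L^\infty$ profile satisfying \eqref{critp} with an attractive, hence nonnegative-in-the-radial-direction, convolution force cannot have a tail that is too fat; more precisely $\partial_r(\rho_s^m) = -\rho_s\, (W'*\rho_s)_r \le 0$ together with an ODE-comparison / integral estimate on the tail (using that $W'*\rho_s \ge c/|x|^{?}$ is bounded below by a positive constant times the mass enclosed) shows $\rho_s$ decays fast enough to hit zero at finite radius precisely when $m\ge 2$; this is essentially the argument in \cite{bedrossian1, BFH14, kaib} and I would cite it. Finally (e): once $\supp\rho_s = B(0,R_s)$ and $\rho_s$ is radially decreasing and continuous with $\rho_s(R_s) = 0$, strict monotonicity follows from (c): if $\rho_s$ were constant on some annulus $\{r_1 < |x| < r_2\} \subset B(0,R_s)$, then on that annulus $W*\rho_s \equiv C - \tfrac{m}{m-1}\rho_s^{m-1}$ is constant, but $\Delta(W*\rho_s)$ relates to $\rho_s$ via the structure of $W$ (e.g. when $W$ is no more singular than Newtonian, one can compare with the Newtonian part), forcing $\rho_s$ to vanish on the annulus, contradicting that it lies strictly inside the support.

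The main obstacle I expect is part (d) in the case $m\ge 2$ (and the clean justification of strict monotonicity in (e)). The compact-support statement for $m\ge 2$ is genuinely a quantitative tail estimate rather than a soft argument: one must exploit that for a radially decreasing density the enclosed mass is bounded below away from the center, so the radial component of the attractive force $W'*\rho_s$ is bounded below by a positive constant on any fixed annulus, and then feed this into $\partial_r(\rho_s^{m}) = -\rho_s\,(W'*\rho_s)_r$ to get $\partial_r(\rho_s^{m-1}) \le -c$ for a constant $c>0$ on the far field, which integrates to force $\rho_s^{m-1}$ — and hence $\rho_s$ — to reach $0$ at a finite radius; the exponent bookkeeping is exactly what makes $m=2$ the borderline (for $m<2$ the same computation only gives an integrable-but-nonzero tail). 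Everything else is bootstrapping regularity and quoting the symmetry results, so the write-up should mostly consist of careful citations plus this one real estimate. Since the statement says the proof is "a combination of the arguments in the previous literature," I would keep (a)--(c),(e) brief and give the ODE-comparison for (d) in a few lines.
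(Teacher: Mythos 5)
Your treatment of parts (a)--(c), and of the first case of (d) (where $\lim_{|x|\to\infty}W(x)=+\infty$), is essentially what the paper does: decompose $W$ via (W2)/(W3) to get $\nabla W*\rho_s\in L^\infty$, invoke the radial symmetry theorems of \cite{CHVY,CHMV}, and obtain (c) on the connected support. The gap is the second case of (d), $m\ge 2$ with $W$ bounded at infinity. Your proposed ODE comparison requires $(W'*\rho_s)(r)$ to be bounded below by a positive constant (or at least a nonintegrable rate) for large $r$, but (W1)--(W4) only provide the \emph{upper} bound $W'(r)\le C_w$ for $r>1$; if, say, $W'(r)=e^{-r}$ for $r>1$, then $W'*\rho_s$ decays exponentially and the comparison yields nothing. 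Moreover, if you did have $\partial_r(\rho_s^{m-1})\le -c<0$ on the far field, it would integrate to compact support for \emph{every} $m>1$, so the mechanism you describe cannot be what singles out $m=2$ as the threshold.

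What the paper does instead for $m\ge 2$ with bounded $W$ is not an ODE argument. Normalize $\lim_{r\to\infty}W(r)=0$, so that (together with $\lim_{r\to\infty}(W*\rho_s)(r)=0$) relation (c) becomes $\tfrac{m}{m-1}\rho_s^{m-1}=(-W)*\rho_s$ with $-W\ge 0$. Because $\rho_s$ is radially decreasing and $(-W)$ is bounded below by a positive constant on an annulus $\{1\le|z|\le2\}$, one gets the pointwise bound $\big((-W)*\rho_s\big)(r)\ge c\,\rho_s(r)$ for $r>1$, hence $\rho_s^{m-2}(r)\ge c'>0$ wherever $\rho_s(r)>0$. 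For $m>2$ this contradicts $\rho_s\to0$ at the edge of $\supp\rho_s$ (or at infinity), forcing compact support. For the borderline $m=2$ the bound is vacuous, and the paper instead takes Fourier transforms of the relation and invokes the Krein--Rutman-type argument of \cite{BDF}. Finally, for (e) the paper does not pass through $\Delta(W*\rho_s)$, which has no useful relation to $\rho_s$ for general non-Newtonian $W$; it reads (c) as $\rho_s=\big(\tfrac{m-1}{m}(C-W*\rho_s)\big)^{1/(m-1)}$ and uses directly that $W*\rho_s$ is strictly radially increasing when $W'>0$ and $\rho_s$ is radially decreasing, so $\rho_s$ is strictly decreasing inside its support. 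You should replace your (d) argument for $m\ge2$ by the pointwise comparison (for $m>2$) plus the Fourier argument (for $m=2$), and streamline (e) accordingly.
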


\begin{proof} 
The proof of $W * \rho_s \in L^\infty_{loc}(\mathbb{R}^n)$ can be done in the same way as \cite[equation (2.4)]{CHVY}, where we used (W4) and the assumption that $\rho_s W(1+|x|) \in L^1(\mathbb{R}^n)$. To prove $\nabla W*\rho_s \in L^\infty(\mathbb{R}^n)$, if $W$ satisfies (W2) with $k>1-n$, we directly decompose the convolution integral $\nabla W*\rho$ into near- and far-field sets $\mathcal{A}:=\{y:|x-y|<1\}$ and $\mathcal{B}:=\{y:|x-y|\geq1\}$, and use (W2) and (W3) to control these two integrals respectively. For $k \in (-n, 1-n)$, we use the additional H\"older regularity of $\rho_s$ in Definition \ref{def:gendef} and proceed as in the proof of \cite[Lemma~2.2(ii)]{CCH17}.

Once (a) is obtained, a standard argument (see the proof of \cite[Lemma 2.3]{CHVY} for example) gives 
\begin{equation}\label{eq_stat_temp}
\frac{m}{m-1}\rho_s^{m-1} +  W*\rho_s = C_i \quad\text{ in } \supp\rho_s,
\end{equation}
 where $C_i$ can be different if $\rho_s$ has more than one connected components.

The proof of (b) follows from \cite[Theorem 3]{CHMV}: even though the theorem is stated for Riesz potentials, it does not use any special homogeneity property of the potential, and the proof can be directly adapted to potentials satisfying (W1), (W2) and (W3). Note that $\rho_s$ being radially decreasing implies that its support has a single connected component, and combining this with \eqref{eq_stat_temp} gives (c).

To show (d), if $\lim_{r\to\infty} W(r) = +\infty$, it implies $\lim_{r\to\infty} (W*\rho_s)(r)=+\infty$ for any radially decreasing $\rho_s\in\mathcal{P}(\mathbb{R}^n)$. Thus $\rho_s$ must have compact support, otherwise it would violate (c). Next we aim to prove (d) under the condition $m\geq 2$ and $\lim_{r\to\infty} W(r)$ being finite. Without loss of generality let $\lim_{r\to\infty} W(r)=0$, so that $W$ is non-positive by assumption (W1). Towards a contradiction, assume that $\supp \rho_s = \mathbb{R}^n$. The fact that $\rho_s\in \mathcal{P}(\mathbb{R}^n)$ is radially decreasing gives $\lim_{r\to\infty} \rho_s(r)= 0$ and $\lim_{r\to\infty} (W*\rho_s)(r)=0$, so (c) becomes 
\begin{equation}\label{temp_rhos}
\frac{m}{m-1}\rho_s^{m-1} =(-W)*\rho_s \quad\text{ in } \mathbb{R}^n.
\end{equation}
If $m>2$, we have $\rho_s^{m-1}(r) \ll \rho_s(r)$ as $r\to\infty$, but on the other hand because $\rho$ is radially symmetric decreasing and $-W$ is positive there exists some $c(W)>0$ such that $((-W)*\rho_s)(r)\geq c\rho_s(r)$ for all $r>1$, and these two facts contradict with \eqref{temp_rhos}. If $m=2$, note that \eqref{temp_rhos} and the fact that $(-W)\geq 0$ imply that $W\in L^1(\mathbb{R}^n)$ and $\int_{\mathbb{R}^n} (-W)dx= 2$, which allows us to take the Fourier transform on both sides of \eqref{temp_rhos} and obtain
\[
2 \hat \rho_s(\xi) = - \hat{W}(\xi) \hat \rho_s(\xi) \quad\text{ for all }\xi \in \mathbb{R}^n.
\]
We can then apply the proof of \cite[Theorem 3.5]{BDF} to conclude that there cannot be such a $W$.

Finally, to prove (e), all we need is to improve the radially decreasing result of $\rho_s$ in (b) into strictly radially decreasing within $\supp\rho_s$. By (c), we have $\frac{m}{m-1}\rho_s^{m-1} = C-W*\rho_s$ for some $C$ in $B(0,R_s)$. Using the fact that $\rho_s$ is radially decreasing and $-W$ is strictly radially increasing (due to (W1)), we have that $C-W*\rho_s$ is strictly radially decreasing, which implies that $\rho_s = (\frac{m-1}{m}(C-W*\rho_s))^{1/(m-1)}$ is also strictly radially decreasing for $r\in(0,R_s)$.
\end{proof}

\subsection{A shortcut of uniqueness proof for equations with a gradient flow structure}\label{sec_flow}
As we have discussed in the discussion, \eqref{eq:evolution} is formally a gradient flow of $\mathcal{E}$ in $\mathcal{P}_2(\mathbb{R}^n)$ endowed with the 2-Wasserstein distance $d_2$. The gradient flow theory has been rigorously established for $\lambda$-convex potential $W$; see the books \cite{AGS, Villani} and the references therein. For such potentials, for any $\rho_0 \in \mathcal{P}_2(\mathbb{R}^n)$ with $\mathcal{E}[\rho_0]<\infty$, there exists a unique gradient flow $\rho(t)$ of $\mathcal{E}[\rho]$  in $\mathcal{P}_2(\mathbb{R}^n)$ endowed with the 2-Wasserstein distance $d_2$. In addition, the gradient flow coincides with the unique weak solution of \eqref{eq:evolution} with initial data $\rho_0$. Recently, the gradient flow theory has been generalized to energy functionals $\mathcal{E}$ that are $\omega$-convex \cite{CraigNonconvex} (where $\omega$ is some modulus of convexity), and to the attractive Newtonian potential \cite{CS18}. 

For all these potentials where the gradient flow theory has been rigorously established, we have a short proof of uniqueness of steady states, which we present below.

\begin{theorem}\label{thm_gf}
Assume $m\geq 2$, and $W$ satisfies \textup{(W1)--(W4)}. In addition, assume $W$ is such that \eqref{eq:evolution} has a local-in-time unique gradient flow solution for any $\rho_0 \in \mathcal{P}_2(\mathbb{R}^n)$ with $\mathcal{E}[\rho_0]<\infty$. Then \eqref{eq:evolution} has at most one steady state in  $L^\infty(\mathbb{R}^n) \cap \mathcal{P}_2(\mathbb{R}^n)$ up to a translation.
\end{theorem}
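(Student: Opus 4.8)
The plan is to argue by contradiction: suppose $\rho_s^0$ and $\rho_s^1$ are two steady states in $L^\infty(\mathbb{R}^n)\cap \mathcal{P}_2(\mathbb{R}^n)$ that are not translates of one another. By Lemma~\ref{lemma_stat}, both are continuous, compactly supported (here we crucially use $m\geq 2$), radially decreasing up to a translation, and in fact strictly radially decreasing within their (ball-shaped) supports. After translating, we may assume both are centered at the origin; since they are not identical, they satisfy the hypotheses of Theorem~\ref{thm_convex} and of Proposition~\ref{prop_lipschitz}. Let $\{\rho_t\}_{t\in[0,1]}$ be the height-function interpolation curve \eqref{h_t}--\eqref{rho_t} connecting them. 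By Proposition~\ref{prop_lipschitz} this is a Lipschitz curve in $(\mathcal{P}_2(\mathbb{R}^n),d_2)$ with $\mathcal{E}[\rho_t]$ continuous in $t$, and by Theorem~\ref{thm_convex} the map $t\mapsto \mathcal{E}[\rho_t]$ is \emph{strictly} convex on $(0,1)$.

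Next I would invoke the general principle that, along a Lipschitz curve in the Wasserstein space, a function that is strictly convex cannot have two critical points of a gradient flow as its endpoints. Concretely: since each $\rho_s^i$ is a steady state, it is a critical point of $\mathcal{E}$ in $\mathcal{P}_2$, so the metric slope (or the minimal norm of the Wasserstein subdifferential) $|\partial\mathcal{E}|(\rho_s^i)$ vanishes for $i=0,1$. A strictly convex function along a nonconstant curve must be strictly decreasing at its left endpoint or strictly increasing at its right endpoint — more precisely, $\frac{d}{dt}\mathcal{E}[\rho_t]\big|_{t=0^+}<\frac{d}{dt}\mathcal{E}[\rho_t]\big|_{t=1^-}$, and since the difference quotient $t\mapsto (\mathcal{E}[\rho_t]-\mathcal{E}[\rho_0])/t$ is strictly increasing, either $\mathcal{E}[\rho_t]<\mathcal{E}[\rho_0]$ for small $t>0$ or $\mathcal{E}[\rho_t]<\mathcal{E}[\rho_1]$ for $t$ near $1$. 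Combining the existence of a curve of bounded speed along which $\mathcal{E}$ strictly decreases from $\rho_s^0$ with the fact that $\rho_s^0$ is a critical point yields a contradiction, via the gradient-flow existence hypothesis: starting the (locally-defined, unique) gradient flow from $\rho_s^0$ must keep it stationary, yet the strict convexity forces $\mathcal{E}$ to strictly decrease along the direction of $\rho_t$, so the Wasserstein subdifferential at $\rho_s^0$ cannot be $\{0\}$. This is the Banach-space critical-point argument of \cite{bonheure2018paths} transported to the Wasserstein setting.

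The step I expect to be the main obstacle is making the interface between "the interpolation curve $\rho_t$" and "the gradient flow / subdifferential calculus at $\rho_s^0$" fully rigorous. One has to know that a steady state in the sense of Definition~\ref{def:gendef} is genuinely a point where $|\partial\mathcal{E}|=0$ (this requires identifying $\nabla\rho_s^m/\rho_s + \nabla W*\rho_s$ with the Wasserstein gradient and using Lemma~\ref{lemma_stat}(a),(c)), and one has to extract from the strict convexity of $t\mapsto\mathcal{E}[\rho_t]$ together with the Lipschitz bound $d_2(\rho_0,\rho_t)\leq Ct$ a lower bound on the slope: roughly, $|\partial\mathcal{E}|(\rho_0)\geq \limsup_{t\to0^+}\frac{\mathcal{E}[\rho_0]-\mathcal{E}[\rho_t]}{d_2(\rho_0,\rho_t)}>0$, contradicting $|\partial\mathcal{E}|(\rho_0)=0$. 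The convexity gives that this difference quotient is bounded below by a positive constant for all small $t$ (not merely positive), which is exactly what is needed; the Lipschitz bound ensures the denominator does not blow up. Assembling these ingredients — and checking that the hypotheses of Proposition~\ref{prop_lipschitz} and Theorem~\ref{thm_convex} are met by steady states, which is guaranteed by Lemma~\ref{lemma_stat}(b),(d),(e) — completes the proof.
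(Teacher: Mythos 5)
Your proposal is correct and follows essentially the same route as the paper's proof: translate both steady states to the origin, use Lemma~\ref{lemma_stat} to verify that they satisfy the hypotheses of Proposition~\ref{prop_lipschitz} and Theorem~\ref{thm_convex}, form the height-function interpolation curve, and then derive a contradiction between the strict convexity plus the Lipschitz bound $d_2(\rho_0^s,\rho_t)\leq Ct$ (which together force $|\partial\mathcal{E}|>0$ at one endpoint) and the vanishing of the metric slope at a steady state, the latter obtained — exactly as in the paper — from the local uniqueness of the gradient flow together with the Energy Dissipation Inequality.
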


\begin{proof}
Towards a contradiction, assume that there exist two steady states $\rho_0^s$, $\rho_1^s$  in  $L^\infty(\mathbb{R}^n) \cap \mathcal{P}_2(\mathbb{R}^n)$ that are not identical to each other up to a translation. By Lemma~\ref{lemma_stat}(b,d,e), both $\rho_0^s$ and $\rho_1^s$ are continuous, compactly supported (thus $\mathcal{E}[\rho_0^s], \mathcal{E}[\rho_1^s] < \infty$), and strictly radially decreasing (within its support) up to a translation. Without loss of generality we can assume both $\rho_0^s$ and $\rho_1^s$ are both centered at the origin. For $t\in [0,1]$, let $\rho_t$ be the interpolation curve connecting $\rho_0^s$ and $\rho_1^s$ given by \eqref{h_t} and \eqref{rho_t}.

Next we consider the energy functional $\mathcal{E}[\rho_t]$ along the interpolation curve, where $\mathcal{E}[\rho_t]$ is continuous for $t\in[0,1]$ by Proposition~\ref{prop_lipschitz}, and strictly convex by Theorem~\ref{thm_convex}. Note that for a continuous and convex function in $[0,1]$, its left/right derivative is well-defined pointwise (which may be $\pm\infty$ at the endpoints), and the strict convexity gives that 
\[
\frac{d^+}{dt} \mathcal{E}[\rho_t]\Big|_{t=0} < \frac{d^-}{dt} \mathcal{E}[\rho_t] \Big|_{t=1}.
\] Thus
at least one of the following two conditions must be true :
\begin{equation}\label{two_eq}
-\frac{d^+}{dt} \mathcal{E}[\rho_t]\Big|_{t=0}=\lim_{t\to 0^+} \frac{\mathcal{E}[\rho_0^s] - \mathcal{E}[\rho_t]}{t} \in (0,+\infty] \quad\text{ or }\quad \frac{d^-}{dt} \mathcal{E}[\rho_t]\Big|_{t=1} = \lim_{t\to 1^-} \frac{\mathcal{E}[\rho_1^s] - \mathcal{E}[\rho_t]}{1-t}  \in (0,+\infty].
\end{equation}
In the rest of the proof, using the gradient flow structure and the fact that both $\rho_0^s$ and $\rho_1^s$ are steady states, we will show that both inequalities in \eqref{two_eq} must be false, 
leading to a contradiction.

By assumption, for any initial condition $\rho_0 \in \mathcal{P}_2(\mathbb{R}^n)$ with $\mathcal{E}[\rho_0]<\infty$ and any $T>0$, there exists an absolutely continuous curve $\rho:[0,T)\to \mathcal{P}_2(\R^n)$ with respect to 2-Wasserstein distance $d_2$, which is a gradient flow of $\mathcal{E}$ and corresponds to the unique distributional solution to \eqref{eq:evolution}. More specifically, $\rho(t)$ satisfies an Evolution Variational Inequality (EVI): see \cite[Definition 3.5]{AmGi} for $\lambda$-convex potential, and \cite[Definition 2.10]{CraigNonconvex} when $W$ is the Newtonian potential. Then arguing as in \cite[Proposition 3.6]{AmGi} (see also \cite{CarrilloLisiniMainini} when $W$ is the Newtonian potential), we have that the EVI implies the following Energy Dissipation Inequality (EDI):
$$
\mathcal{E}[\rho(t)]+\frac{1}{2}\int_0^t |\partial \mathcal{E}[\rho(\tau)]|^2\,d\tau+\frac{1}{2}\int_0^t |\dot\rho(\tau)|^2\,d\tau\le \mathcal{E}[\rho(0)]\qquad\mbox{for all $t\in(0,T)$},
$$
where the metric slope $|\partial \mathcal{E}|$ and the metric derivative of $\rho$ at time $\tau$ are given respectively by
$$
|\partial \mathcal{E}[\rho(\tau)]|= \limsup_{\nu\to \rho(\tau)} \frac{(\mathcal{E}[\rho(\tau)]-\mathcal{E}[\nu])_+}{d_2(\rho(\tau), \nu)}\qquad\mbox{and}\qquad |\dot\rho(\tau)|=\limsup_{h\to 0} \frac{d_2(\rho(\tau+h),\rho(\tau))}{h}.
$$
In particular, when the initial data is $\rho(0) = \rho_0^s$, $\rho(t) \equiv \rho_0^s$ is a distributional solution to \eqref{eq:evolution} due to its stationarity. Thus the EDI becomes
\[
\frac{t}{2} |\partial \mathcal{E}[\rho_0^s]|^2 \le 0 \quad\text{ for all }t\in (0,T),
\]
implying that  $|\partial \mathcal{E}[\rho_0^s]| = 0$.  Thus 
\begin{equation}\label{ineq_slope}
0 = |\partial \mathcal{E}[\rho_0^s]| = \limsup_{\nu\to \rho_0^s} \frac{(\mathcal{E}[\rho_0^s]-\mathcal{E}[\nu])_+}{d_2(\rho_0^s, \nu)} \geq  \lim_{t\to 0^+}
\frac{(\mathcal{E}[\rho_0^s]-\mathcal{E}[\rho_t])_+}{d_2(\rho_0^s, \rho_t)} \geq \lim_{t\to 0^+}
\frac{(\mathcal{E}[\rho_0^s]-\mathcal{E}[\rho_t])_+}{Ct}.
\end{equation}
Here the last inequality comes from the Lipschitz property in Proposition~\ref{prop_lipschitz}, where the Lipschitz constant $C>0$ is some finite constant depending on $\rho_0^s$ and $\rho_1^s$. Note that \eqref{ineq_slope} contradicts with the first condition of \eqref{two_eq}.

Likewise, since $\rho(t) \equiv \rho_1^s$ is also a steady state, the EDI leads to $|\partial \mathcal{E}[\rho_1^s]| = 0$. An identical argument as \eqref{ineq_slope} then gives
\[
0 = |\partial \mathcal{E}[\rho_1^s]| = \limsup_{\nu\to \rho_1^s} \frac{(\mathcal{E}[\rho_1^s]-\mathcal{E}[\nu])_+}{d_2(\rho_1^s, \nu)} \geq  \lim_{t\to 1^-}
\frac{(\mathcal{E}[\rho_1^s]-\mathcal{E}[\rho_t])_+}{d_2(\rho_1^s, \rho_t)} \geq \lim_{t\to 1^-}
\frac{(\mathcal{E}[\rho_1^s]-\mathcal{E}[\rho_t])_+}{C(1-t)},
\]
contradicting with the second condition of \eqref{two_eq}. We have shown that both equations of \eqref{two_eq} must be false, thus there cannot be two different radially decreasing steady states $\rho_0^s$ and $\rho_1^s$ in $L^\infty(\mathbb{R}^n) \cap \mathcal{P}_2(\mathbb{R}^n)$, finishing the proof.
\end{proof}

\color{black}

\subsection{Uniqueness proof for general attractive potentials}\label{sec_general}
Next we aim to prove uniqueness of steady states for general attractive interaction potentials, for which the gradient flow definition is not well defined.

To begin with, we need to establish some extra regularity and non-degeneracy properties of $\rho_s$. In the next lemma, we will show that $\rho_s$ is smooth inside its support, and has a strictly negative Laplacian at the origin. The smoothness property has been shown in \cite{CHMV} when $W$ is a Riesz potential.

\begin{lemma}\label{prop:regularity}
Assume $m>1$, and $W$ satisfies \textup{(W1)--(W4)}.  Let $\rho_s \in \mathcal{P}(\R^n) \cap L^\infty(\mathbb{R}^n)$ be a steady state to \eqref{eq:evolution} in the sense of Definition~\ref{def:gendef}, with center of mass at the origin. Then $\rho_s$ is smooth in the interior of its support, and satisfies the following non-degeneracy condition at the origin:
\begin{equation}\label{eq:nondegneracyprop}
    \Delta\rho_s(0)<0.
\end{equation}
\end{lemma}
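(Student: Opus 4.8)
\textbf{Proof plan for Lemma~\ref{prop:regularity}.}

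The plan is to exploit the Euler--Lagrange identity from Lemma~\ref{lemma_stat}(c), namely $\frac{m}{m-1}\rho_s^{m-1} + W*\rho_s = C$ in $B(0,R_s)$, together with the smoothing properties of convolution with $W$ and an elliptic bootstrap. First I would set $u := \rho_s^{m-1}$, so that the Euler--Lagrange equation reads $u = \frac{m-1}{m}(C - W*\rho_s)$ inside the support. From Lemma~\ref{lemma_stat}(a) we know $W*\rho_s \in L^\infty_{loc}$ and $\nabla W*\rho_s \in L^\infty$; since $W$ is smooth away from the origin and no more singular than a locally integrable Riesz potential (W2), each further convolution with $\nabla W$ gains regularity. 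Quantitatively, writing $\nabla (W*\rho_s) = (\nabla W)*\rho_s$ and splitting into near-field $|x-y|<1$ and far-field pieces: the far-field part is smooth because $W\in C^\infty$ away from the origin and $\rho_s$ is compactly supported (Lemma~\ref{lemma_stat}(d), using $m\geq2$ is not needed here — compact support already follows from (b)(c) in the relevant cases, but for the general statement we may just work locally in the interior where $\rho_s$ is bounded and H\"older by Definition~\ref{def:gendef}); the near-field part is a Riesz-type potential of a bounded (indeed H\"older, when $k<1-n$) function, hence lies in $C^{1,\alpha}_{loc}$. This gives $W*\rho_s \in C^{1,\alpha}_{loc}$, hence $u \in C^{1,\alpha}_{loc}$ in the interior of the support, and since $\rho_s = u^{1/(m-1)}$ with $u>0$ strictly in the interior (by Lemma~\ref{lemma_stat}(e), $\rho_s>0$ and strictly decreasing, so $u$ is bounded away from $0$ on compact interior subsets), $\rho_s \in C^{1,\alpha}_{loc}$ too. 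Then one bootstraps: $\rho_s \in C^{k,\alpha}_{loc} \Rightarrow (\nabla W)*\rho_s \in C^{k,\alpha}_{loc}$ in the interior $\Rightarrow W*\rho_s \in C^{k+1,\alpha}_{loc} \Rightarrow u, \rho_s \in C^{k+1,\alpha}_{loc}$, iterating to conclude $\rho_s \in C^\infty$ in the interior of its support.

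For the non-degeneracy \eqref{eq:nondegneracyprop}, the key is to apply the Laplacian to the Euler--Lagrange identity at the origin. Taking $\Delta$ of $\frac{m}{m-1}\rho_s^{m-1} + W*\rho_s = C$ and evaluating at $x=0$ (which is now legitimate by the smoothness just established, and $\rho_s(0)=\|\rho_s\|_\infty>0$), we get
\begin{equation*}
\frac{m}{m-1}\Delta\big(\rho_s^{m-1}\big)(0) = -\Delta(W*\rho_s)(0) = -(\Delta W)*\rho_s(0)
\end{equation*}
in the distributional sense; one must be careful since $\Delta W$ may be a distribution with a singular part at the origin (e.g. for Newtonian $W$, $\Delta W = c_n\delta_0$ plus, after the dimensional constant, nothing — or for general (W2) potentials $\Delta W$ is a function away from $0$ possibly with an atom or a non-integrable singularity). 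I would instead argue as follows: the quantity $g := C - W*\rho_s = \frac{m}{m-1}\rho_s^{m-1}$ is radially symmetric, smooth near the origin, and attains its \emph{strict} maximum at $x=0$ (because $\rho_s$ is strictly radially decreasing, Lemma~\ref{lemma_stat}(e)). A strict radial maximum of a smooth function forces $\Delta g(0) \leq 0$; to get the strict inequality $\Delta g(0)<0$ I would show the radial profile $g(r)$ satisfies $g''(0)<0$. This in turn follows because $-W*\rho_s$ has a strictly negative second radial derivative at $0$: writing out $(W*\rho_s)(x) = \int W(x-y)\rho_s(y)\,dy$ and differentiating twice in a fixed direction at $x=0$, and using that $W'(r)>0$ strictly for all $r>0$ together with $\rho_s \geq 0$, $\rho_s \not\equiv 0$, one obtains that the second derivative of $W*\rho_s$ at $0$ in any direction is strictly positive, hence $\Delta(W*\rho_s)(0)>0$, giving $\Delta(\rho_s^{m-1})(0)<0$. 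Finally, since $\rho_s(0)>0$ and $\Delta(\rho_s^{m-1})(0) = (m-1)\rho_s(0)^{m-2}\Delta\rho_s(0) + (m-1)(m-2)\rho_s(0)^{m-3}|\nabla\rho_s(0)|^2$, and $\nabla\rho_s(0)=0$ (origin is the center of the radial profile, and $\rho_s$ is smooth there), we get $\Delta(\rho_s^{m-1})(0) = (m-1)\rho_s(0)^{m-2}\Delta\rho_s(0)$, so $\Delta\rho_s(0)<0$ as claimed.

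\textbf{Main obstacle.} The delicate point is controlling the second derivative of $W*\rho_s$ at the origin for potentials $W$ that are singular there (W2 allows $W'(r)\leq C_w r^{k-1}$ with $k$ possibly in $(-n,0)$, so $W$ itself and its derivatives can blow up at $0$). One cannot naively differentiate under the integral sign twice near $x=y$. The fix is to separate the contribution of $\rho_s$ near $0$ from the rest: for $|y|$ bounded away from $0$, $W(x-y)$ is smooth in $x$ near $x=0$ and differentiation is fine, yielding a strictly positive contribution from the monotonicity $W'>0$ (here the second radial derivative of $r\mapsto W(\text{distance})$ integrated against a positive mass is handled via the identity $\partial_{ee}^2 W(x-y)|_{x=0}$ expressed through $W', W''$ and angular factors, whose angular average over the sphere is $\frac{1}{n}\Delta W(-y) = \frac{1}{n}(W''(|y|) + \frac{n-1}{|y|}W'(|y|))$, which is not obviously signed — so instead I would use the cleaner fact that $-W*\rho_s$ is radial and, being $\tfrac{m-1}{m}g$, equals a positive power of $\rho_s$, so its strict radial decrease near $0$ is \emph{equivalent} to the strict radial decrease of $\rho_s$ established in Lemma~\ref{lemma_stat}(e), and then upgrade ``strictly decreasing radial $C^2$ profile'' to ``$g''(0)<0$'' by ruling out a degenerate maximum). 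Ruling out the degenerate case $g''(0)=0$ is where a genuine argument is needed: if $g''(0)=0$ then $\rho_s^{m-1}$ would be extremely flat at the origin, and one must derive a contradiction with the equation $-\Delta(\rho_s^{m-1}) = \frac{m-1}{m}(\Delta W)*\rho_s$ by showing the right-hand side is strictly positive at $0$ — and this is precisely where the attractivity $W'>0$ enters decisively, via the representation $(\Delta W)*\rho_s(0) = \int \Delta W(-y)\rho_s(y)\,dy$ interpreted distributionally, combined with the radial monotonicity of $\rho_s$ to integrate by parts and move the Laplacian onto $\rho_s$ where no singularity of $W$ is seen. I expect this last integration-by-parts/regularization argument near the singularity of $W$ to be the technical heart of the proof.
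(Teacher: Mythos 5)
Your plan for the non-degeneracy part eventually arrives at the paper's actual mechanism: regularize $W$ near the origin, write $\Delta(W*\rho_s)(0) = \int \Delta W(-y)\,\rho_s(y)\,dy$, integrate by parts once to move one derivative onto $\rho_s$, and conclude positivity from $W'(r)>0$, $\partial_r\rho_s\le 0$, $\partial_r\rho_s\not\equiv 0$. That is precisely what the paper does (with an explicit cutoff $\phi_{\epsilon,2\epsilon}$, two error terms $I_2$ and $\Delta((W\phi_{\epsilon,2\epsilon})*\rho_s)(0)$ both shown to vanish as $\epsilon\to 0$), and you are right that one should not try to differentiate $W$ twice: the angular average of $\partial^2_{ee}W$ is $\tfrac1n\Delta W = \tfrac1n(W''+\tfrac{n-1}{r}W')$, which has no sign for general attractive $W$. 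However, one of your intermediate suggestions is genuinely wrong and should be dropped rather than ``upgraded'': a strictly radially decreasing $C^2$ profile $g$ need \emph{not} satisfy $g''(0)<0$ (take $g(r)=1-r^4$), so ``strict radial decrease near $0$ is equivalent to strict radial decrease of $\rho_s$'' gives you nothing toward the strict inequality. You acknowledge this, but it is worth emphasizing that the only argument that closes the gap is the integration-by-parts one; the qualitative maximum-principle route dead-ends at $\Delta g(0)\le 0$.

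For the smoothness bootstrap, your route differs from the paper's in a way that matters in the most singular regime. The paper decomposes $W = W\phi_{\epsilon,2\epsilon} + W(1-\phi_{\epsilon,2\epsilon})$ and uses a fractional-Laplacian estimate (Lemma~\ref{lem:app}: $(-\Delta)^s(W\phi)\in L^1$ for $s<\min\{(n+k)/2,1\}$) to gain only $\min\{n+k,2\}$ derivatives per iteration; this is exactly the correct gain for a kernel $\sim |x|^k$ and works for all $k>-n$. Your proposal gains a full derivative per step by asserting $\rho_s\in C^{j,\alpha}_{loc}\Rightarrow(\nabla W)*\rho_s\in C^{j,\alpha}_{loc}\Rightarrow W*\rho_s\in C^{j+1,\alpha}_{loc}$. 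The first implication is a Young-in-H\"older estimate that requires $\nabla W\in L^1_{loc}$, i.e.\ $k>1-n$. In the range $k\in(-n,1-n]$ permitted by (W2) (with the compensating H\"older hypothesis in Definition~\ref{def:gendef}), $\nabla W$ is not locally integrable, the convolution $(\nabla W)*\rho_s$ only makes sense via the extra H\"older regularity of $\rho_s$, and the H\"older class is \emph{not} simply preserved — the correct gain per step is $n+k<1$. So your bootstrap, as stated, is too optimistic for very singular $W$; to cover the whole range of (W2) one either needs the fractional-Laplacian machinery the paper uses or a more careful Riesz-potential/Schauder estimate that tracks the sub-unit gain $n+k$ per step. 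For $k>1-n$ your argument is fine and arguably more elementary.
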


\begin{proof}
By Lemma~\ref{lemma_stat}(b,d), $\rho_s$ is radially decreasing up to a translation, and compactly supported. Thus if $\rho_s$ has the center of mass at the origin, it must be centered at the origin, with its support being some open ball $B_{R_0}$ with $R_0<\infty$. By Lemma~\ref{lemma_stat}(a), $W*\rho_s$ is Lipschitz, therefore Lemma~\ref{lemma_stat}(c) gives that $\rho_s^{m-1}$ is Lipschitz in $B_{R_0}$. The fact that $\rho_s$ is radially decreasing yields that $\rho_s \geq \rho_s(r)>0$ in $B_r$ for any $r<R_0$, thus $\rho_s$ is Lipschitz in $B_{r}$ for $r<R_0$ (where the Lipschitz constant may blow up as $r\nearrow R_0$.) As a result, we have $\rho_s \in C^\alpha_{loc}(B_{R_0})$ for any $\alpha \in (0,1)$. 

Next we will use an iterative argument to obtain higher regularity of $\rho_s$ in $B_{R_0}$. Assuming that $\rho_s \in C^\alpha_{loc}(B_{R_0})$ for some $\alpha>0$, in each iterative step our goal is to show 
\begin{equation}\label{temp_claim0}
\rho_s \in C^{\beta}_{loc}(B_{R_0}) \quad\text{ for any }\beta < \alpha + \min\{n +k,2\},
\end{equation} where $k>-n$ is the power in the assumption (W2) such that $W'(r)\leq C_w r^{k-1}$ for all $0<r<1$. 
(Here if $\beta>1$, the notation $C^\beta$ stands for $C^{k,s}$, where $k = \lfloor \beta \rfloor$ and $s=\beta-k$.)

For any $0<R_1<R_2<\infty$, let $\phi_{R_1,R_2}$ be a radially decreasing function satisfying
\begin{equation}\label{eq:phi}
    \phi_{R_1, R_2}\in C^\infty_c(\mathbb{R}^n),\quad 0\le\phi_{R_1, R_2}\le 1, \quad\phi_{R_1, R_2}(x)= \begin{cases} 1& \text{ in } B_{R_1}\\
    0 &  \text{ in } (B_{R_2})^c.
    \end{cases}
\end{equation}

%
%


For any $0<\epsilon\ll 1$, by Lemma~\ref{lemma_stat}(c) (where we decompose $W = W\phi_{\epsilon, 2\epsilon} + W(1-\phi_{\epsilon, 2\epsilon})$), we have
\begin{equation}\label{eq0000}
\frac{m}{m-1}\rho_s^{m-1}+(W\phi_{\epsilon, 2\epsilon}) *\rho_s =C - (W(1-\phi_{\epsilon, 2\epsilon})) *\rho_s \quad\text{ in } B_{R_0}.
\end{equation}
Note that RHS of \eqref{eq0000} is $C^\infty$ in $B_{R_0}$, since $W\in C^\infty(\R^n\setminus\{0\})$ implies that $W(1-\phi_{\epsilon, 2\epsilon}) \in C^\infty(\mathbb{R}^n)$. Next we take a closer look on the second term on the left hand side, and aim to show that 
\begin{equation}\label{temp_claim}
(W\phi_{\epsilon, 2\epsilon}) *\rho_s \in C^{\beta}_{loc}(B_{R_0-4\epsilon}) \quad\text{ for any }\beta < \alpha + \min\{n + k,2\}.
\end{equation}
Once this is done, \eqref{eq0000} implies that $\frac{m}{m-1}\rho_s^{m-1} \in C^\beta_{loc}(B_{R_0-4\epsilon})$ for $\beta$ as above. The fact that $\rho_s$ is bounded below by a positive constant in $B_{R_0-4\epsilon}$ then implies that $\rho_s \in C^{\beta}_{loc}(B_{R_0-4\epsilon})$. Since $0<\epsilon\ll 1$ can be made arbitrarily small, we obtain \eqref{temp_claim0}.

Next we prove \eqref{temp_claim}. Let us decompose $(W\phi_{\epsilon, 2\epsilon}) *\rho_s$ as
\[
\begin{split}
(W\phi_{\epsilon, 2\epsilon}) *\rho_s &= \underbrace{(W\phi_{\epsilon, 2\epsilon})}_{=:f_1} *\underbrace{(\rho_s \phi_{R_0-2\epsilon,R_0-\epsilon})}_{:= g_1} + (W\phi_{\epsilon, 2\epsilon}) *(\rho_s (1-\phi_{R_0-2\epsilon,R_0-\epsilon}))\\
& =: T_1 + T_2.
\end{split}
\]
We deal with the term $T_1$ first. By Lemma~\ref{lem:app} in the appendix, the assumptions (W1) and (W2) imply that $f_1 := W \phi_{\epsilon, 2\epsilon}$ satisfies  
\begin{equation}\label{reg_temp}
(-\Delta)^s f_1 \in L^1(\mathbb{R}^n) \quad\text{ for any }0<s<\min\left\{\frac{n+k}{2}, 1\right\}.
\end{equation}
In addition, the iteration assumption $\rho_s \in C^\alpha_{loc}(B_{R_0})$ and the fact that $ \phi_{R_0-2\epsilon,R_0-\epsilon}$ is supported in $B_{R_0-\epsilon}$ give that $g_1 := \rho_s \phi_{R_0-2\epsilon,R_0-\epsilon} \in C_c^\alpha(\mathbb{R}^n)$.  Thus properties of fractional Laplacian \cite{Silvestre} gives that for any $\alpha' \in (0,\alpha)$,
\[
\|(-\Delta)^{\frac{\alpha'}{2}} g_1\|_{L^\infty} \leq C(R_0)\|(-\Delta)^{\frac{\alpha'}{2}} g_1\|_{C^{\alpha-\alpha'}} \leq C(R_0) (\|g_1\|_{C^\alpha} + \|g_1\|_{L^\infty}) < \infty,
\]
where in the first inequality we use that $g_1$ is supported in $B_{R_0}$, and in the second inequality we apply \cite[Proposition 2.5--2.7]{Silvestre}. 
Combining Young's inequality with the above estimates for $f_1$ and $g_1$, we have that for any $\alpha' \in (0,\alpha)$ and $s$ as in \eqref{reg_temp},
$$
\|(-\Delta)^{\frac{\alpha'}{2} + s}T_1\|_{L^\infty} = \|(-\Delta)^{\frac{\alpha'}{2} + s} (f_1 * g_1)\|_{L^\infty}\le \|(-\Delta)^s f_1\|_{L^1}\|(-\Delta)^{\frac{\alpha'}{2}} g_1\|_{L^\infty}<\infty.
$$
Finally, since $T_1 \in L^\infty(\mathbb{R}^n)$ and $(-\Delta)^{\alpha'/2 + s}T_1 \in L^\infty(\mathbb{R}^n)$, \cite[Proposition 2.9]{Silvestre} gives that 
\[
T_1 \in C^\beta(\mathbb{R}^n) \text{ for any }\beta < \alpha'+2s,
\]
that is,
$T_1 \in C^\beta(\mathbb{R}^n)$ for any $\beta < \alpha + \min\{n+k, 2\}$.

For the term $T_2$, since $ \rho_s (1-\phi_{R_0-2\epsilon,R_0-\epsilon}) \equiv 0$ in $B_{R_0-2\epsilon}$, and $\supp (W\phi_{\epsilon, 2\epsilon}) \subset B_{2\epsilon}$, we have that $T_2 \equiv 0$ in $B_{R_0-4\epsilon}$ (thus is smooth in $B_{R_0-4\epsilon}$). Combining this with the above regularity for $T_1$ finishes the proof of \eqref{temp_claim}, thus gives \eqref{temp_claim0} since $\epsilon>0$ can be made arbitrarily small. Once we obtain \eqref{temp_claim0}, we can iterate its proof and improve the regularity in $B_{R_0}$ in each iteration, thus $\rho_s$ is smooth in the interior of $B_{R_0}$.

%
%

Next, we show the non-degeneracy property \eqref{eq:nondegneracyprop}. For $0<\epsilon\ll 1$, we decompose $W=W\phi_{\epsilon,2\epsilon}+W(1-\phi_{\epsilon,2\epsilon})$. Using that $W(1-\phi_{\epsilon,2\epsilon})$ is smooth and radially symmetric, as well as the fact that $\rho_s$ is supported in $B_{R_0}$, we have
\begin{equation}\label{eq_laplace}
\begin{split}
\Delta \big( (W(1-\phi_{\epsilon,2\epsilon}))*\rho_s\big)(0)&=\int_{\R^n}\Delta (W(1-\phi_{\epsilon,2\epsilon}))(-y)\,\rho_s(y)\,dy\\
&=-\int_{B_{R_0}}\big(\nabla (W(1-\phi_{\epsilon,2\epsilon}))(-y)\big)\cdot\nabla\rho_s(y)\,dy\\
&=- \displaystyle\int_0^{R_0} \partial_r(W(1-\phi_{\epsilon,2\epsilon}))(r)\; \partial_r \rho_s(r)\; \omega_n r^{n-1}dr\\
&=-\underbrace{ \displaystyle\int_0^{R_0} (\partial_rW)(1-\phi_{\epsilon,2\epsilon}) \, (\partial_r \rho_s)\, \omega_n r^{n-1}dr}_{=:I_1} + \underbrace{ \int_0^{R_0} W (\partial_r \phi_{\epsilon,2\epsilon}) \, (\partial_r \rho_s)\, \omega_n r^{n-1}dr}_{=:I_2}.\\
\end{split}
\end{equation}
Since $\rho_s$ is smooth near the origin with $\nabla\rho_s(0)=0$, there exists some finite constant $C$ such that
$$
\partial_r\rho_s(r)\le C r \qquad\mbox{for any $r\in (0,2\epsilon)$.}
$$
Using that $W \in L^1_{loc}(\mathbb{R}^n)$ and that $|\partial_r\phi_{\epsilon,2\epsilon}| \le C\epsilon^{-1} 1_{(\epsilon,2\epsilon)}$, we bound $I_2$ as
$$
\lim_{\epsilon\to0^+}I_2 \le \int_\epsilon^{2\epsilon} W(r) C\epsilon^{-1} \cdot C\epsilon\cdot \omega_n r^{n-1}dr \leq C\lim_{\epsilon\to0^+}\|W\|_{L^1(B_{2\epsilon})}=0.
$$
As for $I_1$, using $\partial_r W > 0$, $\partial_r \rho_s\le 0$ and the fact that $\partial_r \rho_s \not \equiv 0$ for $r\in(0,R_0)$, monotone convergence theorem gives
\begin{equation}\label{eq:radial}
\lim_{\epsilon\to0^+} I_1 =  \int_0^{R_0} (\partial_r W)(\partial_r \rho_s ) \omega_n r^{n-1} dr>0.    
\end{equation}
Putting the above limits for $I_1$ and $I_2$ together gives
\begin{equation}\label{temp_laplace2}
\lim_{\epsilon\to0^+} \Delta \big( (W(1-\phi_{\epsilon,2\epsilon}))*\rho_s\big)(0) < 0.
\end{equation}
Next, we use the $C^2$ regularity of $\rho_s$ near the origin and Young's inequality to bound
$$
|\Delta\big((W\phi_{\epsilon,2\epsilon})*\rho_s\big)(0)|\le \|W\|_{L^1(B_{2\epsilon})}\|\Delta\rho_s\|_{L^\infty(B_{2\epsilon})}.
$$
Because $W\in L^1_{loc}(\R^n)$, we obtain that
\begin{equation}\label{eq:limit}
\lim_{\epsilon\to 0^+}|\Delta\big((W\phi_{\epsilon,2\epsilon})*\rho_s\big)(0)|\le \lim_{\epsilon\to 0^+}\|W\|_{L^1(B_{2\epsilon})}\|\Delta\rho_s\|_{L^\infty(B_{2\epsilon})}=0.    
\end{equation}
Combining \eqref{temp_laplace2} and \eqref{eq:limit}, we obtain the desired non-degeneracy result \eqref{eq:nondegneracyprop}.
\end{proof}

The above regularity and non-degeneracy result allows us to obtain some further regularity properties of the interpolation curve in Section~\ref{sec:further_reg}. Using these properties, (in particular, using Proposition~\ref{prop:general}), we are now ready to prove Theorem~\ref{thm:uniqueness} under the assumptions (W1)--(W4), without using the gradient flow structure.
\begin{proof}[\textbf{\textup{Proof of Theorem~\ref{thm:uniqueness}}}]

Towards a contradiction, assume that there exist two steady states $\rho_0^s$, $\rho_1^s$  in  $L^\infty(\mathbb{R}^n) \cap \mathcal{P}(\mathbb{R}^n)$ that are not identical to each other up to a translation. By the same argument as in the first two paragraphs of the proof of Theorem~\ref{thm_gf},  (note that the gradient flow structure was not used in these paragraphs), at least one of the following inequalities must be true:
\begin{equation}\label{two_eq2}
\lim_{t\to 0^+} \frac{\mathcal{E}[\rho_0^s] - \mathcal{E}[\rho_t]}{t} \in (0,+\infty] \quad\text{ or }\quad \lim_{t\to 1^-} \frac{\mathcal{E}[\rho_1^s] - \mathcal{E}[\rho_t]}{1-t}  \in (0,+\infty] .
\end{equation}
By Proposition~\ref{prop:general}, we know the first equation in \eqref{two_eq2} must be false. Likewise, by considering the same interpolation curve with $\rho_0$ and $\rho_1$ interchanged, Proposition~\ref{prop:general} gives that the second equation in \eqref{two_eq2} must also be false, leading to a contradiction.
\end{proof}

\section{Regularity and convexity along the interpolation curve}

\subsection{Lipschitz property of the interpolation curve}

\label{subsec_lipschitz}
Assume that $\rho_0, \rho_1 \in \mathcal{P}(\mathbb{R}^n) \cap L^\infty(\R^n)$ are two radially decreasing compactly supported probability densities. In this subsection, we prove Proposition \ref{prop_lipschitz}, which gives that $\{\rho_t\}_{t\in[0,1]}$ is a Lipschitz curve with respect to the 2-Wasserstein distance. We start with a simple lemma that gives some additional properties of the interpolation curve $\rho_t$.
\begin{lemma}\label{lem_supp}
Let $\rho_0, \rho_1 \in \mathcal{P}(\R^n) \cap L^\infty(\mathbb{R}^n)$ be radially decreasing and compactly supported. Consider the interpolation curve $\{\rho_t\}_{t\in[0,1]}$ defined by \eqref{h_t} and \eqref{rho_t}. Then we have the following:
\begin{itemize}
\item[(a)] Denote $\supp \rho_i =: B_{R_i}$ for $i=0,1$. Then $\rho_t$ is compactly supported for all $t\in(0,1)$ with
\begin{equation}\label{rho_t_supp}
\supp \rho_t = B_{R_t} \quad\text{ for all }t\in (0,1),
\end{equation}
where 
\begin{equation}\label{def_Rt}
R_t := ((1-t) R_0^{-n} + t R_1^{-n})^{-1/n}\quad\text{ for }t\in (0,1).
\end{equation} In particular, note that $R_t \leq \max\{R_0, R_1\}$ for $t\in [0,1]$.

\smallskip

\item[(b)]
If $\rho_0$ and $\rho_1$ are both strictly radially decreasing within their supports, then so is $\rho_t$ for all $t\in(0,1)$.

\smallskip
\item[(c)]
If $\rho_0$ and $\rho_1$ are both in $C(\mathbb{R}^n)$, then so is $\rho_t$ for all $t\in(0,1)$.

\end{itemize}
\end{lemma}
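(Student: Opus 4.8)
The plan is to reduce every assertion to a property of the single nondecreasing function $h_t'$, via the following reformulation of \eqref{rho_t}. Since $h_0,h_1$ are convex (Lemma~\ref{lemma_h}(a)), so is $h_t=(1-t)h_0+th_1$, hence its right-derivative $h_t'$ --- which agrees a.e.\ with $(1-t)h_0'+th_1'$, lies in $L^1(0,1)$, and satisfies $\int_0^1 h_t'=(1-t)\|\rho_0\|_\infty+t\|\rho_1\|_\infty$ by Lemma~\ref{lemma_h}(a) --- is nondecreasing. Rewriting $\chi_{(c_nh_t'(s))^{-1/n}}(x)=\mathbf 1\{h_t'(s)<c_n^{-1}|x|^{-n}\}$, formula \eqref{rho_t} becomes $\rho_t(x)=G_t\!\left(c_n^{-1}|x|^{-n}\right)$, where
\[
G_t(v):=\int_{\{s\in(0,1)\,:\,h_t'(s)<v\}}h_t'(s)\,ds\qquad(0<v\le+\infty)
\]
is nondecreasing, left-continuous, satisfies $G_t(v^+)-G_t(v)=v\,|\{s:h_t'(s)=v\}|$, and has $G_t(+\infty)=\int_0^1 h_t'$. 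In particular $\rho_t$ is radially decreasing, $\rho_t(0)=G_t(+\infty)=\|\rho_t\|_\infty<\infty$, and by Tonelli $\int_{\R^n}\rho_t\,dx=1$. Parts (a), (b), (c) then correspond, respectively, to the value of $\lim_{s\to0^+}h_t'(s)$, to continuity of $h_t'$, and to $h_t'$ having no nondegenerate interval of constancy.

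For (a), I would first note that since $\rho_i$ is compactly supported, Lemma~\ref{lemma_h}(b) gives $\lim_{s\to0^+}h_i'(s)=|\supp\rho_i|^{-1}=(c_nR_i^n)^{-1}$; by monotonicity $h_t'$ has a limit at $0^+$, equal to the convex combination $(1-t)(c_nR_0^n)^{-1}+t(c_nR_1^n)^{-1}=c_n^{-1}R_t^{-n}$ with $R_t$ as in \eqref{def_Rt}. Since $h_t'$ is nondecreasing with infimum $c_n^{-1}R_t^{-n}$, the set $\{s:h_t'(s)<v\}$ is empty for $v\le c_n^{-1}R_t^{-n}$ and of positive measure for $v>c_n^{-1}R_t^{-n}$, so $G_t(v)=0$ in the first case and $G_t(v)>0$ in the second; through $v=c_n^{-1}|x|^{-n}$ this says exactly $\rho_t(x)=0$ for $|x|\ge R_t$ and $\rho_t(x)>0$ for $|x|<R_t$, which is \eqref{rho_t_supp}. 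Finally $R_t^{-n}\ge\min\{R_0^{-n},R_1^{-n}\}$ gives $R_t\le\max\{R_0,R_1\}$.

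For (b), if $\rho_i$ is strictly radially decreasing within its support then Lemma~\ref{lemma_h}(d) gives $h_i\in C^1((0,1))$ with $\lim_{s\to1^-}h_i'(s)=+\infty$; hence $h_t'=(1-t)h_0'+th_1'$ is continuous and nondecreasing on $(0,1)$ with $\lim_{s\to0^+}h_t'=c_n^{-1}R_t^{-n}$ and $\lim_{s\to1^-}h_t'=+\infty$. For any $c_n^{-1}R_t^{-n}<v_2<v_1<+\infty$, both $v_1,v_2$ lie in the range of the continuous map $h_t'$, so the intermediate value theorem makes $\{s:v_2<h_t'(s)<v_1\}$ a nonempty open set; therefore $|\{s:v_2\le h_t'(s)<v_1\}|>0$ and
\[
G_t(v_1)-G_t(v_2)=\int_{\{s\,:\,v_2\le h_t'(s)<v_1\}}h_t'(s)\,ds\ \ge\ v_2\,|\{s:v_2\le h_t'(s)<v_1\}|\ >\ 0 .
\]
Taking $v_i=c_n^{-1}|x_i|^{-n}$ with $0<|x_1|<|x_2|<R_t$ yields $\rho_t(x_1)>\rho_t(x_2)$, i.e.\ $\rho_t$ is strictly radially decreasing on $B_{R_t}=\supp\rho_t$.

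Part (c) is the only point that I expect to require real care. Writing $r=|x|$ and tracking the one-sided limits in $\rho_t(x)=G_t(c_n^{-1}r^{-n})$ shows that the radial profile of $\rho_t$ jumps at a radius $r_0>0$ by exactly $c_n^{-1}r_0^{-n}\,|\{s:h_t'(s)=c_n^{-1}r_0^{-n}\}|$, while continuity at $r_0=0$ is automatic (since $G_t(v)\to\int_0^1 h_t'=\rho_t(0)<\infty$ as $v\to+\infty$, by monotone convergence) and $\rho_t$ vanishes for $r_0>R_t$. Applying the same jump formula to $\rho_0$ and $\rho_1$ themselves: continuity of $\rho_i\in C(\R^n)$ forces $|\{s:h_i'(s)=v\}|=0$ for every $v>0$, i.e.\ neither $h_0'$ nor $h_1'$ is constant on any nondegenerate interval. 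As $h_0',h_1'$ are nondecreasing, $(1-t)h_0'+th_1'$ can be constant on an interval $J\subset(0,1)$ only if both $h_0'$ and $h_1'$ are (for $t\in(0,1)$ the two nonnegative increments across $J$ must separately vanish); this cannot happen, so $|\{s:h_t'(s)=v\}|=0$ for all $v>0$, every jump of the radial profile of $\rho_t$ vanishes, and $\rho_t\in C(\R^n)$. The main obstacle throughout is really the bookkeeping in the first paragraph: once the representation $\rho_t=G_t(c_n^{-1}|x|^{-n})$ together with its jump identity is in place, (a)--(c) are short.
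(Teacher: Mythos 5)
Your proof is correct and follows essentially the same route as the paper's: in each part it reduces the assertion to a property of $h_t'$ — the value of $\lim_{s\to0^+}h_t'$ for (a), continuity of $h_t'$ (via Lemma~\ref{lemma_h}(d)) for (b), strict monotonicity of $h_t'$ for (c) — which is exactly what the paper does. Your explicit representation $\rho_t(x)=G_t\bigl(c_n^{-1}|x|^{-n}\bigr)$ with the jump identity $G_t(v^+)-G_t(v)=v\,|\{h_t'=v\}|$ is a tidy, self-contained way to organize the same argument, where the paper instead appeals to \eqref{eq_h_der} applied to $\rho_t$.
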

\begin{proof}Let us prove (a) first.
 Using the definition of $h_t$ in \eqref{h_t} and Lemma~\ref{lemma_h}(b), we have
\[
\begin{split}
\lim_{s\to0^+} h_t'(s) &= \lim_{s\to0^+} ((1-t) h_0'(s) + t h_1'(s))\\
& = (1-t) |\supp \rho_0|^{-1} + t |\supp \rho_1|^{-1}\\& = c_n^{-1}((1-t)R_0^{-n}+tR_1^{-n}),
\end{split}
\]
thus Lemma~\ref{lemma_h}(b) implies that
\[
|\supp \rho_t| = \left(\lim_{s\to0^+} h_t'(s)\right)^{-1} = c_n ((1-t)  R_0^{-n} + t R_1^{-n})^{-1} = c_n R_t^n,
\]
where $R_t$ is as defined in \eqref{def_Rt}.
Since $\rho_t$ is radially decreasing for all $t\in(0,1)$, the support of $\rho_t$ must be a ball centered at the origin, thus the above equality is equivalent with \eqref{rho_t_supp}.

Next we prove (b). Let us take any fixed $t\in(0,1)$. By Lemma~\ref{lemma_h}(d), we have that $h_i \in C^1((0,1))$ for $i=0,1$, thus it follows that $h_t \in C^1((0,1))$. By \eqref{eq_h_der}, we have that $|\{\rho>h_t(s)\}|$ is continuous in $s$ for $s\in(0,1)$. Combining with the fact that $h_t(s)$ is continuous and strictly increasing in $s$ (see Lemma~\ref{lemma_h}(a)), we have that $|\{\rho_t>h\}|$ is continuous in $h$ for $h\in(0,\|\rho_t\|_\infty)$, thus $\rho_t$ is strictly radially decreasing.

Finally, to prove (c), note that since both $\rho_0$ and $\rho_1$ are continuous, $h_0'(s)$ and $h_1'(s)$ must be both strictly increasing in $s$ in $(0,1)$ due to \eqref{eq_h_der}. As a result, for any $t\in(0,1)$, $h_t'(s)$ is also strictly increasing for $s\in(0,1)$. This shows that $\rho_t$ must be continuous: if $\rho_{t_0}$ is discontinuous for some $t_0\in(0,1)$, then $\rho_{t_0}(r)$ must have a jump discontinuity somewhere, thus $h_{t_0}'(s)$ must be a constant in some interval, a contradiction. 
\end{proof}

To understand the regularity properties of the interpolation curve, we will find a vector field $V:\mathbb{R}^n \times (0,1) \to \mathbb{R}^n$ such that
\begin{equation}\label{continuity}
\partial_t\rho_t(x)+\nabla \cdot(V(x,t) \rho_t(x))=0 \quad\text{ for  }t\in(0,1)\text{ and } x\in\mathbb{R}^n
\end{equation}
in the weak sense. Since $\rho_t$ is radial for $t\in[0,1]$, we can restrict ourselves to vector fields that have the form $V(x,t)=v(|x|,t)\frac{x}{|x|}$. Note that for a fixed $r>0$, $v(r,t)$ is related to $\frac{d}{dt} \int_{B_r} \rho_t(x)\,dx$ by the identity
\begin{equation}\label{41}
\frac{d}{dt} \int_{B_r} \rho_t(x)\,dx=-\int_{B_r} \nabla\cdot(V(x,t)\rho_t)\,dx= -\int_{\partial B_r}v(r,t)\rho_t(r)\,dS=-\omega_n r^{n-1}v(r,t)\rho_t(r).
\end{equation}
To determine $v(r,t)$, we will use another way to compute $\frac{d}{dt} \int_{B_r} \rho_t(x)\,dx$. \eqref{h_rho} yields that
\begin{equation}\label{temp_rho_r}
\begin{array}{rcl}
\ds \int_{B_r}\rho_t(x)\,dx&=&\ds \int_{B_r} \int_0^1\chi_{(c_n h_t'(s))^{-1/n}}(x) h_t'(s)\,ds dx\\[0.3cm]
&=&\ds \int_{0}^1\min\left\{c_n r^n,(h_t'(s))^{-1}\right\}  h_t'(s) \,ds.
\end{array}
\end{equation}
By Lemma~\ref{lemma_stat}(b,e) and Lemma~\ref{lem_supp}(b,c), for any $t\in[0,1]$ we have $\rho_t \in C(\mathbb{R}^n)$, and is strictly radially decreasing with its support. Thus for any $t\in[0,1]$ and $r \in (0, (c_n^{-1} |\supp \rho_t|)^{1/n})$, there exists a unique $s_{r,t}$ that satisfies the implicit equation
\begin{equation}\label{def_srt}
h_t'(s_{r,t})=(c_n r^n)^{-1}.
\end{equation}
 The definition of $s_{r,t}$ is illustrated in Figure~\ref{fig_srt}.
 By continuity of $\rho_t$, we know $s_{r,t}$ also satisfies
\begin{equation}\label{def_srt2}
\rho_t(r)=h_t(s_{r,t}).
\end{equation}

\begin{figure}[h!]
\begin{center}
\includegraphics[scale=0.9]{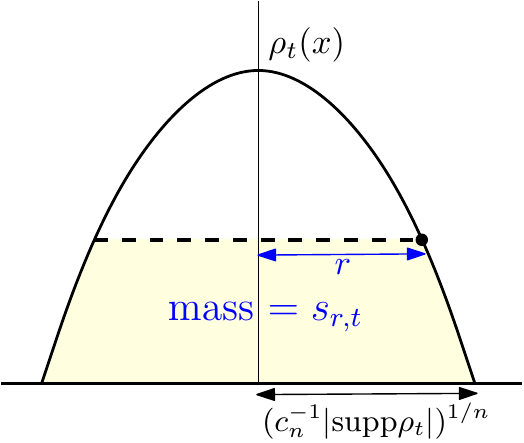}
\end{center}
\caption{Illustration of the definition of  $s_{r,t}$. \label{fig_srt}}
\end{figure}

Using \eqref{def_srt}, we can rewrite \eqref{temp_rho_r} as
\[
\int_{B_r}\rho_t(x)\,dx = \int_0^{s_{r,t}} c_n r^n h_t'(s) ds +  \int_{s_{r,t}}^1  ds = c_n r^n h_t(s_{r,t}) +(1-s_{r,t}).
\]

Differentiating in $t$, we obtain that
\[
\begin{split}
\frac{d}{dt}\int_{B_r} \rho_t(x)\,dx& =c_n r^n (\partial_t h_t)(s_{r,t}) + \underbrace{c_n r^n h'_t(s_{r,t})}_{=1} \partial_t s_{r,t} - \partial_t s_{r,t}\\
&= c_n r^n (h_1-h_0)(s_{r,t}).
\end{split}
\]
Combining this with \eqref{41} and \eqref{def_srt2} gives the following explicit expression of $v(r,t)$ (recall that $c_n = \omega_n / n$):
\begin{equation}\label{def_v}
v(r,t)=\frac{r (h_0-h_1)(s_{r,t})}{n \rho_t(r)}=\frac{r (h_0-h_1)(s_{r,t})}{nh_t(s_{r,t})} \quad\text{ for all }r \in (0, (c_n^{-1} |\supp \rho_t|)^{1/n}).
\end{equation}
With $v(r,t)$ explicitly given as above,  we are now ready to prove Proposition~\ref{prop_lipschitz}.

\begin{proof}[\textup{\textbf{Proof of Proposition~\ref{prop_lipschitz}}}]
From the above computation, the interpolation curve $\{\rho_t\}_{t\in[0,1]}$ satisfies the continuity equation \eqref{continuity} with $V(x,t) = v(|x|, t)\frac{x}{|x|}$, where $v(r,t)$ is given by \eqref{def_v}. We claim that such $V(x,t)$ satisfies the estimate
\begin{equation}\label{v_bd}
|V(x,t)| = v(|x|, t) \leq C|x| \quad \text{  for all } t\in (0,1) \text{ and } x\in \supp \rho_t,\end{equation} where $C<\infty$ depends only on $\rho_0$ and $\rho_1$, and is independent of $x$ and $t$.

In order to obtain \eqref{v_bd}, due to the explicit formula \eqref{def_v} for $v(r,t)$, it suffices to control $\frac{|(h_0-h_1)(s_{r,t})|}{h_t(s_{r,t})}$. For any $s\in(0,1)$, using that $h_t(s) \geq \min\{h_0(s), h_1(s)\}$, we have
\[
\frac{|(h_0-h_1)(s)|}{h_t(s)} \leq \frac{\max\{h_0(s),h_1(s)\}}{\min\{h_0(s),h_1(s)\}} \leq \max\left\{ \frac{h_0(s)}{h_1(s)}, \frac{h_1(s)}{h_0(s)}\right\}.
\]

For the fraction $ \frac{h_1(s)}{h_0(s)}$, by L'Hopital's rule and Lemma~\ref{lemma_h}(b), we have
$$
\lim_{s\to 0^+}\frac{h_1(s)}{h_0(s)}=\lim_{s\to 0^+}\frac{ h_1'(s)}{h_0'(s)}=\frac{|\{\rho_1>0\}|}{|\{\rho_0>0\}|} < \infty,
$$
where in the last inequality we used that $\rho_1$ has compact support. Also, 
\[
\lim_{s\to 1^-}\frac{h_1(s)}{h_0(s)} = \frac{\|\rho_1\|_\infty}{\|\rho_0\|_\infty} < \infty.
\] The continuity of $h_0$ and $h_1$ in $(0,1)$ then yields that $\sup_{s\in(0,1)} \frac{h_1}{h_0} < \infty$. An identical argument gives $\sup_{s\in(0,1)} \frac{h_0}{h_1} < \infty$. Thus 
\[
\sup_{s\in(0,1)} \frac{|(h_0-h_1)(s)|}{h_t(s)} < C
\] for some $C<\infty$ only depending on $\rho_0$ and $\rho_1$, finishing the proof of \eqref{v_bd}.

Once we obtain \eqref{v_bd},
using the Benamou-Brenier  representation of the 2-Wasserstein distance \cite[Proposition 1.1]{BenamouBrenier}, we have that for every $0\leq t_1<t_2\leq 1$, 
\begin{equation}\label{eq:Lipschitzproperty}
d_2(\rho_{t_1},\rho_{t_2})\le  \left(\sup_{t\in[t_1,t_2]} \int_{\R^n} |V(x,t)|^2\,d\rho_{t}(x)\right)^{1/2} |t_2-t_1| \leq C \max\{R_0, R_1\} |t_2 - t_1|,
\end{equation}
where the last inequality follows from \eqref{v_bd} and Lemma~\ref{lem_supp}(a). Thus 
\eqref{eq:Lipschitzproperty} is the desired Lipschitz property of the interpolation curve.

Next we aim to prove  \eqref{eq:continuous}. The fact that $\{\rho_t\}_{t\in[0,1]}$ is a Lipschitz curve in 2-Wasserstein distance implies that $\rho_t$ is weakly continuous for $t\in[0,1]$ \cite[Theorem 5.10]{Santambrogio}. Using that $\{\rho_t\}_{t\in[0,1]}$ is uniformly bounded in $L^\infty$ and radially decreasing, we also have that $\rho_t\in L^\infty_t([0,1];BV_x)$, where $BV$ denotes functions of bounded variations. Combining the compactness of $BV$ in $L^1$ and weak continuity of $\rho_t$, we have that $\rho_t\in C_t([0,1]; L^1(\mathbb{R}^n))$. Finally, using that $\rho_t\in L^\infty_{t,x}$,  an interpolation argument shows that
\begin{equation}\label{eq:Lpcontinuity}
    \rho_t\in C_t([0,1];L^p(\R^n))\qquad\mbox{for any $1\le p<\infty$},
\end{equation}
and in particular taking $p=m$ gives that $\mathcal{S}[\rho_t] \in C([0,1])$. Also, since the assumption (W2) implies that $W\in L^q_{loc}(\mathbb{R}^n)$ for some $q>1$, this fact and \eqref{eq:Lpcontinuity} yield that $\mathcal{I}[\rho_t] \in C([0,1])$, and putting the two parts together gives \eqref{eq:continuous}.
\end{proof}

\subsection{Further Regularity}\label{sec:further_reg}

In this subsection, our goal is to show: 
\[
\lim_{t\to 0^+} \frac{\mathcal{E}[\rho_0^s] - \mathcal{E}[\rho_t]}{t} = 0 \quad\text{ and }\quad \lim_{t\to 1^-} \frac{\mathcal{E}[\rho_1^s] - \mathcal{E}[\rho_t]}{1-t} = 0,
\]
without using the gradient flow structure of \eqref{eq:evolution}. To do so, we need to establish some further regularity properties of the interpolation curve in Lemma~\ref{lem:furtherregularity}. Let us start with a simple lemma, saying that although $h_t'(s)\to +\infty$ as $s\to 1^-$ (as given in Lemma~\ref{lemma_h}(d)), the singularity power is indeed the same for all $t\in[0,1]$, as long as $\rho_0$ and $\rho_1$ are both non-degenerate near the origin.

\begin{lemma}\label{lem:nondegeneracy}
Let $\rho_0, \rho_1 \in \mathcal{P}(\R^n) \cap L^\infty(\mathbb{R}^n)$ be strictly radially decreasing and $C^2$ in the interior of their support, and assume they both satisfy the non-degeneracy condition $\Delta \rho_0(0)<0$ and  $\Delta \rho_1(0)<0$. Then there exist $c,C\in (0,\infty)$ and $\bar s\in(0,1)$ that only depend on $\rho_0, \rho_1$, such that
\begin{equation}\label{h'_ineq}
   c(1-s)^{-\frac{n}{n+2}} \le h_t'(s)\le C (1-s)^{-\frac{n}{n+2}} \quad\text{ for all }t\in[0,1] \text{ and }s\in(\bar s, 1).
\end{equation}
\end{lemma}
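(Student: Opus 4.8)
The plan is to translate the non-degeneracy condition $\Delta\rho_i(0)<0$ into a precise two-sided estimate for $\rho_i(r)$ near its maximum, then pass this estimate through the definition of the height function to control $h_i'(s)$ as $s\to 1^-$, and finally use the linearity $h_t' = (1-t)h_0' + t h_1'$ to conclude. First I would note that each $\rho_i$ is $C^2$ and strictly radially decreasing on its support with $\nabla\rho_i(0)=0$, so Taylor expansion gives $\rho_i(r) = \|\rho_i\|_\infty + \tfrac{1}{2n}\Delta\rho_i(0)\, r^2 + o(r^2)$ as $r\to 0^+$ (using that for a radial function $\Delta\rho_i(0) = n\,\rho_i''(0)$). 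Since $\Delta\rho_i(0)<0$, there are constants $0<a_i\le b_i<\infty$ and $r_i>0$ so that $a_i r^2 \le \|\rho_i\|_\infty - \rho_i(r) \le b_i r^2$ for $r\in(0,r_i)$. Equivalently, writing $h = \|\rho_i\|_\infty - \delta$ for small $\delta>0$, the superlevel set $\{\rho_i > h\}$ is a ball of radius comparable to $\delta^{1/2}$, so $|\{\rho_i > \|\rho_i\|_\infty - \delta\}|$ is comparable to $\delta^{n/2}$, with constants depending only on $\rho_i$.

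Next I would convert this into a statement about $h_i'$. By Lemma~\ref{lemma_h}(a) (or rather its refinement in part (d), which applies since $\rho_i$ is strictly radially decreasing on its support), $h_i \in C^1((0,1))$ and $h_i'(s) = |\{\rho_i > h_i(s)\}|^{-1}$. Writing $\delta_i(s) := \|\rho_i\|_\infty - h_i(s)$, the level-set volume comparison above gives $h_i'(s)$ comparable to $\delta_i(s)^{-n/2}$ once $s$ is close enough to $1$ that $h_i(s) > \|\rho_i\|_\infty - \min\{c_n r_i^n \cdot(\text{const})\}$. It remains to relate $\delta_i(s)$ to $1-s$. For this I would integrate: $1 - s = \int_{\mathbb{R}^n}(\rho_i(x) - \min\{\rho_i(x),h_i(s)\})\,dx = \int_{\{\rho_i > h_i(s)\}}(\rho_i - h_i(s))\,dx$, and since on this ball of radius $\sim \delta_i(s)^{1/2}$ the integrand $\rho_i - h_i(s)$ is between $0$ and $\delta_i(s)$ and in fact comparable to $\delta_i(s)$ on a fixed fraction of the ball, one gets $1-s$ comparable to $\delta_i(s) \cdot \delta_i(s)^{n/2} = \delta_i(s)^{(n+2)/2}$. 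Inverting, $\delta_i(s)$ is comparable to $(1-s)^{2/(n+2)}$, hence $h_i'(s) \sim \delta_i(s)^{-n/2} \sim (1-s)^{-n/(n+2)}$, with all constants depending only on $\rho_i$. This yields \eqref{h'_ineq} for $i=0$ and $i=1$ separately on some $(\bar s_i, 1)$.

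Finally, since $h_t'(s) = (1-t)h_0'(s) + t h_1'(s)$ is a convex combination with $t\in[0,1]$, the two-sided bound for $h_0'$ and $h_1'$ on $(\bar s, 1)$ with $\bar s := \max\{\bar s_0,\bar s_1\}$ immediately gives $c(1-s)^{-n/(n+2)} \le h_t'(s) \le C(1-s)^{-n/(n+2)}$ uniformly in $t$, with $c := \min\{c_0,c_1\}$ and $C := \max\{C_0,C_1\}$ the constants from the two individual estimates. The main obstacle I anticipate is the careful bookkeeping in the step relating $1-s$ to $\delta_i(s)$: one needs the \emph{lower} bound $1-s \gtrsim \delta_i(s)^{(n+2)/2}$, which requires knowing that $\rho_i - h_i(s)$ stays bounded below by a fixed multiple of $\delta_i(s)$ on a ball of radius a fixed fraction of $\delta_i(s)^{1/2}$ — this is exactly where the quadratic \emph{lower} bound $\|\rho_i\|_\infty - \rho_i(r) \le b_i r^2$ (equivalently the finiteness, not just negativity, controlling the Hessian from one side) is used, together with the quadratic upper bound for the matching estimate; everything else is a routine change of variables in the radial integral.
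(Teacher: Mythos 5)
Your proposal is correct, but takes a genuinely different route from the paper's. The paper proves the bound by differentiating the relation $\rho_0\big((c_n h_0'(s))^{-1/n}\big) = h_0(s)$ (valid since $\rho_0$ is strictly decreasing) to obtain the exact identity
\[
h_0''(s)=-n c_n^{1/n}\,\frac{h_0'(s)^{2+\frac{1}{n}}}{\rho_0'\big((c_n h_0'(s))^{-1/n}\big)},
\]
then uses $C^2$ regularity and non-degeneracy to trap $-\rho_0'(r)$ between $c_0 r$ and $C_0 r$ near the origin, yielding a two-sided differential inequality of the form $\mathrm{const}\le h_0'(s)^{-2-\frac{2}{n}}h_0''(s)\le\mathrm{const}$, which it integrates from $s$ to $1$ (using $h_0'(1^-)=+\infty$). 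You instead encode the non-degeneracy and $C^2$ regularity directly as a two-sided quadratic comparison $a_i r^2\le \|\rho_i\|_\infty-\rho_i(r)\le b_i r^2$, pass to level-set volumes $|\{\rho_i>\|\rho_i\|_\infty-\delta\}|\asymp \delta^{n/2}$, relate $1-s$ to $\delta_i(s):=\|\rho_i\|_\infty-h_i(s)$ via the integral identity $1-s=\int_{\{\rho_i>h_i(s)\}}(\rho_i-h_i(s))\,dx\asymp\delta_i(s)^{(n+2)/2}$, and then read off $h_i'(s)=|\{\rho_i>h_i(s)\}|^{-1}\asymp\delta_i(s)^{-n/2}$. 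The inputs are the same (boundedness and strict negativity of the Hessian at the origin), and both conclude via the linearity $h_t'=(1-t)h_0'+t h_1'$. The paper's integration of a Bernoulli-type inequality is shorter once the ODE identity is in hand; your level-set approach is more elementary and makes the origin of the exponent $n/(n+2)$ transparent, since it falls out as $\frac{n/2}{(n+2)/2}$. One cosmetic caveat: in your final remark you label $\|\rho_i\|_\infty-\rho_i(r)\le b_i r^2$ a ``quadratic lower bound,'' but this upper bound on the gap comes from $C^2$ boundedness (it controls the level set from below and the integrand from below), while the lower bound $\ge a_i r^2$ comes from non-degeneracy (it confines the level set and bounds $1-s$ from above); both are needed, as your argument in fact correctly uses.
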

\begin{proof} It suffices to prove \eqref{h'_ineq} for $t=0,1$, since the general result for $t\in[0,1]$ directly follows from the interpolation $h_t'(s) = (1-t)h_0'(s)+th_1'(s)$. From now on we focus on $\rho_0$, and denote $\rho_0(r):[0,R_0]\to\mathbb{R}$ as the function $\rho_0$ in the radial variable $r$. Since $\rho_0$ is strictly decreasing in $r$ for $0<r<R_0$, \eqref{h_rho} leads to 
\[\rho_0((c_n h_0'(s))^{-1/n}) = h_0(s)\quad\text{ for all }s\in(0,1),
\]
and taking its derivative gives
\begin{equation}\label{hstemp}
 \rho_0'((c_n h_0'(s))^{-1/n})  h_0''(s) = -n c_n^{1/n} h_0'(s)^{2+\frac{1}{n}}  \quad\text{ for all }s\in(0,1).
\end{equation}
By the non-degeneracy condition $\Delta\rho_0(0)<0$, we have that $ \rho_0'(r)<0$ for $0<r\ll 1$, thus $ \rho_0'((c_n h_0'(s))^{-1/n})<0$ for $s\in(0,1)$ sufficiently close to 1. This gives
\begin{equation}\label{h''}
h_0''(s)=-nc_n^{1/n}\frac{h_0'(s)^{2+\frac{1}{n}}}{ \ds \rho_0'\big(\big(c_n h_0'(s))^{-1/n}\big)}> 0 \quad\text{ for $s\in(0,1)$ sufficiently close to 1.}
\end{equation}

Since $\rho_0$ is $C^2$ around the origin with $\nabla\rho_0(0)=0$, there exists some finite $C_0>0$ such that  $\rho_0'(r)\ge -C_0 r$ for $0<r\ll 1$. On the other hand, by the non-degeneracy condition  $\Delta\rho_0(0)<0$, there exists some $c_0>0$ such that $\rho_0'(r)\leq -c_0r$ for $0<r\ll 1$. Plugging these into \eqref{h''}, we know there exists some $s_0\in(0,1)$ such that
$$
\frac{n c_n^{1/n}}{C_0} \leq  \big(h_0'(s)\big)^{-2-\frac{2}{n}} h_0''(s)\le \frac{n c_n^{1/n}}{c_0} \quad\text{ for all }s\in(s_0,1).
$$
Integrating this differential inequality gives \eqref{h'_ineq} for $t=0$ for all $s\in(s_0,1)$. An identical argument can treat the $t=1$ case for all $s\in(s_1,1)$ (where $s_1\in(0,1)$), thus by the interpolation $h_t'(s) = (1-t)h_0'(s)+th_1'(s)$ we obtain \eqref{h'_ineq} for all $t\in[0,1]$ and $s\in(\bar s, 1)$, where $\bar s := \max\{s_0,s_1\} \in (0,1)$. 
\end{proof}

The next lemma deals with regularity of the time derivative $\partial_t \rho_t$.

\begin{lemma}\label{lem:furtherregularity}
Let $\rho_0$ and $\rho_1\in \mathcal{P}(\R^n) \cap L^\infty(\mathbb{R}^n)$ satisfy the assumptions in Lemma~\ref{lem:nondegeneracy}. Let the interpolation curve $\{\rho_t\}_{t\in[0,1]}$ be defined by \eqref{h_t} and \eqref{rho_t}, and consider the vector field $V(x,t):\mathbb{R}^n\times(0,1)\to\mathbb{R}^n$ given in \eqref{continuity} and \eqref{def_v}. Then we have
\begin{equation}\label{reg1}
\partial_t\rho_t=-\nabla\cdot(V\rho)\in C_t([0,1];\mathcal{M}),
\end{equation}
where $\mathcal{M}$ is the space of signed Radon measures.
Here we say $f:\mathbb{R}^n\times[0,1]$ is in $C_t([0,1];\mathcal{M})$ if for any $\phi\in C(\mathbb{R}^d)$, the integral $ \int_{\mathbb{R}^n} f(x,t) \phi(x) dx$ is continuous in $[0,1]$.

\end{lemma}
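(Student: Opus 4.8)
The plan is to prove \eqref{reg1} by establishing three facts: that $V\rho_t$ is a well-defined $L^1$ (indeed $L^\infty$-in-time into $\mathcal M$) vector field on $[0,1]$, that $t\mapsto V(\cdot,t)\rho_t$ is continuous in a weak sense strong enough to pass the divergence through, and that the distributional identity $\partial_t\rho_t=-\nabla\cdot(V\rho_t)$ holds (this last being already recorded in \eqref{continuity}). The key quantitative input is the explicit formula \eqref{def_v}: $v(r,t)\rho_t(r)=\tfrac{r}{n}(h_0-h_1)(s_{r,t})$, so that the momentum density $V\rho_t$ is $\tfrac{x}{n}\,(h_0-h_1)(s_{|x|,t})$ on $\supp\rho_t$ and $0$ outside. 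Since $h_0,h_1$ are continuous and bounded on $(0,1)$ away from $s=1$, and since by Lemma~\ref{lem:nondegeneracy} the singular behavior $h_t'(s)\sim(1-s)^{-n/(n+2)}$ is uniform in $t$, the quantity $(h_0-h_1)(s_{|x|,t})$ stays bounded on $\supp\rho_t$ uniformly in $t$. (Near $|x|=0$ one has $s_{|x|,t}\to1$, where $h_0(s)-h_1(s)\to\|\rho_0\|_\infty-\|\rho_1\|_\infty$, a finite constant.) Hence $V\rho_t\in L^\infty(\mathbb R^n)$ with a bound uniform in $t\in[0,1]$, and since all $\rho_t$ are supported in the fixed ball $B_{\max\{R_0,R_1\}}$ by Lemma~\ref{lem_supp}(a), $V\rho_t$ is a bounded family in $L^1\cap L^\infty$, in particular in $\mathcal M$.

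Next I would prove continuity of $t\mapsto V\rho_t$ in $L^1(\mathbb R^n)$, which a fortiori gives continuity into $\mathcal M$ and hence \eqref{reg1} since $\nabla\cdot$ is weak-$*$ continuous on the relevant space. Writing the momentum density as $\tfrac{x}{n}(h_0-h_1)(s_{|x|,t})1_{B_{R_t}}(x)$, it suffices to show $s_{|x|,t}$ depends continuously on $(|x|,t)$ — more precisely, that $(t,r)\mapsto s_{r,t}$ is continuous on the closed region $\{(t,r): 0\le t\le 1,\ 0\le r\le R_t\}$, with the convention $s_{0,t}=1$ and $s_{R_t,t}=0$. This follows from the implicit equation \eqref{def_srt}, $h_t'(s_{r,t})=(c_n r^n)^{-1}$: for each fixed $t$, $s\mapsto h_t'(s)$ is continuous and strictly increasing from $|\supp\rho_t|^{-1}$ (at $s=0$) to $+\infty$ (at $s=1$) by Lemma~\ref{lemma_h}(a,b,d), so it has a continuous inverse; and joint continuity in $t$ follows because $h_t'(s)=(1-t)h_0'(s)+th_1'(s)$ is jointly continuous on $[0,1]\times(0,1)$ and, thanks to the uniform two-sided bounds of Lemma~\ref{lem:nondegeneracy}, a standard equicontinuity/monotone-inverse argument upgrades pointwise to uniform control near $s=1$. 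Composing with the continuous bounded function $s\mapsto(h_0-h_1)(s)$ (extended continuously to $s=1$) and multiplying by $\tfrac{x}{n}1_{B_{R_t}}(x)$, where $R_t$ depends continuously on $t$, dominated convergence on the fixed ball $B_{\max\{R_0,R_1\}}$ gives $V\rho_{t_k}\to V\rho_t$ in $L^1$ whenever $t_k\to t$.

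Finally, testing against $\phi\in C_c^\infty(\mathbb R^n)$ and invoking \eqref{continuity} gives $\int\partial_t\rho_t\,\phi\,dx=\int V\rho_t\cdot\nabla\phi\,dx$, whose right side is continuous in $t$ by the $L^1$-continuity just established; a density argument in $\phi$ then yields \eqref{reg1} in the stated sense. \textbf{The main obstacle} is the control near the two degenerate ends of the curve: near $r=0$ (equivalently $s\to1^-$), where $h_t'$ blows up, one must be sure the blow-up rate is the \emph{same} for $t=0$ and $t=1$ so that $s_{r,t}$ and hence $(h_0-h_1)(s_{r,t})$ stay controlled uniformly in $t$ — this is exactly what Lemma~\ref{lem:nondegeneracy} supplies, using the non-degeneracy $\Delta\rho_i(0)<0$ from Lemma~\ref{prop:regularity}; and near $r=R_t$ (equivalently $s\to0^+$) one uses the compact support and Lemma~\ref{lemma_h}(b) to see $h_t'(0^+)$ is finite and bounded away from $0$ uniformly in $t$, so no issue arises there. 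Packaging these endpoint estimates into genuine joint continuity of $(t,r)\mapsto s_{r,t}$, rather than mere pointwise continuity, is the one place where care is required.
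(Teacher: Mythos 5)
Your route is genuinely different from the paper's, and it has a gap at the final step. The paper does not show $t\mapsto V\rho_t$ is $L^1$-continuous; instead it establishes two \emph{a priori} bounds, $\nabla\cdot(V\rho_t)\in L^\infty_t([0,1];L^1_x)$ and $\partial_t\nabla\cdot(V\rho_t)\in L^\infty_t([0,1];W^{-1,\infty}_x)$, and then invokes an Aubin--Lions compactness lemma to get $C_t([0,1];\mathcal{M})$. You, by contrast, try to get continuity of the divergence ``for free'' from $L^1$-continuity of the field $V\rho_t$. That inference fails as stated.

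The problem is in the sentence ``$L^1$-continuity [of $V\rho_t$] a fortiori gives continuity into $\mathcal{M}$ since $\nabla\cdot$ is weak-$*$ continuous,'' and the subsequent ``density argument in $\phi$.'' If $V\rho_{t'}\to V\rho_t$ in $L^1$, you get that $\int\nabla\cdot(V\rho_{t'})\,\phi\,dx=-\int V\rho_{t'}\cdot\nabla\phi\,dx$ is continuous for $\phi\in W^{1,\infty}$ (Lipschitz test functions), not for $\phi\in C(\mathbb{R}^n)$. To upgrade continuity from a dense class $C_c^\infty$ to all of $C$ in the pairing $\langle\nabla\cdot(V\rho_t),\phi\rangle$, the standard $\epsilon/3$ argument needs the uniform total-variation bound $\sup_{t\in[0,1]}\|\nabla\cdot(V\rho_t)\|_{\mathcal{M}}<\infty$ to control the error $\langle\nabla\cdot(V\rho_t),\phi-\phi_\delta\rangle$. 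Nothing in your argument supplies this: your $L^\infty\cap L^1$ bound on $V\rho_t$ does not imply its divergence is a bounded measure, let alone uniformly so (a bounded compactly supported vector field can have a divergence that is not a finite measure). The missing estimate is precisely the paper's first and hardest step, the explicit computation
\[
\nabla\cdot(V(\cdot,t)\rho_t)=(h_0-h_1)(s_{r,t})+\frac{r}{n}(h_0-h_1)'(s_{r,t})\,\partial_r s_{r,t},
\]
in which the second term is controlled via $\partial_r s_{r,t}=\partial_r\rho_t(r)/h_t'(s_{r,t})$ and the uniform comparability of $h_0',h_1',h_t'$ near $s=1$ from Lemma~\ref{lem:nondegeneracy}, giving the $t$-uniform bound $\int|\nabla\cdot(V\rho_t)|\,dx\le C(\rho_0,\rho_1)$. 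Without this ingredient (or some substitute), your density step does not close. If you add this bound, your argument would be salvageable without needing Aubin--Lions, so the overall strategy is recoverable, but as written it is incomplete.
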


\begin{proof}
Let us first prove that
\begin{equation}\label{eq_bd1}
 \nabla \cdot(V\rho)\in L^\infty_t([0,1]; L^1(\mathbb{R}^n)).
\end{equation}
For any $x\in \supp\rho_t$ (let us denote $r=|x|$), since $V(x,t) \rho_t(x) = v(r,t)\rho_t(r) \frac{x}{|x|}$,  \eqref{def_v} gives
\[
\begin{split}
\nabla \cdot( V(\cdot,t)\rho_t)&= \partial_r \big(v(r,t)\rho_t(r)\big) + \frac{n-1}{r} v(r,t)\rho_t(r)\\
&= (h_0-h_1)(s_{r,t})+  \frac{r}{n}(h_0-h_1)'(s_{r,t})\partial_r s_{r,t} =: T_1(r)+T_2(r),
\end{split}
\]
where $s_{r,t}$ is defined in \eqref{def_srt}. Since $\|h_i\|_\infty =\|\rho_i\|_\infty <\infty$ for $i=0,1$, we clearly have $|T_1|\leq \max\{\|\rho_0\|_\infty,\|\rho_1\|_\infty\}$. For $T_2$, differentiating \eqref{def_srt2} gives $\partial_r s_{r,t}=\frac{\partial_r\rho_t(r)}{h'_t(s_{r,t})},$ and plugging it into $T_2$ gives
\[
|T_2(r)| \leq \frac{r}{n}\partial_r \rho_t(r) \left|\frac{(h_1-h_0)'(s_{r,t})}{h_t'(s_{r,t})}\right|.\]
Note that for all $t\in[0,1]$ and $r\in(0,R_t)$, using  Lemma~\ref{lem:nondegeneracy} we have
\begin{equation}\label{bd_frac}
\left|\frac{(h_0-h_1)'(s_{r,t})}{h_t'(s_{r,t})}\right| \leq \sup_{s\in(0,1)}\frac{\max\{h_0'(s),h_1'(s)\}}{\min\{h_0'(s),h_1'(s)\}} \leq \max\left\{\frac{h_0'(\bar s)}{h_1'(0)}, \frac{h_1'(\bar s)}{h_0'(0)}, \frac{C}{c}\right\} = C(\rho_0,\rho_1),
\end{equation}
where in the second inequality we used Lemma~\ref{lem:nondegeneracy} as well as the monotonicity of $h_0'$ and $h_1'$ by Lemma~\ref{lemma_h}(a). 
%
%
Hence for any $t\in[0,1]$, 
\begin{equation}\label{eq:bound}
\int_{\mathbb{R}^n}|\nabla \cdot (V\rho_t)|\,dx\le \int_0^{R_t} |T_1(r)+T_2(r)| \omega_n r^{n-1}dr \leq C(\rho_0,\rho_1),
\end{equation}
where in the last inequality we used the uniform bound of $R_t$ in Lemma~\ref{lem_supp}, and the fact that 
$\int_0^{R_t} |\partial_r \rho_t(r)| dr \leq \|\rho_t\|_\infty \leq \max\{\|\rho_0\|_{\infty},\|\rho_1\|_{\infty}\}$. This finishes the proof of \eqref{eq_bd1}.

Next we aim to show
\begin{equation}\label{eq_bd2}
\partial_t \nabla \cdot(V\rho)\in L^\infty_t([0,1]; W^{-1,\infty}(\mathbb{R}^n)).
\end{equation}
 Taking a radial test function $\psi\in C^1(\R^n)$, we have
\[
\int_{\R^n}\psi \nabla \cdot(V(x,t)\rho_t)\,dx=-\int_{\R^n}\nabla \psi \cdot V(x,t)\rho_t\,dx=\int_0^\infty  \psi'(r)(h_1-h_0)(s_{r,t}) c_n r^n \,dr,
\]
where we use the first identity of \eqref{def_v} in the second equality.
Differentiating in $t$, we have
\begin{equation}\label{dt_psi}
\frac{d}{dt}\int_{\R^n}\psi \nabla \cdot(V(x,t)\rho_t)\,dx=\int_0^\infty  \psi'(r)(h_1-h_0)'(s_{r,t}) \partial_t  s_{r,t} \, c_n r^n \,dr.
\end{equation}
Note that $\partial_t  s_{r,t}$ can be explicitly computed as follows.  
Differentiating \eqref{def_srt} in $t$ (and note that its right hand side is independent of $t$) gives 
$$
0 = \partial_t (h'_t(s_{r,t}))= (h'_1-h'_0)(s_{r,t}) + h''_t(s_{r,t})\partial_t s_{r,t},
$$
thus
\begin{equation}\label{dt_srt}
\partial_t s_{r,t}=-\frac{(h_1'-h_0')(s_{r,t})}{h''_t(s_{r,t})}=n c_n^{1/n} \partial_r \rho_t(r)\frac{(h_1'-h_0')(s_{r,t})}{(h'_t(s_{r,t}))^{2+1/n}},
\end{equation}
where we use that $\frac{1}{h_t''(s_{r,t})} = - \frac{\partial_r \rho_t(r)}{nc_n^{1/n} h_t'(s_{r,t})^{2+1/n} }$ in the last inequality, which follows from  \eqref{hstemp} (note that even though the equation is stated for $\rho_0$, it indeed works for $\rho_t$ as well, which is known to be strictly decreasing in its support) and \eqref{def_v}.

Plugging \eqref{dt_srt} into \eqref{dt_psi}, the left hand side of \eqref{dt_psi} can be bounded as
\begin{equation}\label{eq_psi_temp}
\begin{split}
\left|\frac{d}{dt}\int_{\R^n}\psi \nabla \cdot(V(x,t)\rho_t)\,dx\right| &\leq \int_0^\infty  \psi'(r) |\partial_r \rho_t| \left(\frac{(h_1'-h_0')(s_{r,t})}{h_t'(s_{r,t})}\right)^2  h_t'(s_{r,t})^{-1/n} c_n^{1+\frac{1}{n}} r^n \, dr \\& \leq C(\rho_0,\rho_1) \, \|\psi\|_{C^1},
\end{split}
\end{equation}
where in the second inequality we use \eqref{bd_frac} to control the fraction, and also used $\int_0^\infty |\partial_r \rho_t| dr = \int_0^{R_t} |\partial_r \rho_t| dr \leq \max\{\|\rho_0\|_\infty,\|\rho_1\|_\infty\}$, as well as the fact that $h_t'(s_{r,t})\geq \min\{h_0'(0),h_1'(0)\}>0$. 
Since the right hand side of \eqref{eq_psi_temp} is independent of $t$, this concludes the proof of \eqref{eq_bd2}.

Finally, we put \eqref{eq_bd1} and \eqref{eq_bd2} together, and apply the Aubin-Lions type Lemma. The compactness of $L^1(\mathbb{R}^d)$ in $\mathcal{M}$ implies
$
\nabla\cdot(V\rho)\in C_t([0,1];\mathcal{M}),
$
for a similar proof see \cite[Lemma 8.1.2]{AGS}.
\end{proof}

Using the previous regularity lemma, for the interpolation curve between any two radial stationary solutions $\rho_0$ and $\rho_1$, we show its energy functional $\mathcal{E}[\rho_t]$ has a zero right derivative at $t=0$.

\begin{proposition}\label{prop:general}
Let $\rho_0$ and $\rho_1$ be two radially symmetric steady states in the sense of Definition~\ref{def:gendef}. We consider $\rho_t:[0,1]\to \mathcal{P}(\R^n)$ the interpolation given by \eqref{rho_t}. Then for $m\geq 2$, we have
    \begin{equation}\label{goal_e}
        \lim_{t\to 0^+} \frac{\mathcal{E}[\rho_t]-\mathcal{E}[\rho_0]}{t}=0.
    \end{equation}
\end{proposition}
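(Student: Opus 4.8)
The plan is to show that the right derivative of $t\mapsto\mathcal{E}[\rho_t]$ at $t=0$ exists and equals $0$, by computing it through the continuity equation $\partial_t\rho_t=-\nabla\cdot(V\rho_t)$ supplied by Lemma~\ref{lem:furtherregularity} and then invoking the Euler--Lagrange identity $\frac{m}{m-1}\rho_0^{m-1}+W*\rho_0=C$ on $\supp\rho_0$ (Lemma~\ref{lemma_stat}(c)). First I would collect the available regularity. Since $\rho_0,\rho_1$ are steady states and $m\geq2$, Lemma~\ref{lemma_stat}(b,d,e) and Lemma~\ref{prop:regularity} give (after centering both at the origin) that each $\rho_i$ is continuous, compactly supported in a ball $B_{R_i}$, smooth and strictly radially decreasing inside its support, with $\Delta\rho_i(0)<0$; in particular $\rho_0,\rho_1$ satisfy the hypotheses of Lemmas~\ref{lem:nondegeneracy} and \ref{lem:furtherregularity}, so the vector field $V$ of \eqref{def_v} obeys $\partial_t\rho_t=-\nabla\cdot(V\rho_t)\in C_t([0,1];\mathcal{M})$ with $\sup_{t}\|\nabla\cdot(V\rho_t)\|_{L^1}<\infty$ by \eqref{eq:bound}, while $\rho_t\in C_t([0,1];L^p)$ for all $p<\infty$ by Proposition~\ref{prop_lipschitz}. (If $\rho_0\equiv\rho_1$ then $\rho_t\equiv\rho_0$ and \eqref{goal_e} is trivial, so assume otherwise.) I write $\mathcal{E}=\mathcal{S}+\mathcal{I}$ and handle the two terms separately.

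For the entropy, the representation \eqref{Internal Energy} gives $\mathcal{S}[\rho_t]=\int_0^1\frac{m}{m-1}h_t(s)^{m-1}\,ds$ with $h_t=(1-t)h_0+th_1$. Since $m\geq2$ and $h_0,h_1$ are continuous on $(0,1)$ and bounded by $\max_i\|\rho_i\|_\infty$, the difference quotient $\frac{h_t(s)^{m-1}-h_0(s)^{m-1}}{t}$ is dominated by $C\,|h_1(s)-h_0(s)|\in L^1(0,1)$ and converges pointwise to $(m-1)h_0(s)^{m-2}(h_1(s)-h_0(s))$, so dominated convergence yields
\[
\lim_{t\to0^+}\frac{\mathcal{S}[\rho_t]-\mathcal{S}[\rho_0]}{t}=m\int_0^1 h_0(s)^{m-2}\big(h_1(s)-h_0(s)\big)\,ds=:L_S .
\]
For the interaction energy, expanding the quadratic form gives $\mathcal{I}[\rho_t]-\mathcal{I}[\rho_0]=\frac12\int_{\mathbb{R}^n}(\rho_t-\rho_0)\,W*(\rho_t+\rho_0)\,dx$. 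As $t\to0^+$ we have $W*(\rho_t+\rho_0)\to2\,W*\rho_0$ uniformly on any fixed ball, since $W\in L^q_{loc}$ for some $q>1$ by (W2) and $\rho_t\to\rho_0$ in $L^{q'}$ (Proposition~\ref{prop_lipschitz}); meanwhile $\frac{\rho_t-\rho_0}{t}=\frac1t\int_0^t\partial_\tau\rho_\tau\,d\tau$ converges to $\partial_0\rho_0=-\nabla\cdot(V_0\rho_0)$ weakly-$*$ as measures, with uniformly bounded total variation and support in $B_{\max\{R_0,R_1\}}$, because $\tau\mapsto\langle\partial_\tau\rho_\tau,\phi\rangle$ is continuous on $[0,1]$ for every test function $\phi$ (Lemma~\ref{lem:furtherregularity}). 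Pairing a weakly-$*$ convergent sequence of measures of uniformly bounded mass against a uniformly convergent sequence of continuous functions passes to the limit, so
\[
\lim_{t\to0^+}\frac{\mathcal{I}[\rho_t]-\mathcal{I}[\rho_0]}{t}=\int_{\mathbb{R}^n}(W*\rho_0)\,d\big({-}\nabla\cdot(V_0\rho_0)\big)=\int_{\mathbb{R}^n}\nabla(W*\rho_0)\cdot V_0\rho_0\,dx=:L_I ,
\]
where the last step is an integration by parts valid because $W*\rho_0$ is Lipschitz on a neighborhood of $\overline{B_{R_0}}$ (Lemma~\ref{lemma_stat}(a)) and $V_0\rho_0$ is a compactly supported $L^1$ vector field with $\nabla\cdot(V_0\rho_0)\in L^1$.

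It remains to show $L_S+L_I=0$. On $\supp\rho_0=B_{R_0}$ the Euler--Lagrange relation gives $W*\rho_0=C-\frac{m}{m-1}\rho_0^{m-1}$, hence $\nabla(W*\rho_0)=-m\,\rho_0^{m-2}\nabla\rho_0$ there, while $V_0\rho_0\equiv0$ outside $B_{R_0}$; therefore $L_I=-m\int_{B_{R_0}}\rho_0^{m-1}\,\nabla\rho_0\cdot V_0\,dx$. Writing this in the radial variable and substituting $r\leftrightarrow s_{r,0}$ as defined by \eqref{def_srt} (for which $\rho_0(r)=h_0(s_{r,0})$, $c_nr^n=h_0'(s_{r,0})^{-1}$, $\partial_r\rho_0=-nc_n^{1/n}h_0'(s_{r,0})^{2+1/n}/h_0''(s_{r,0})$ by \eqref{hstemp}, and $v(r,0)$ given by \eqref{def_v}), the factor $h_0''$ and all powers of $h_0'$ cancel, leaving $\int_{B_{R_0}}\rho_0^{m-1}\nabla\rho_0\cdot V_0\,dx=\int_0^1 h_0(s)^{m-2}(h_1(s)-h_0(s))\,ds=L_S/m$. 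Hence $L_I=-L_S$ and $\lim_{t\to0^+}\frac{\mathcal{E}[\rho_t]-\mathcal{E}[\rho_0]}{t}=L_S+L_I=0$, which is \eqref{goal_e}. (Equivalently, this identity says that $L_S$ and $L_I$ are, respectively, the pairings of the first variations $\tfrac{m}{m-1}\rho_0^{m-1}$ and $W*\rho_0$ with $\partial_0\rho_0$, so $L_S+L_I=\int\nabla\big(\tfrac{m}{m-1}\rho_0^{m-1}+W*\rho_0\big)\cdot V_0\rho_0\,dx$, which vanishes because the bracketed function is constant — hence has zero gradient — on $\supp\rho_0$, and $V_0\rho_0$ vanishes off $\supp\rho_0$.)

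The step I expect to be the main obstacle is the identification of $\lim_{t\to0^+}\frac{\mathcal{I}[\rho_t]-\mathcal{I}[\rho_0]}{t}$ with the measure pairing $\int(W*\rho_0)\,d(\partial_0\rho_0)$: this is where the fine regularity of the interpolation curve is essential, and it rests on Lemma~\ref{lem:furtherregularity} (built in turn on the non-degeneracy bound of Lemma~\ref{lem:nondegeneracy}) providing both the $C_t([0,1];\mathcal{M})$ continuity — so that the Cesàro average $\frac1t\int_0^t\partial_\tau\rho_\tau\,d\tau$ converges — and the uniform total-variation bound — so that this merely measure-valued limit can legitimately be paired against the non-smooth factor $W*(\rho_t+\rho_0)$. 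A secondary but routine point is the pair of integrations by parts, for which one needs that $W*\rho_0$ and $\rho_0^{m-1}$ are Lipschitz near $\overline{B_{R_0}}$ (the latter because $\rho_0^{m-1}=\tfrac{m-1}{m}(C-W*\rho_0)_+$ there) and that $V_0\rho_0$ is compactly supported with integrable divergence.
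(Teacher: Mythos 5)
Your proof is correct and follows essentially the same route as the paper's: the same decomposition $\mathcal{E}=\mathcal{S}+\mathcal{I}$, the same dominated-convergence argument for $\mathcal{S}$ via the representation in height coordinates, the same reliance on Lemma~\ref{lem:furtherregularity} ($\partial_t\rho_t\in C_t([0,1];\mathcal{M})$ with uniform $TV$ bound) and on the joint continuity \eqref{conti_w} of $W*\rho_t$, the same integration by parts plus the Euler--Lagrange identity, and the same change of variables $r\leftrightarrow s_{r,0}$ to identify $L_I=-L_S$. The only cosmetic difference is in how the interaction term is set up before passing to the limit: the paper writes $\frac{\mathcal{I}[\rho_t]-\mathcal{I}[\rho_0]}{t}=\frac{1}{t}\int_0^t\int\partial_\tau\rho_\tau\,(W*\rho_\tau)\,dx\,d\tau$ and invokes continuity of the inner integral in $\tau$, whereas you expand the quadratic form as $\frac{1}{2t}\int(\rho_t-\rho_0)\,W*(\rho_t+\rho_0)\,dx$ and then pair the Cesàro average $\frac{1}{t}\int_0^t\partial_\tau\rho_\tau\,d\tau$ (converging weakly-$*$) against the uniformly convergent factor. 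Both hinge on the identical regularity inputs and lead to the same limit; your version is slightly more self-contained in that it avoids differentiating $\mathcal{I}[\rho_\tau]$ under the integral, but this is a matter of presentation rather than substance.
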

\begin{proof}
We decompose the energy into the entropy and interaction part. For the entropy $\mathcal{S}$, we use \eqref{Internal Energy} with $\Phi(s) = \frac{1}{m-1}s^m$ to rewrite it as
$$
\mathcal{S}[\rho_t] = \frac{1}{m-1}\int_{\R^n} \rho_{t}^m(x)\,dx=\frac{m}{m-1}\int_0^1\big( (1-t)h_0(s)+t h_1(s)\big)^{m-1}\,ds \quad\text{ for all }t\in(0,1),
$$
thus the finite difference can be written as
\[
\frac{\mathcal{S}[\rho_t] - \mathcal{S}[\rho_0]}{t} = \frac{m}{m-1} \int_0^1 \frac{\big( (h_0(s)+t(h_1-h_0)(s)\big)^{m-1} - h_0(s)^{m-1}}{t} ds.
\]
Note that for all $s\in(0,1)$, the integrand converges to $(m-1) h_0^{m-2}(h_1-h_0)$ as $t\to 0^+$. In addition, since $m\geq 2$, due to the convexity of $h\mapsto h^{m-1}$, the absolute value of the integrand is bounded by $(m-1)\max\{\|h_0\|_\infty, \|h_1\|_\infty\}^{m-2}|h_1-h_0|$ for all $t\in(0,1)$, which is finite since $\|h_i\|_\infty = \|\rho_i\|_\infty$ for $i=0,1$. Thus Lebesgue's dominated convergence theorem gives
$$
\lim_{t\to 0+}
\frac{\mathcal{S}[\rho_t] - \mathcal{S}[\rho_0]}{t} =m\int_0^1 h_0(s)^{m-2} (h_1(s)-h_0(s))\,ds.
$$

Next we deal with the interaction energy $\mathcal{I}[\rho_t]=\frac{1}{2}\int_{\mathbb{R}^n} \rho_t (\rho_t * W)dx$, and aim to show that
\begin{equation}\label{goal_i}
\lim_{t\to 0+}
\frac{\mathcal{I}[\rho_t] - \mathcal{I}[\rho_0]}{t} =-m\int_0^1 h_0(s)^{m-2} (h_1(s)-h_0(s))\,ds.
\end{equation}
Once this is done, adding the two inequalities above directly yields \eqref{goal_e}, finishing the proof.

We use the representation of the interpolation curve by the continuity equation. Notice that for any $t\in(0,1)$, we have
\begin{equation}\label{eq_temp_i}
\frac{\mathcal{I}[\rho_t] - \mathcal{I}[\rho_0]}{t}=\frac{1}{t}\int_0^t \int_{\R^n}(\partial_t \rho_t) (W*\rho_t) \,dxdt = \frac{1}{t}\int_0^t \int_{\R^n}-\nabla\cdot(\rho_t V_t)  (W*\rho_t) \,dxdt.
\end{equation}
Let us point out that \begin{equation}\label{conti_w}
W*\rho_t\in C(\mathbb{R}^n\times[0,1]).
\end{equation}
To see this, recall that in \eqref{eq:Lpcontinuity} we showed that $\rho_t\in C_t(L^p_x)$ for any $p\in[1,\infty)$. Combining this with the property that $W\in L^q_{loc}$ for some $q>1$ (by (W2)), as well as the fact that $\{\rho_t\}_{t\in[0,1]}$ are uniformly compactly supported by Lemma~\ref{lem_supp}(a),  we have \eqref{conti_w}.

By Lemma~\ref{lem:furtherregularity}, we have $\partial_t\rho_t= - \nabla\cdot(\rho_t V_t)\in C_t([0,1];\mathcal{M})$, thus
$$
\int_{\R^n} -\nabla\cdot(\rho_t V_t)(W*\rho_t ) \,dx\;\;\mbox{is continuous in $t$ for $t\in[0,1]$.}
$$
Using the continuity property in $t$, we can send $t\to 0^+$ in \eqref{eq_temp_i} to obtain
\begin{equation}\label{eq_temp_i2}
\begin{split}
\lim_{t\to0^+}\frac{\mathcal{I}[\rho_t] - \mathcal{I}[\rho_0]}{t}&= \int_{\R^n}-\nabla\cdot(\rho_0 V_0) (W*\rho_0) \,dx\\
& =\int_{\supp\rho_0} \rho_0 V_0 \cdot \nabla (W*\rho_0) \,dx\\
& =\int_{\supp\rho_0} \rho_0 V_0 \cdot \left(-\frac{m}{m-1} \nabla \rho^{m-1}\right) \,dx\\
&=  -m\int_{\supp\rho_0} \rho_0^{m-1} \nabla \rho_0 \cdot V_0  \,dx,
\end{split}
\end{equation}
where we used Definition~\ref{def:gendef} in the second-to-last inequality.

Writing $\rho_0(x)=\rho_0(r)$, the above integral can be written in radial coordinates as
\begin{equation}\label{identity0}
m\int_{\supp\rho_0}\rho_0^{m-1}\nabla  \rho_0 \cdot V_0\,dx=m\int_0^{R_0} \rho_0(r)^{m-1} \rho_0'(r)v(r,0)  \omega_n r^{n-1}\,dr,
\end{equation}
where $v(r,0)$ is given by \eqref{def_v}.
Next, we want to take the monotone change of variables 
$
    c_n r^{n}=\frac{1}{h_0'(s)}
$
to express the right hand side of \eqref{identity0} as an integral of $s$. With this change of variables, we have the identities
$$
\rho_0(r)=h_0(s),\qquad v(r,0)=\big(c_n h_0'(s)\big)^{-1/n}\frac{h_0(s)-h_1(s)}{nh_0(s)}
$$
and
$$
\omega_n r^{n-1}dr=-\frac{h_0''(s)}{h_0'(s)^2}ds=\frac{n c_n^{1/n}  h_0'(s)^{1/n}}{\rho_0'((c_n h_0'(s))^{-1/n})}ds=\frac{n c_n^{1/n}  h_0'(s)^{1/n}}{\rho_0'(r)}ds,
$$
where we used the expression for $h''(s)$ in \eqref{h''}. Plugging the above into \eqref{identity0}, and note that we have $s\to 1$ as $r\to0$, and $s\to0$ as $r\to R_0$. This gives
$$
m\int_0^{R_0} \rho_0(r)^{m-1}\rho_0'(r)v(r,0) \omega_n r^{n-1}\,dr=m\int_0^1 h_0(s)^{m-2}(h_1(s)-h_0(s))\,ds.
$$
Combining this with \eqref{eq_temp_i2} and \eqref{identity0} gives \eqref{goal_i}, finishing the proof.
\end{proof}

\subsection{Convexity of the interaction energy along the curve}\label{sec_convexity}
In this section, we aim to prove the following proposition:
\begin{proposition}\label{prop_convex_nd} 
Let $\rho_0, \rho_1\in \mathcal{P}(\R^n) \cap C(\mathbb{R}^n)$ be two radially decreasing probability densities on $\mathbb{R}^n$ that are not identical.  Consider the interpolation curve $\{\rho_t\}_{t\in[0,1]}$ as given in \eqref{h_t} and \eqref{rho_t}. Then the function $t\mapsto \mathcal{I}[\rho_t]$ is strictly convex for $t\in (0,1)$. 
\end{proposition}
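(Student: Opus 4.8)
\emph{Proof sketch (plan).} The plan is to carry out the argument of Proposition~\ref{prop_convex_1d} in arbitrary dimension $n$. Since $\mathcal{I}$ is linear in $W$ and $W'>0$, and \eqref{eq:decompW} writes $W(r)=\int_0^\infty W'(a)W_a(r)\,da+w_0$ with $w_0=\lim_{r\to0^+}W(r)$ — the case $w_0=-\infty$ being handled by the truncations $W_\varepsilon:=\max\{W,-\varepsilon^{-1}\}$ and a passage to the limit $\varepsilon\to0^+$, exactly as in the proof of Proposition~\ref{prop_convex_1d} — it suffices to prove that $t\mapsto\mathcal{I}[\rho_t]$ is convex when $W=W_a$ for each fixed $a>0$, and strictly convex for some $a>0$. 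Fix such an $a$ and write $r_i(t):=(c_n h_t'(s_i))^{-1/n}$ for the radius of the $s_i$-layer of $\rho_t$, so that by \eqref{h_t} the quantity $r_i(t)^{-n}=c_n h_t'(s_i)$ is \emph{affine} in $t$. Inserting \eqref{h_rho} into $\mathcal{I}[\rho_t]=\tfrac12\int_{\R^n}\rho_t\,(W_a*\rho_t)\,dx$ and using $h_t'(s_1)h_t'(s_2)=|B_{r_1(t)}|^{-1}|B_{r_2(t)}|^{-1}$ gives
\[
\mathcal{I}[\rho_t]=\tfrac12\int_0^1\!\!\int_0^1 I(t;s_1,s_2)\,ds_1\,ds_2,\qquad
I(t;s_1,s_2):=\frac{\bigl|\{(x,y)\in B_{r_1(t)}\times B_{r_2(t)}:\ |x-y|\ge a\}\bigr|}{|B_{r_1(t)}|\,|B_{r_2(t)}|}\in[0,1].
\]
As in dimension one, for each $t$ the set of $(s_1,s_2)$ for which the boundary of $B_{r_1(t)}\times B_{r_2(t)}\subset\R^{2n}$ meets the diagonal stripe $\{(x,y):|x-y|<a\}$ is $(s_1,s_2)$-null (because $s\mapsto h_t'(s)$ is strictly increasing, by Lemma~\ref{lemma_h}(a) and the continuity of $\rho_0,\rho_1$), so it is enough to show that $t\mapsto I(t;s_1,s_2)$ is convex on $[0,1]$ for a.e.\ $(s_1,s_2)\in(0,1)^2$, and strictly convex on a positive-measure set of pairs for all small $a>0$.

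\emph{The case analysis.} Fix $(s_1,s_2)$, put $f(t):=h_t'(s_1)$, $g(t):=h_t'(s_2)$ (affine in $t$ and positive), and observe that $I$ depends on $t$ only through $(f,g)$, say $I=\iota(f,g)$. Since $f''=g''=0$,
\[
\frac{\partial^2}{\partial t^2}I(t;s_1,s_2)=\iota_{ff}\,(f')^2+2\iota_{fg}\,f'g'+\iota_{gg}\,(g')^2,
\]
a quadratic form in $(f',g')=(h_1'(s_1)-h_0'(s_1),\,h_1'(s_2)-h_0'(s_2))$. Thus it suffices to prove that on each cell of the partition of the pairs $(s_1,s_2)$ according to the mutual position of the balls $B_{r_1(t)},B_{r_2(t)}$ and the stripe — generalising the nesting/disjoint/overlap Cases~0--3 of Proposition~\ref{prop_convex_1d} — the Hessian of $\iota$ in $(f,g)$ is positive semi-definite, i.e.\ $\iota_{ff}\ge0$, $\iota_{gg}\ge0$, and $\iota_{fg}^{2}\le\iota_{ff}\iota_{gg}$. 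Together with the fact that $t\mapsto I$ is $C^1$ (both numerator and denominator of $I$ are $C^1$ in $t$), this yields convexity of $t\mapsto I$, since a one-variable $C^1$ function that is $C^2$ with non-negative second derivative on each cell of a finite subdivision is convex. On the ``non-overlapping'' cells $\iota$ is constant or affine in $(f,g)$ (as in Cases~1--2), so the Hessian vanishes; the genuine computation is the analogue of Case~3, where the two balls overlap.

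\emph{The overlap cell, strict convexity, and the main obstacle.} For the overlap cell the plan is to slice the $2n$-dimensional volume into spheres: with $\mu^\ast(\tau_1,\tau_2):=\tfrac{\tau_1^2+\tau_2^2-a^2}{2\tau_1\tau_2}$, rotational invariance gives
\[
\bigl|\{(x,y)\in B_{r_1}\times B_{r_2}:|x-y|\ge a\}\bigr|
=\omega_n\omega_{n-1}\int_0^{r_1}\!\!\int_0^{r_2}\Bigl(\int_{-1}^{\min\{1,\,\mu^\ast(\tau_1,\tau_2)\}}(1-\mu^2)^{\frac{n-3}{2}}\,d\mu\Bigr)\tau_1^{n-1}\tau_2^{n-1}\,d\tau_1\,d\tau_2
\]
(the inner integral read as $0$ when $\mu^\ast<-1$; for $n=1$ this is the rectangle-and-stripe formula); one then differentiates $\iota$ twice in $(f,g)$ and verifies $\iota_{fg}^{2}\le\iota_{ff}\iota_{gg}$, the higher-dimensional counterpart of the computation in Case~3 of Proposition~\ref{prop_convex_1d}, where the discriminant is negative because it factors into four affine expressions exactly one of which is negative. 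Strict convexity then follows exactly as in the last paragraph of the proof of Proposition~\ref{prop_convex_1d}: since $\rho_0\not\equiv\rho_1$, there is $s_0\in(0,1)$ and a neighbourhood on which $h_0'<h_1'$, so for all small $a>0$ the pairs $(s_1,s_2)$ near $(s_0,s_0)$ lie in the overlap cell with $f'g'>0$, whence $\partial_t^2 I>0$ there and $t\mapsto\mathcal{I}[\rho_t]$ is strictly convex. I expect the main obstacle to be precisely this overlap-cell computation: for $n\ge2$ the volumes entering $\iota$ are incomplete beta functions rather than polynomials, so both organising the case analysis (the relative position of two $n$-balls with respect to the diagonal stripe in $\R^{2n}$ is far less transparent than the rectangle picture in $\R^2$) and checking the discriminant inequality $\iota_{fg}^{2}\le\iota_{ff}\iota_{gg}$ become genuinely computational; a cleaner route, if it can be made to work, would be to prove the stronger statement that $\iota$ is \emph{jointly} convex as a function of $(r_1^{-n},r_2^{-n})$ on the overlap region, which would remove the need to track the particular affine path $t\mapsto(f(t),g(t))$.
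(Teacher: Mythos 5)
Your reductions are correct and match the paper's strategy exactly: reduce to $W = W_a$ via \eqref{eq:decompW}, write $\mathcal{I}[\rho_t]$ as a double $(s_1,s_2)$-integral of $I(t;s_1,s_2)$ with $h_t'(s_i)$ affine in $t$, discard the null boundary set, verify the Hessian-of-$\iota$ inequality $\iota_{fg}^2\le\iota_{ff}\iota_{gg}$ cell by cell, and upgrade to strict convexity using $h_0'\neq h_1'$ on a set of positive measure. Your alternative framing — joint convexity of $\iota$ in $(r_1^{-n},r_2^{-n})$ on the overlap region — is in fact the content of what the paper proves, not a different route: positive semi-definiteness of the $(f,g)$-Hessian of a $C^2$ function is exactly joint convexity.

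The gap is the one you yourself flag: you never carry out the overlap-cell computation, and it is the entire substance of the multidimensional proposition. The paper's resolution (Proposition~\ref{prop_convex_nd} together with Lemma~\ref{lemI_1}) is far from a mechanical differentiation of the incomplete-beta representation you write down. It first rewrites $I(R,r)$ by integrating over $B(0,1)^c$ rather than against $W_a$ directly (i.e.\ the identity $I=1-R^{-n}+c_n^{-2}R^{-n}r^{-n}\int_R^{r+1}A(r,1;s)\,\omega_n s^{n-1}ds$, obtained by the symmetry $\int (f*g)h=\int(g*h)f$), then computes $A_r,A_s$ via a slicing of the lens-shaped intersection in the $x_1$ coordinate, and finally establishes the closed forms
\[
U=(R^{n+1}r^{n+1}I_R)_R=\tilde c_n\,R r\,S(R,r)^{\frac{n-1}{2}},\qquad
V=R^{n+1}r^{n+1}I_{Rr}=-\tfrac{\tilde c_n}{2}(R^2+r^2-1)\,S(R,r)^{\frac{n-1}{2}},
\]
with $S(R,r)$ the Heron discriminant, so that $UW-V^2=\tfrac{\tilde c_n^2}{4}S(R,r)^n>0$ drops out by the identity $(R^2+r^2-1)^2-4R^2r^2=-S(R,r)$. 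The $V$-computation in particular requires a nontrivial integration by parts (the term $\Gamma$ in the paper) to make the incomplete-beta pieces cancel. None of this is visible from, or implied by, the slicing formula you give; without it the proof does not close. Your plan is a correct outline of the paper's argument, but the proposition is not proved until the discriminant inequality is actually verified.
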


\begin{proof}

Most of this proof is devoted to the convexity of $t\mapsto \mathcal{I}[\rho_t]$, except that at the very end we will improve the convexity into strict convexity. By an identical argument as in the first paragraph of the proof of Proposition~\ref{prop_convex_1d}, it suffices to obtain the convexity of $t\mapsto \mathcal{I}[\rho_t]$ for each interaction potential of the form $W=W_a$ for all $a>0$, with $W_a(|x|)$ given by the step function \eqref{def_wa}.

For the potential $W=W_a$, using \eqref{h_rho}, we rewrite the interaction energy in $\mathbb{R}^n$ as
\begin{equation*}
\begin{split}
\mathcal{I}[\rho_t] &=\frac{1}{2} \int_0^1 \int_0^1 h_t'(s_1) h_t'(s_2) \left|\Big\{(x,y): |x|\leq  (c_n h_t'(s_1))^{-1/n}, |y|\leq (c_n h_t'(s_2))^{-1/n}, |x-y|>a\Big\}\right| ds_1 ds_2.
\end{split}
\end{equation*}
Denote the integrand by $I(t; s_1, s_2)$. In order to show that $\mathcal{I}[\rho_t]$ is convex in $t$, it suffices to show that $I(t; s_1, s_2)$ is a convex function of $t$ for a.e. $s_1, s_2 \in (0,1)$. Let us rewrite $I$ as
\begin{equation*}
\begin{split}
I(t; s_1, s_2) &= h_t'(s_1) h_t'(s_2) \left|\Big\{(x,y): \left|\frac{x}{a}\right|\leq  (c_n a^{n} h_t'(s_1))^{-1/n}, \left|\frac{y}{a}\right|\leq (c_n a^{n}  h_t'(s_2))^{-1/n}, \left|\frac{x}{a}-\frac{y}{a}\right|>1\Big\}\right|\\[0.1cm]
&= a^{2n} h_t'(s_1) h_t'(s_2) \left|\Big\{(x,y): |x|\leq  (c_n a^{n} h_t'(s_1))^{-1/n}, |y| \leq (c_n a^{n}  h_t'(s_2))^{-1/n}, |x-y|>1\Big\}\right|.
 \end{split}
\end{equation*}
For any fixed $s_1, s_2\in(0,1)$, let us introduce 
\[
R(t) := (c_n a^n h_t'(s_1))^{-1/n}~\text{ and }~r(t) :=(c_n a^n h_t'(s_2))^{-1/n} \quad\text{ for }t\in[0,1],
\] so that we can rewrite $I$ in terms of $R(t)$ and $r(t)$:
\begin{equation}\label{temp_i}
I(t; s_1, s_2) = c_n^{-2} R(t)^{-n} r(t)^{-n} \left|\Big\{(x,y): |x|\leq R(t), |y| \leq r(t), |x-y|>1\Big\}\right| =: I(R(t),r(t)).
\end{equation}
 From now on, by a slight abuse of notation, we will denote the function as $I(R(t), r(t))$. Recall that $R(t)^{-n}$ and $r(t)^{-n}$ are both affine functions of $t$, since $h_t'(s_1)$ and $h_t'(s_2)$ are both affine in $t$.  
 
 For almost every $(s_1, s_2)\in(0,1)^2$, we are in one of the following cases. (By an identical argument as in Proposition~\ref{prop_convex_1d}, we can show that $|R(t)-r(t)|=1$ or $R(t)+r(t)=1$ only happen for a zero measure set of $(s_1,s_2)\in(0,1)^2$.)
 
Case 1. $R(t)+r(t)<1$. In this case we have $I(R(t), r(t))=0$, and it remains zero in a small interval containing $t$, thus $\frac{d^2}{dt^2} I(R(t), r(t)) = 0$. 

Case 2. $|R(t)-r(t)| > 1$. Without loss of generality, assume that $R(t) - r(t)>1$. Then we have
\begin{equation*}
\begin{split}
I(t; s_1, s_2) &= 1 - c_n^{-2} R(t)^{-n} r(t)^{-n} \left|\Big\{(x,y): |x|\leq R(t), |y| \leq r(t), |x-y|<1\Big\}\right|\\
&= 1 - c_n^{-2} R(t)^{-n} r(t)^{-n} \int_{B(0,r(t))}\int_{B(y,1)} dx dy \text{\quad(since $B(y,1) \subset B(0,R)$ for $y\in B(0,r)$)}\\
&= 1 -  R(t)^{-n}.
\end{split}
\end{equation*}
And since $R(t)^{-n}$ is an affine function of $t$, this leads to $\frac{d^2}{dt^2} I(R(t), r(t)) = 0$ in some interval containing $t$.

Case 3. $|R(t)-r(t)| < 1$ and $R(t)+r(t)>1$. The analysis in this case will be much more involved compared to the 1D proof, since we no longer have an explicit formula of $I$ as a function of $t$. 

Denote $\alpha:=\frac{d}{dt}(R^{-n})$ and $\beta:=\frac{d}{dt}(r^{-n})$. Note that $\alpha, \beta$ are constants, which could be positive or negative. We will express $\frac{d^2}{dt^2}I$ as a quadratic function of $\alpha$ and $\beta$ (where the coefficients depends on $R, r$ and $I_{RR}, I_{Rr}, I_{rr}$), and investigate the coefficients of this quadratic function. 

Let us start with the first derivative $\frac{d}{dt} I(R(t), r(t))$:
  \begin{equation}\label{I_first_derivative_t}
    \frac{d}{dt}I(R(t),r(t))=I_R(R(t),r(t)) \, R'(t)+I_r(R(t), r(t))\, r'(t).
  \end{equation}
  By definition of $\alpha$ and $\beta$, we have
  \[
    \alpha=-nR^{-n-1}R'(t), \quad \beta=-nr^{-n-1}r'(t),
  \]
  thus 
  \[
    R'(t)=-\frac{\alpha}{n}R(t)^{n+1}, \quad r'(t)=-\frac{\beta}{n}r(t)^{n+1}.
  \]
  Plugging these into \eqref{I_first_derivative_t} gives the following, where we compress the dependence on the variables for notational simplicity:
  \[
  \frac{d}{dt}I =-\frac{1}{n}(\alpha R^{n+1}I_R+\beta r^{n+1}I_r).
  \]
 Taking another derivative in $t$ on both sides gives 
  \begin{equation}\label{eq_2nd_der}
    \begin{split}
     \frac{d^2}{dt^2}I & =-\frac{1}{n}\Big(\alpha(R^{n+1}I_R)_R R'+\alpha(R^{n+1}I_R)_rr'+\beta(r^{n+1}I_r)_RR'+\beta(r^{n+1}I_r)_rr'\Big)\\
                               & =\frac{1}{n^2}\Big(\alpha^2 \underbrace{R^{n+1}(R^{n+1}I_R)_R}_{=: u} +2\alpha\beta \underbrace{R^{n+1}r^{n+1}I_{Rr}}_{:= v} +\beta^2 \underbrace{r^{n+1}(r^{n+1}I_r)_r}_{=: w}\Big).
     \end{split}
  \end{equation}
  With $u, v, w$ defined as above (all are functions of $R,r$), $\frac{d^2}{dt^2}I$ can be written as a quadratic function of $\alpha, \beta$ as 
  \[n^2 \frac{d^2}{dt^2}I(R(t),r(t)) =  u \alpha^2 + 2v \alpha\beta + w \beta^2.\]
   In order to show that it is nonnegative for all $\alpha, \beta\in\mathbb{R}$, it suffices to show that $u, w> 0$ and $uw - vˆ2 > 0$. By Lemma~\ref{lemI_1}, which we will prove right after this proof, we have the explicit expressions of $u,v,w$:
   \[
   \begin{split}
   u &= R^{n+1}(R^{n+1}I_R)_R = \frac{R^{n+1}}{r^{n+1}} (R^{n+1} r^{n+1}I_R)_R = \frac{R^{n+1}}{r^{n+1}} \tilde c_n Rr \,S(R,r)^{\frac{n-1}{2}}> 0,\\[0.1cm]
   v &= R^{n+1}r^{n+1}I_{Rr} = -\frac{\tilde c_n}{2} (R^2 + r^2 - 1) S(R,r)^{\frac{n-1}{2}},\\[0.1cm]
   w &= r^{n+1}(r^{n+1}I_r)_r =\frac{r^{n+1}}{R^{n+1}}  (R^{n+1} r^{n+1}I_r)_r =\frac{r^{n+1}}{R^{n+1}} \tilde c_n Rr \,S(R,r)^{\frac{n-1}{2}}> 0,
   \end{split}
   \]
   where $\tilde c_n$ and $S(R,r)$ are defined in Lemma~\ref{lemI_1}.
  A direct computation gives 
  \[
  uw - vˆ2 = \tilde c_n^2 S(R,r)^{n-1} \left( R^2 r^2 -\frac{1}{4}(R^2 + r^2 - 1)^2  \right) = \frac{\tilde c_n^2}{4} S(R,r)^n>0, 
  \]
  where we used the fact that $(R^2 + r^2 - 1)^2 - 4R^2 r^2 = -S(R,r)$ in the last identity. Since $u,w>0$ and $uw - v^2 > 0$, we then have $\frac{d^2}{dt^2} I\geq 0$ for all $\alpha, \beta \in \mathbb{R}$, finishing the convexity proof.

  Finally, it remains to upgrade the convexity into strict convexity when $\rho_0$ and $\rho_1$ are not identical. This can be done in the same way as the end of the proof of Proposition~\ref{prop_convex_1d}:  If $\rho_0$ and $\rho_1$ are not identical, without loss of generality we can assume $h_0'(s) < h_1'(s)$ in some small open interval containing $s_0\in(0,1)$. Then for all $a>0$ that is sufficiently small and $s_1, s_2$ sufficiently close to $s_0$, we have that $R(t) := (c_n a^n h_t'(s_1))^{-1/n}$ and $r(t) :=(c_n a^n h_t'(s_2))^{-1/n}$ belong to Case 3. In addition, for these $s_1, s_2$ we have $\partial_t h_t'(s) = h_1'(s)-h_0'(s) > 0$ for $t\in (0,1)$, implying that $R'(t)<0$ and $r'(t)<0$, thus $\alpha,\beta$ in \eqref{eq_2nd_der} are both strictly positive. Combining this with the fact that $uw - v^2 > 0$, we have that $\frac{\partial^2}{\partial t^2} I(t; s_1, s_2)>0$ for a positive measure of $(s_1, s_2)$ for all sufficiently small $a>0$, implying the strict convexity of $t\mapsto \mathcal{I}[\rho_t]$.
\end{proof}

Finally, it remains to prove Lemma~\ref{lemI_1}. Recall that in the proof above, for any $R,r>0$ satisfying $|R-r|< 1< R+r$, the function $I(R,r)$ is given by \eqref{temp_i}, namely
\begin{equation}\label{def_I_int1}
\begin{split}
I(R,r) &= c_n^{-2} R^{-n} r^{-n} \left|\Big\{(x,y): |x|\leq R, |y| \leq r, |x-y|>1\Big\}\right| \\
&= c_n^{-2} R^{-n} r^{-n} \int_{B(0,1)^c} \left(1_{B(0,R)} * 1_{B(0,r)}\right)(x) \,dx.
\end{split} 
\end{equation}
Next we will obtain the following explicit expressions on the second derivatives of $I$.
\begin{lemma}\label{lemI_1}
   Let $r,R>0$, and assume they satisfy $|R-r|< 1< R+r$.  For the function $I(R,r)$ given by \eqref{def_I_int1}, let 
\[   \begin{split}
   &U:=(R^{n+1}r^{n+1}I_{R})_R,\\[0.1cm]
   &V:=R^{n+1}r^{n+1}I_{Rr},\\[0.1cm]
   &W:=(R^{n+1}r^{n+1}I_{r})_r.
   \end{split}
\]Then we have the identities
   \begin{align}
     & U=W=\tilde c_n Rr \,S(R,r)^{\frac{n-1}{2}}> 0,\label{eq:uw}\\
     & V= -\frac{\tilde c_n}{2} (R^2 + r^2 - 1) S(R,r)^{\frac{n-1}{2}}, \label{eq:v}
      \end{align}
  
   where $\tilde c_n := \frac{c_{n-1}\omega_n}{2^{n-1}c_n^2} = \frac{c_{n-1}}{2^{n-1}n c_n}$, and $S(R,r)$ is given by
 \begin{equation}\label{def_S}
S(R,r):=(R+r+1)(-R+r+1)(R-r+1)(R+r-1).
\end{equation}
\end{lemma}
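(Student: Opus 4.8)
The plan is to reduce all three second‑derivative identities to a single one‑dimensional quantity: the volume of the overlap of two balls. For $\delta,\rho>0$ write $v(\delta;\rho):=|B(0,\rho)\cap B(\delta e_1,1)|$, which depends only on the center separation $\delta$ and the radii. The heart of the argument is the clean formula
\[
\partial_\delta v(\delta;\rho)=-\frac{c_{n-1}}{2^{n-1}}\,\delta^{1-n}\,\bigl(S(\delta,\rho)\bigr)_+^{\frac{n-1}{2}},
\]
where $(\cdot)_+$ denotes the positive part (the right side is $0$ exactly when the two balls are disjoint or nested, i.e. when $S(\delta,\rho)\le 0$, in which case $v$ is locally constant in $\delta$). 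To prove this I would write $v(\delta;\rho)=\int 1_{B(0,\rho)}(y)\,1_{B(0,1)}(y-\delta e_1)\,dy$, move $\partial_\delta$ onto the indicator of the unit ball, and recognize the result as the surface integral $\int_{\{z\in S^{n-1}:\,|z+\delta e_1|<\rho\}}z_1\,d\sigma(z)$ over the spherical cap $\{z_1<c\}$ with $c=\frac{\rho^2-\delta^2-1}{2\delta}$. Applying the divergence theorem with the constant field $e_1$ to the solid cut off by that cap gives $\partial_\delta v=-c_{n-1}(1-c^2)^{(n-1)/2}$, and the algebraic identity $1-c^2=S(\delta,\rho)/(4\delta^2)$ (which is just Heron's formula: $\sqrt{S}/(2\delta)$ is the radius of the $(n-2)$-sphere where $\partial B(0,\rho)$ and $\partial B(\delta e_1,1)$ meet) completes this step. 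Throughout we work in the regime $|R-r|<1<R+r$, which is precisely the strict triangle inequality for sides $R,r,1$, and which guarantees $S(R,r)>0$ and $R,r\in(|R-1|,R+1)$.

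Next I would obtain $U$ and $W$. Since $I=1-c_n^{-2}R^{-n}r^{-n}K$ with $K(R,r):=|\{|x|\le R,|y|\le r,|x-y|\le 1\}|=\omega_n\int_0^R v(\delta;r)\,\delta^{n-1}\,d\delta$, a direct differentiation gives $R^{n+1}r^{n+1}I_R=c_n^{-2}r\,(nK-RK_R)$; integrating by parts, and using $K_R=\omega_n v(R;r)R^{n-1}$, collapses $nK-RK_R$ to $-\omega_n\int_0^R\delta^n\partial_\delta v(\delta;r)\,d\delta$. Substituting the formula for $\partial_\delta v$ and simplifying yields
\[
R^{n+1}r^{n+1}I_R=\tilde c_n\,r\int_0^R\delta\,\bigl(S(\delta,r)\bigr)_+^{\frac{n-1}{2}}\,d\delta ,
\]
with $\tilde c_n=\frac{c_{n-1}\omega_n}{2^{n-1}c_n^{2}}$, exactly as in the statement. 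Differentiating in $R$ (and dropping the positive part since $S(R,r)>0$) gives $U=(R^{n+1}r^{n+1}I_R)_R=\tilde c_n\,Rr\,S(R,r)^{\frac{n-1}{2}}>0$. The equality $W=U$ then follows from the symmetry $I(R,r)=I(r,R)$ (just swap $x\leftrightarrow y$ in the definition of $K$), which yields the analogous expression $R^{n+1}r^{n+1}I_r=\tilde c_n\,R\int_0^r\delta\,(S(\delta,R))_+^{\frac{n-1}{2}}\,d\delta$, hence $W=\tilde c_n\,Rr\,S(r,R)^{\frac{n-1}{2}}=U$ because $S$ is symmetric.

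For $V$ I would use $V=R^{n+1}r^{n+1}I_{Rr}=\partial_R\Psi-\frac{n+1}{R}\Psi$ with $\Psi:=R^{n+1}r^{n+1}I_r=\tilde c_n R\int_0^r\delta(S(\delta,R))_+^{\frac{n-1}{2}}\,d\delta$. A routine differentiation under the integral sign, now involving $\partial_R S(\delta,R)=4R(\delta^2-R^2+1)$, produces
\[
V=\tilde c_n\Bigl[-n\int_0^r\delta(S)_+^{\frac{n-1}{2}}d\delta+2(n-1)R^2\int_0^r\delta(S)_+^{\frac{n-3}{2}}(\delta^2-R^2+1)\,d\delta\Bigr],\qquad S=S(\delta,R).
\]
To identify this with $-\tfrac{\tilde c_n}{2}(R^2+r^2-1)S(R,r)^{\frac{n-1}{2}}$ I would check that both sides vanish at $r=|R-1|$ (the left endpoint of the $r$-support of the integrands, where also $S(R,r)=0$) and that their $r$-derivatives agree; the latter, after cancelling the common factor $r\,S(r,R)^{(n-3)/2}$, reduces to the polynomial identity
\[
S(r,R)-2R^2(r^2-R^2+1)=(R^2+r^2-1)(R^2-r^2+1),
\]
which is verified by expanding both sides to $R^4-r^4+2r^2-1$. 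The main obstacle is exactly this computation for $V$: unlike $U$, there is no elementary closed form for $I_r$, so one must differentiate the integral representation and then recognize the resulting linear combination of integrals as a perfect $r$-derivative; the bookkeeping of the positive parts $(S)_+$ near the endpoints of the support, the justification of differentiation under the integral when $n=2$ (where $S^{-1/2}$ is only integrably singular), and the separate trivial case $n=1$ (where $(S)_+^{(n-1)/2}$ degenerates to $1_{\{S>0\}}$, handled by an explicit antiderivative) all require a little care. Once $U,V,W$ are in hand, the properties used downstream—$U,W>0$ and $UW-V^2=\tfrac{\tilde c_n^2}{4}S(R,r)^n>0$—follow at once from the identity $4R^2r^2-(R^2+r^2-1)^2=S(R,r)$.
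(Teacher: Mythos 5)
Your proposal is correct, and it is a genuinely different (and in places slicker) route than the paper's. Both proofs reduce to the overlap volume of two balls, and your key formula
\[
\partial_\delta v(\delta;\rho)=-\frac{c_{n-1}}{2^{n-1}}\,\delta^{1-n}\,\bigl(S(\delta,\rho)\bigr)_+^{\frac{n-1}{2}}
\]
is exactly the paper's identity $A_s(r,1;s)=-c_{n-1}l(r;s)^{n-1}$ rewritten via $l=\tfrac{1}{2s}S(s,r)^{1/2}$; you derive it by the divergence theorem, the paper by slicing in $x_1$. Where the approaches genuinely diverge is afterward. The paper first reorganizes $I$ into an integral of $A(r,1;s)$ over $s\in[R,r+1]$, and then must compute \emph{two} partial derivatives of the lens volume, $A_s$ \emph{and} the more complicated $A_r$; the latter appears in the $I_{Rr}$ expression, and closing the computation of $V$ requires a further integration by parts to collapse the auxiliary integral $\Gamma$. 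You avoid $A_r$ entirely: starting from the straightforward representation $K=\omega_n\int_0^R v(\delta;r)\delta^{n-1}d\delta$, one integration by parts in $\delta$ gives the clean closed form $R^{n+1}r^{n+1}I_R=\tilde c_n r\int_0^R\delta\,(S(\delta,r))_+^{(n-1)/2}d\delta$, from which $U$ is a one-line $R$-derivative, $W=U$ is immediate from $I(R,r)=I(r,R)$ and the symmetry of $S$, and $V$ is obtained by differentiating the analogous formula for $\Psi=R^{n+1}r^{n+1}I_r$ in $R$ and then matching the result against the claimed closed form by the ``endpoint vanishes, $r$-derivatives agree'' argument (your polynomial identity $S-2R^2(r^2-R^2+1)=(R^2+r^2-1)(R^2-r^2+1)$ is correct; both sides are $R^4-r^4+2r^2-1$). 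This buys you a shorter $U$/$W$ step and avoids the paper's $A_r$ computation, at the cost of a slightly more delicate differentiation-under-the-integral justification near the support endpoint $\delta=|R-1|$ (for $n=2$ the factor $S_+^{-1/2}$ is integrably singular, as you flag), and a separate remark for $n=1$. Both approaches end at the same algebra $(R^2+r^2-1)^2-4R^2r^2=-S(R,r)$ to extract $UW-V^2>0$.
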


\noindent\textbf{Remark.} Note that $S(R,r)>0$ for $|R-r|< 1< R+r$, and it has a geometric meaning: by Heron's formula, the area of the triangle with side lengths $R, r, 1$ is  $\frac{1}{4}S(R,r)^{1/2}$.

\begin{proof}
\noindent\textbf{Step 1. Equivalent expressions of $I(R,r)$.} Before we take derivatives on $I$, let us begin by rewriting $I(R,r)$ into some equivalent expressions. Since the integrand in \eqref{def_I_int1} is radially symmetric, one can write the integral in radial variable $s$ as
\[
I(R,r) =c_n^{-2} R^{-n} r^{-n}   \int_1^{r+R} A(R,r; s) \omega_n s^{n-1} ds,
\]
where we denote by $A(R,r;s)$  the volume of the intersection of two balls (in $\mathbb{R}^n$) of radii $R$ and $r$, with their centers separated by distance $s$. (The integral has upper limit $r+R$ since $A(R,r; s)\equiv 0$ when $s\geq R+r$).

Although this is the most straightforward way to express $I$,  its second derivative would involve second derivatives of $A$, whose analytical expression is difficult to obtain.  To circumvent this difficulty, we express $I(R,r)$ in another way as follows. By introducing  $f:=1_{B(0,R)}$, $g:=1_{B(0,r)}$ and $h:=1_{B(0,1)}$, note that \eqref{def_I_int1} can be rewritten as
\begin{equation*}
\begin{split}
I(R,r) &=  c_n^{-2} R^{-n} r^{-n} \int_{\mathbb{R}^n} (f*g)(1-h)dx\\
& =c_n^{-2} R^{-n} r^{-n} \left( \|f*g\|_1 - \int_{\mathbb{R}^n} (g*h) f dx\right)\quad \text{(since $\int_{\mathbb{R}^n} (f*g)h dx= \int_{\mathbb{R}^n} (g*h)f dx$)}\\
&= c_n^{-2} R^{-n} r^{-n} \left( \|f\|_1 \|g\|_1  - \|g\|_1 \|h\|_1 + \int_{\mathbb{R}^n} (g*h) (1-f) dx\right)\\
&=  1 - R^{-n} + c_n^{-2} R^{-n} r^{-n} \int_{B(0,R)^c} 1_{B(0,r)} * 1_{B(0,1)} dx,
\end{split}
\end{equation*}
and writing the last integral in radial variable gives
\begin{equation} \label{I_def_R}
I(R,r)=  1 - R^{-n} + c_n^{-2} R^{-n} r^{-n} \int_R^{r+1} A(r,1; s) \omega_n s^{n-1} ds,
\end{equation}
again the integral has upper limit $r+1$ since $A(r,1;s)\equiv 0$ for all $s\geq r+1$. 
Differentiating, we get
\begin{equation} \label{I_R_derivative}
I_R= n R^{-n-1} - nc_n^{-2} R^{-n-1} r^{-n} \int_R^{r+1} A(r,1; s) \omega_n s^{n-1}ds -c_n^{-2}\omega_n R^{-1} r^{-n} A(r,1; R) ,
\end{equation}
\begin{equation} \label{I_Rr_derivative}
\begin{split}
I_{Rr}&=  -c_n^{-2}\omega_n R^{-1} r^{-n} A_r(r,1; R)+nc_n^{-2}\omega_n R^{-1} r^{-n-1} A(r,1; R)\\
&\qquad +nc_n^{-2} R^{-n-1} r^{-n-1} \int_R^{r+1} \left(nA(r,1; s)-rA_r(r,1; s)\right)\omega_n s^{n-1}ds,
\end{split}
\end{equation}
where we have used that $A(r,1;r+1)=0$. We will use \eqref{I_R_derivative} and \eqref{I_Rr_derivative} to obtain explicit expressions of $U$ and $V$ in \eqref{eq:uw} and \eqref{eq:v}. Once this is done, since $I(R,r)$ is symmetric in $R, r$, a parallel argument gives that $W$ is equal to the right hand side of \eqref{eq:uw} (with $R$ and $r$ switched). Since $S(R,r)$ is also symmetric in $R, r$, we then have that $U=W$.
%

\noindent\textbf{Step 2. Partial derivatives of $A(r,1;s)$.} To prepare for the computations later, in this step we find an integral representation of the term $A(r,1;s)$ (which appears in the integrand of \eqref{I_def_R}), and find its first derivatives with respect to $r$ and $s$. Note that $s\in (R,r+1)$ and the assumption $|R-r| < 1 < R+r$ imply that $|s-r|<1<s+r$. In other words, there exists a triangle with side lengths $r, s, 1$.

Consider two balls with radius $r$ and $1$ centered at $O = (0,\textbf{0})$ and $P = (s, \textbf{0})$ respectively ($\textbf{0}$ denotes the zero vector in $\mathbb{R}^{n-1}$). By definition, $A(r,1;s)$ is the volume of the intersection of the two balls in $\mathbb{R}^n$. To compute $A(r,1;s)$, we take $Q$ to be any point on both spheres.  Let $l(r;s)$ be the distance from $Q$ to the $x_1$-axis, and $s_1(r;s)$ be the $x_1$ coordinate of $Q$. The definitions of $A(r,1;s), l(r;s)$ and $s_1(r;s)$ are illustrated in Figure~\ref{fig_circ}.
\begin{figure}[h!]
\begin{center}
\includegraphics[scale=0.9]{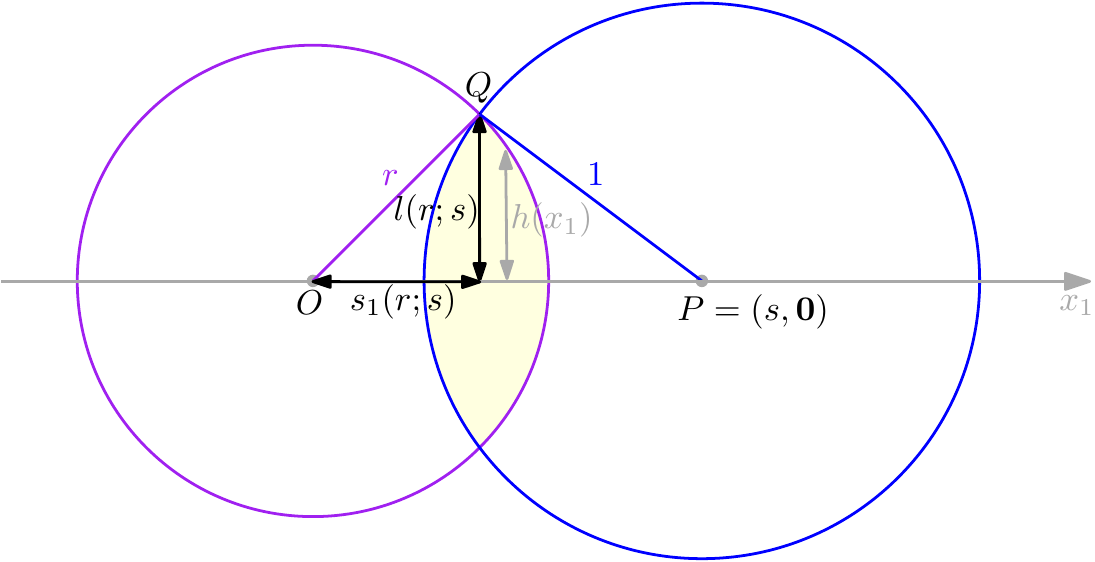}
\caption{\label{fig_circ}Illustration of the definitions of  $A(r,1; s)$, $l(r;s)$ and $s_1(r;s)$. For two balls in $\mathbb{R}^n$ with radii $r$ and $1$ respectively, $A(r,1;s)$ denotes the volume of their intersection in $\mathbb{R}^n$ (colored in yellow). Let $Q$ be any intersection of the two spheres. We define $l(r;s)$ as the distance from $Q$ to the $x_1$-axis, and $s_1(r;s)$ as the $x_1$ coordinate of $Q$.}
\end{center}
\end{figure}

By law of cosines, we have $\cos \angle QOP = \frac{r^2 + s^2-1}{2rs}$. This immediately yields 
    \begin{equation}\label{s1_comp}
    s_1(r;s)=r \cos \angle QOP = \frac{s^2+r^2-1}{2s},
    \end{equation}
    and 
    \[ l(r;s) = \sqrt{r^2-s_1^2(r;s)} = \frac{\sqrt{4r^2s^2-(s^2+r^2-1)^2}}{2s}.
    \]
We note that the area of the triangle $OPQ$ is given by
    \begin{equation}\label{eq:l&S}
        \frac{sl(r;s)}{2}=\frac{1}{4}S(s,r)^{1/2},    
    \end{equation}
where the right hand side is given by Herron's formula \eqref{def_S}.

    For any hyperplane in $\mathbb{R}^n$ perpendicular to the $x_1$-axis, its intersection with the yellow region is non-empty if and only if the $x_1$ coordinate of the hyperplane is between $s-1$ and $r$. In this case, the intersection is an $(n-1)$-dimension ball with radius $h(x_1)$, given by the following (the definition of $h(x_1)$ is also illustrated in Figure~\ref{fig_circ}):
    \[
      H(x_1)=\begin{cases}
           \sqrt{1-(s-x_1)^2} & \text{ for } s-1\le x_1\le s_1(r;s),\\
           \sqrt{r^2-x_1^2}&  \text{ for } s_1(r;s)\le x_1\le r.
       \end{cases}
    \]
    This allows us to express $A(r,1;s)$ as an integral in $x_1$:
    \begin{equation}\label{eqI_5}
      \begin{split}
       A(r,1;s) & =\int_{s-1}^{r}c_{n-1}H^{n-1}(x_1)dx_1\\
                   & =c_{n-1}\left(\int_{s-1}^{s_1(r;s)}(1-(s-x_1)^2)^{\frac{n-1}{2}}dx_1+\int_{s_1(r;s)}^{r}(r^2-x_1^2)^{\frac{n-1}{2}}dx_1\right)\\
                   & =c_{n-1}\left(\int_{s-s_1(r;s)}^{1}(1-y^2)^{\frac{n-1}{2}}dy+\int_{\frac{s_1(r;s)}{r}}^{1} r^n(1-y^2)^{\frac{n-1}{2}}dy\right).
       \end{split}
    \end{equation}
  We then compute the partial derivatives of $A$ as follows. (Note that $s_1(r;s)$ is a function of $r, s$, even though the dependence are compressed below for notational simplicity.)
    \begin{equation}\label{eq:As}
       \begin{split}
          A_s(r,1;s) & =c_{n-1}\left(-(1-(s-s_1)^2)^{\frac{n-1}{2}}\frac{\partial}{\partial s}(s-s_1)-r^n\left(1-\frac{s_1^2}{r^2}\right)^{\frac{n-1}{2}}\frac{\partial}{\partial s}\left(\frac{s_1}{r}\right)\right)\\
                          & =-c_{n-1}l^{n-1}\left(1-\frac{\partial s_1}{\partial s}+\frac{\partial s_1}{\partial s}\right)\\
                           & =-c_{n-1}l(r;s)^{n-1},
       \end{split}
    \end{equation}
    where in the second equality we used that $l(r;s) = \sqrt{1-(s-s_1)^2}$ and $l(r;s) = \sqrt{r^2-s_1^2}$. 
    
    Likewise, we compute $A_r$ as
    \begin{equation}\label{eq:Ar}
       \begin{split}
          A_r(r,1;s) & =c_{n-1}\left(-(1-(s-s_1)^2)^{\frac{n-1}{2}}\frac{\partial}{\partial r}(s-s_1)-r^n\left(1-\frac{s_1^2}{r^2}\right)^{\frac{n-1}{2}}\frac{\partial}{\partial r}\left(\frac{s_1}{r}\right) + \int_{s_1/r}^{1}nr^{n-1}(1-y^2)^{\frac{n-1}{2}}dy\right)\\
                         & =-c_{n-1}l(r;s)^{n-1}\left(-\frac{\partial s_1}{\partial r}+r\frac{1}{r}\frac{\partial s_1}{\partial r}-r\frac{s_1}{r^2}\right)+c_{n-1} \int_{s_1/r}^{1}nr^{n-1}(1-y^2)^{\frac{n-1}{2}}dy\\
                         & =c_{n-1}l(r;s)^{n-1}\frac{s_1(r;s)}{r}+c_{n-1} \int_{\frac{s_1(r;s)}{r}}^{1}nr^{n-1}(1-y^2)^{\frac{n-1}{2}}dy.
       \end{split}
    \end{equation}

\noindent\textbf{Step 3. Explicit formula for $U, W$.} In this step we prove \eqref{eq:uw}. As we discussed at the end of \textbf{Step 1}, it suffices to focus on $U$ and aim to show that
\[
U:=(R^{n+1}r^{n+1}I_{R})_R = \tilde c_n Rr \,S(R,r)^{\frac{n-1}{2}},
\]
and the identity for $W$ would follow from a parallel argument.

Using \eqref{I_R_derivative}, we get that
\begin{equation*}
\begin{split}
U&= nc_n^{-2}  r A(r,1; R) \omega_n R^{n-1} -nc_n^{-2}\omega_n R^{n-1} r A(r,1; R)-c_n^{-2}\omega_n R^{n} r A_R(r,1; R)\\
&=-c_n^{-2}\omega_n R^{n} r A_R(r,1; R).
\end{split}
\end{equation*}
By \eqref{eq:As}, we have $A_R(r,1,R) = -c_{n-1}l(r;R)^{n-1}$. Therefore,
\[
U =  \frac{c_{n-1} \omega_n}{c_n^2} rR^n l(r;R)^{n-1} = \frac{c_{n-1}}{2^{n-1} n c_n} Rr \,S(R,r)^{\frac{n-1}{2}}.
\]
where in the second step we used the fact $\omega_n = nc_n$ as well as the relationship $\frac{1}{2} R l(r;R) = \frac{1}{4}S(R,r)^{1/2}$ in \eqref{eq:l&S}, since both sides give the area of the triangle with side lengths $r,1,R$.

\noindent\textbf{Step 4. Explicit formula for $V$.} In this final step we aim to show \eqref{eq:v}. We compute $V$ as follows:
        \begin{equation}\label{eqI_9}
           \begin{split}
               V=R^{n+1}r^{n+1}I_{Rr}& =-c_n^{-2}\omega_n R^{n} r A_r(r,1; R)+nc_n^{-2}\omega_n R^{n} A(r,1; R)\\
&\qquad+nc_n^{-2} \int_R^{r+1} \left( nA(r,1; s)-rA_r(r,1; s)\right)\omega_n s^{n-1}ds\\
                  &=-\frac{\tilde c_n}{2}S(R,r)^{\frac{n-1}{2}}(R^2+r^2-1) -\frac{c_{n-1}}{c_n} r^nR^{n} \int_{\frac{s_1(r;R)}{r}}^{1}(1-y^2)^{\frac{n-1}{2}}dy \\ 
                &\qquad+\frac{1}{c_n} R^n A(r,1;R) +\frac{1}{c_n}\underbrace{ \int_{R}^{r+1}(n A(r,1;s)-rA_r(r,1;s) ) s^{n-1}ds}_{=:\Gamma},
              \end{split}
              \end{equation}
              where we used \eqref{I_Rr_derivative} in the first equality, and used \eqref{eq:Ar} (with $s$ replaced by $R$), \eqref{eq:l&S}, the definition of $\tilde c_n$, and the fact that $s_1(r;R)=\frac{R^2+r^2-1}{2R}$ in the second equality.
                            
           Let us express $A(r,1,R)$ using its integral formulation as in \eqref{eqI_5} (with $s$ replaced by $R$), and plug it into the above equation. $V$ then becomes
                          \begin{equation}\label{eq:V2}
                \begin{split}
                 V    & = -\frac{\tilde c_n}{2}S(R,r)^{\frac{n-1}{2}}(R^2+r^2-1) +\frac{c_{n-1}}{c_n} R^{n} \int_{R-s_1(r;R)}^{1}(1-y^2)^{\frac{n-1}{2}}dy +\frac{1}{c_n} \Gamma.
           \end{split}
        \end{equation}
        Comparing \eqref{eq:V2} with \eqref{eq:v}, all we need to show is that 
        \begin{equation}\label{eq:goal_gamma}
        \Gamma = -c_{n-1}R^{n}\int_{R-s_1(r;R)}^{1}(1-y^2)^{\frac{n-1}{2}}dy.
        \end{equation}
         By \eqref{eqI_5} and \eqref{eq:Ar}, the parenthesis in the integrand in $\Gamma$ can be written as 
        \[
           \begin{split}
              nA(r,1;s)-rA_r(r,1;s) 
                            & =c_{n-1}\left( n\int_{s-s_1(r;s)}^{1}(1-y^2)^{\frac{n-1}{2}}dy- l(r;s)^{n-1}s_1(r;s) \right),
           \end{split}
        \]
        thus 
        \begin{equation}\label{gamma_temp}
           \begin{split}
              \Gamma & =c_{n-1}\underbrace{\int_{R}^{r+1} ns^{n-1}\int_{s-s_1(r;s)}^{1}(1-y^2)^{\frac{n-1}{2}}dyds}_{=: \Gamma_1} -c_{n-1}\int_{R}^{r+1}s^{n-1} l(r;s)^{n-1} s_1(r;s)ds.
           \end{split}
        \end{equation}
         We can then use integration by parts to express $\Gamma_1$ as follows:
        \[
          \begin{split}
             \Gamma_1 
                              & =\left(s^n\int_{s-s_1(r;s)}^{1}(1-y^2)^{\frac{n-1}{2}}dy\right)\bigg |_{R}^{r+1}-\int_{R}^{r+1}s^{n}\frac{\partial}{\partial s}\left(\int_{s-s_1(r;s)}^{1}(1-y^2)^{\frac{n-1}{2}}dy\right)ds\\
                               & =-R^n\int_{R-s_1(r;R)}^{1}(1-y^2)^{\frac{n-1}{2}}dy+\int_{R}^{r+1}s^n\big(1-(s-s_1(r;s))^2\big)^{\frac{n-1}{2}}\frac{\partial}{\partial s}(s-s_1(r;s))ds\\
                               & =-R^n\int_{R-s_1(r;R)}^{1}(1-y^2)^{\frac{n-1}{2}}dy+\int_{R}^{r+1}s^n l(r;s)^{n-1} \frac{s^2+r^2-1}{2s^2}ds\\
                                & =-R^n\int_{R-s_1(r;R)}^{1}(1-y^2)^{\frac{n-1}{2}}dy+\int_{R}^{r+1}s^{n-1} l(r;s)^{n-1} s_1(r;s) ds,
          \end{split}
        \]
        where we used $s_1(r;r+1)=r$ in the second equality, the fact that $l(r;s) = \sqrt{1-(s-s_1)^2}$ and \eqref{s1_comp} in the third equality.
     Finally, plugging the above expression of $\Gamma_1$ into \eqref{gamma_temp} gives \eqref{eq:goal_gamma}, thus finishes the proof.
   \end{proof}

\section{Non-uniqueness of steady states for $1<m<2$}
In this section, for $1<m<2$, given any attractive potential $W_0$ satisfying \textup{(W1)} and \textup{(W2)} with  $k>-n(m-1)$ and $k\geq -n+2$, we aim to modify its tail to construct a new attractive potential that has infinitely many radially decreasing steady states in $\mathcal{P}(\mathbb{R}^n)$. The reason for the two extra assumptions $k>-n(m-1)$ (i.e. \eqref{eq:evolution} is in the diffusion-dominated regime) and $k\geq -n+2$ (i.e. $W$ is no more singular than the Newtonian potential at the origin) is that even though the result only deals with steady states, our proof strategy however requires the dynamical solution to \eqref{eq:evolution} to exist globally in time. Under these extra assumptions, it was established in \cite{BRB} that for any initial data $\rho_0 \in \mathcal{P}(\mathbb{R}^n)\cap L^\infty(\mathbb{R}^n)$, there exists a unique global-in-time weak solution $\rho(\cdot, t)$ to \eqref{eq:evolution}, which satisfies the Energy Dissipation Inequality: for any $t>0$, we have
\begin{equation}\label{edi}
 \mathcal{E}[\rho(t)] + \int_0^t \mathcal{D}[\rho(t)] dt \leq \mathcal{E}[\rho_0],
\end{equation}
where $\mathcal{D}[\rho] = \int_{\mathbb{R}^n} \rho |\nabla(\frac{m}{m-1}\rho^{m-1}+W*\rho)|^2dx$.

\subsection{Modifying the tail}
Let us take any attractive potential $W$ satisfying \textup{(W1)} and \textup{(W2)} with $k>-n(m-1)$ and $k\geq -n+2$. Assume $W$ has a radial steady state $\rho_s$ that is supported in some $B(0,R)$. Note that no matter how we modify $W$ outside $B(0,2R)$, $\rho_s$ is still a steady state, since $\sup_{x,y\in\supp\rho_s} \text{dist}(x,y)\leq 2R$. With this in mind, we will modify $W$ as follows to obtain a new steady state. Let $\eta:\mathbb{R}\to \mathbb{R}$ be a smooth cut-off function, such that 
\begin{equation}\label{def_eta}
\eta(s) = \begin{cases}
0 & \text{ for }s\leq 0\\
1 & \text{ for }s\geq 1,
\end{cases}
\end{equation}
 and $\eta$ is strictly increasing in $(0,1)$ with $\eta' < 2$. We then modify $W(x)$ into 
\begin{equation}\label{def_w0}
W_{R,\epsilon}(x) := w_1(x)+ w_2(x),
\end{equation}
 where $w_1$ and $w_2$ are both radial, and by a slight abuse of notation we define them below as functions of the radial variable $r$. Let their derivatives be given by
\begin{equation}\label{def_w1}
w_1'(r) := W'(r) \left(1-\eta\left(\frac{r-2R}{R}\right)\right)\quad \text{ for all }r>0,
\end{equation}
and
\begin{equation}\label{def_w2}
w_2'(r) := \epsilon \eta\left(\frac{r-2R}{R}\right) \quad \text{ for all }r>0,
\end{equation}
where $\epsilon>0$ is a small constant to be determined later. Note that $w_1'(r)=W'(r)$ and $w_2'(r)\equiv 0$ for all $r\in(0,2R]$. Let us set in addition that $w_1(r)=W(r)$ and $w_2(r)\equiv 0$ in $(0,2R]$. With such definition, $W_{R,\epsilon}(r)$ is identical to $W(r)$ for $r\in [0,2R]$, and it is smooth and strictly increasing for $r\in(0,\infty)$. Note that \eqref{def_w1} and \eqref{def_w2} give that $w_1$ is constant for $r\in [3R,+\infty)$, and $w_2'(r) \equiv \epsilon$ for $r>3R$. A sketch of $W_{R,\epsilon}$ is given in Figure~\ref{W_fig}.

\begin{figure}[h!]
\begin{center}
\includegraphics[scale=1.1]{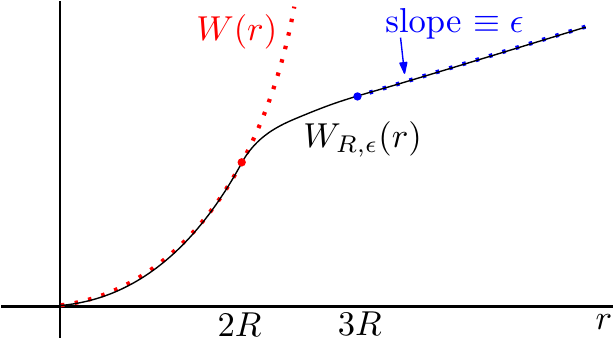}
\caption{Illustration of $W$ (red dotted curve) and the modified $W_{R,\epsilon}$ (black curve).\label{W_fig}}
\end{center}
\end{figure}

%
%
%

\subsection{Heuristics for non-uniqueness}
For the interaction potential $W_{R,\epsilon}$ defined in \eqref{def_w0}, our goal is to show that if $\epsilon>0$ is sufficiently small, then there exists another steady state which is ``flatter'' than $\rho_s$. Before the rigorous proof in the next subsection, let us first outline some formal heuristics here.

To see the motivation of modifying $W$ into $W_{R,\epsilon}$, let us first consider the extreme case $\epsilon=0$, where $ W_{R,\epsilon} = w_1$ is constant in $[3R,\infty)$. In this case, we can subtract $W_{R,\epsilon} $ by a constant such that $W_{R,\epsilon}(3R)=0$. In this way, $W_{R,\epsilon} \in L^1(\mathbb{R}^n)$ since it is compactly supported in $B(0,3R)$. Note that the solution to \eqref{eq:evolution} remains unaffected as we add/subtract a constant to the interaction potential.

For an $L^1$ potential, the following formal scaling argument suggests that it is energy favorable for a sufficiently ``flat'' initial data to continue spreading. To see this, let us take any continuous function $\rho$ with finite mass, consider its dilation $\rho_\lambda(x) := \lambda^n \rho(\lambda x)$, and check whether $\mathcal{E}[\rho_\lambda]$ is increasing or decreasing in $\lambda$ as $\lambda \to 0^+$. Under the dilation, we have (see \cite[Section 2.3.1]{CCY} for a derivation)
\[
\mathcal{E}[\rho_\lambda] =\lambda^{(m-1)n} \int_{\mathbb{R}^n} \frac{1}{m-1}\rho^m dx  + \frac{\lambda^n}{2} \left(\int_{\mathbb{R}^n} W_{R,\epsilon} dx\right) \int_{\mathbb{R}^n} \rho^2 dx + o(\lambda^n).
\]
When $m<2$, since $\lambda^{(m-1)n} \gg \lambda^n$ for $\lambda \ll 1$, we have that $\mathcal{E}[\rho_\lambda]$ is increasing in $\lambda$ for $\lambda \ll 1$, meaning that it is energy favorable for a sufficiently flat solution to become even flatter. As a result, for $\epsilon=0$ we expect that sufficiently flat initial data should spread to infinity as $t\to\infty$. 

Instead of setting $\epsilon=0$, we will actually work with a sufficiently small $0<\epsilon\ll 1$. On the one hand, using the smallness of $\epsilon$, we will show that solutions with sufficiently flat initial data will remain flat for all time, thus $\rho(\cdot,t)$ cannot return to $\rho_s$ as $t\to\infty$. On the other hand, for any $\epsilon>0$, it is not possible for $\rho(\cdot,t)$ to spread to infinity either: since $W(r)$ grows linearly in $r$ for large $r$, we will show that the first moment of $\rho(\cdot,t)$ is uniformly bounded in time. Putting these two pieces together will give us existence of another steady state that is flatter than $\rho_s$.

\subsection{Proving non-uniqueness by tracking the $L^{3-m}$ norm}

In the following lemma, we show that when $\epsilon$ is sufficiently small in the modified potential $W_{R,\epsilon}$, solutions with sufficiently flat initial data will remain flat for all time. Here we control the ``flatness'' of a solution by tracking the evolution of its $L^{3-m}$ norm. We choose this norm due to the technical reason that the degenerate diffusion term gives exactly $-c(m)\|\nabla \rho\|_2^2$, making the computation easier. 

\begin{lemma}\label{lem_1}
Let $1<m<2$, and let $W$ satisfy \textup{(W1)} and \textup{(W2)} with $k>-n(m-1)$ and $k\geq -n+2$. Given any $R>0, a>0$, there exists some $\delta_0 \in (0,a)$ and $\epsilon \in (0,1)$ depending on $W, R, a$ and $n$, such that the solution $\rho(x,t)$ to \eqref{eq:evolution} with interaction potential $W_{R,\epsilon}$ defined in \eqref{def_w0}--\eqref{def_w2} satisfies the following:  For any initial data with $\rho_0\in \mathcal{P}(\mathbb{R}^n) \cap L^\infty(\mathbb{R}^n)$ and $\|\rho_0\|_{3-m}<\delta_0$, the solution $\rho(\cdot,t)$ satisfies   \begin{equation}
      \|\rho(\cdot, t)\|_{3-m}< \delta_0 \quad\text{ for all } t>0.
   \end{equation}
\end{lemma}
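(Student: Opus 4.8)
The strategy is a differential-inequality argument for the quantity $y(t) := \|\rho(\cdot,t)\|_{3-m}^{3-m} = \int \rho^{3-m}\,dx$. I would compute $\frac{d}{dt} y(t)$ along the solution using the equation \eqref{eq:evolution}, integrating by parts. The diffusion term $\Delta \rho^m$ contributes (after multiplying by $(3-m)\rho^{2-m}$ and integrating by parts) a term proportional to $-\int \rho^{2-m}\nabla\rho^m\cdot\nabla(\rho^{2-m})\,dx$, which simplifies to a negative multiple of $\int|\nabla \rho|^2\,dx$ — this is precisely why the exponent $3-m$ is chosen, as the paper notes. The interaction term $\nabla\cdot(\rho\nabla(W_{R,\epsilon}*\rho))$ contributes, after integration by parts, something like $-c\int \rho^{2-m}\nabla\rho\cdot\nabla(W_{R,\epsilon}*\rho)\,dx$, which I can rewrite as $c'\int \rho^{3-m}\Delta(W_{R,\epsilon}*\rho)\,dx$ modulo handling the singularity/regularity of $W_{R,\epsilon}$ near the origin. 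Using (W2) with $k\geq -n+2$ (no more singular than Newtonian) and the structure of $W_{R,\epsilon}$, the "bad" part $\Delta W_{R,\epsilon}$ near the origin is controlled, while $w_2$ contributes a term of size $O(\epsilon)$ times a power of $\|\rho\|$; away from the origin $\Delta W_{R,\epsilon}$ is bounded. Thus one arrives at an inequality of the schematic form
\[
\frac{d}{dt} y(t) \leq -c_1 \int_{\mathbb{R}^n} |\nabla \rho|^2\,dx + (\text{terms involving } W_0\text{'s core}) + C\epsilon\, \big(\text{power of } y\big).
\]

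Next I would use a Gagliardo–Nirenberg–Sobolev interpolation to absorb the interaction contributions into the good negative diffusion term $-c_1\|\nabla\rho\|_2^2$, so long as $y(t)$ is small. Concretely, the dangerous interaction terms can be bounded by $\|\nabla\rho\|_2^\alpha$ times a power of $\|\rho\|_{3-m}$ (or $\|\rho\|_1=1$) with $\alpha<2$, by GNS inequalities valid in the diffusion-dominated regime $m>1-k/n$; then Young's inequality splits off $\frac{c_1}{2}\|\nabla\rho\|_2^2$ plus a term that is a strictly positive power of $y(t)$ with a constant we can make small by taking $\delta_0$ and $\epsilon$ small. The upshot should be: there exist $\delta_0\in(0,a)$ and $\epsilon\in(0,1)$ such that whenever $y(t) < \delta_0^{3-m}$, we have $\frac{d}{dt} y(t) < 0$ (or at least $\frac{d}{dt}y \le 0$ at the threshold). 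A barrier/continuity argument then shows $y(t)$ can never cross the level $\delta_0^{3-m}$ starting below it, giving $\|\rho(\cdot,t)\|_{3-m} < \delta_0$ for all $t>0$. The global existence of the weak solution satisfying the energy dissipation inequality \eqref{edi}, together with enough regularity to justify the differential identity (or else run the argument on a suitable approximating/regularized sequence and pass to the limit), is what lets us carry this out rigorously.

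The main obstacle, I expect, is two-fold. First, rigorously justifying the computation of $\frac{d}{dt}\int\rho^{3-m}\,dx$ and the integrations by parts: the weak solution from \cite{BRB} may not a priori have $\nabla\rho\in L^2$ or enough integrability, so one likely needs to regularize (e.g. via the $L^{3-m}$-renormalization already implicit in the choice of exponent, or parabolic regularization) and then pass to the limit using lower semicontinuity of $\int|\nabla\rho|^2$ and the dissipation bound from \eqref{edi}. Second, extracting the precise sign and size of the interaction contribution: one must carefully separate the part of $W_{R,\epsilon}$ coming from $W_0$ near the origin (which is fixed and must be dominated using the GNS inequality — this is where $k>-n(m-1)$ is essential) from the part coming from $w_2'\equiv\epsilon$ at infinity (which is $O(\epsilon)$ and can be made negligible), and from the tail of $w_1$ (bounded $\Delta$, easy). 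Getting a clean inequality $\frac{d}{dt}y \le -c_1\|\nabla\rho\|_2^2 + C(\delta_0^\theta + \epsilon)\|\nabla\rho\|_2^{\beta} + \ldots$ with $\beta<2$ so that smallness of $\delta_0$ and $\epsilon$ wins is the crux; the remaining barrier argument is routine.
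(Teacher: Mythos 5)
Your overall architecture matches the paper's: compute $\frac{d}{dt}\int\rho^{3-m}\,dx$, obtain the good diffusion term $-c\|\nabla\rho\|_2^2$, split the interaction into the $w_1$ (core) and $w_2$ (modified tail) contributions, and close with a barrier argument at the first time $\|\rho(t_1)\|_{3-m}=\delta_0$. The $w_2$ part of your plan (integrate by parts to $\Delta w_2 *\rho$, use $\|\Delta w_2\|_\infty\lesssim\epsilon$, then GNS and the smallness of $\epsilon$) is essentially identical to what the paper does. The gap is in your treatment of the $w_1$ contribution.

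You propose rewriting the interaction term as $c'\int\rho^{3-m}\Delta(W_{R,\epsilon}*\rho)\,dx$ and controlling it by GNS plus Young's inequality, splitting off $\frac{c_1}{2}\|\nabla\rho\|_2^2$ plus a power of $y(t)$. This faces two concrete problems for the $w_1$ part. First, under (W2) with $k\geq -n+2$ one admits Newtonian-level singularity, so $\Delta w_1$ is a distribution (e.g.\ a Dirac mass at the origin), and the integral $\int\rho^{3-m}(\Delta w_1 *\rho)\,dx$ then produces a term of the type $\int\rho^{4-m}\,dx$ that carries no $\|\rho\|_{3-m}$ prefactor. Second, and more decisively, your Young's-inequality split yields $|I_1|\leq\eta\|\nabla\rho\|_2^2+C_\eta\,\delta_0^\gamma$ with $\eta$ independent of $\delta_0$; to conclude $\frac{d}{dt}y(t_1)<0$ you would then need to dominate the residual $C_\eta\,\delta_0^\gamma$ by the dissipation $\|\nabla\rho(t_1)\|_2^2$, which requires an additional GNS lower bound $\|\nabla\rho(t_1)\|_2^2\gtrsim\delta_0^{2/\theta}$ and a non-obvious comparison of exponents $\gamma$ vs.\ $2/\theta$. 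That comparison is never verified and may fail, because the GNS bound you invoke (with $\|\rho\|_1=1$) gives $\|\nabla\rho\|_2^{\alpha}$ with $\alpha<2$ but with no factor of $\|\rho\|_{3-m}$ to make the coefficient small.

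The paper sidesteps both issues by \emph{not} integrating by parts on $w_1$: it applies a three-function H\"older inequality with exponents $\frac{2-m}{3-m}+\frac{1}{2}+\frac{1}{p}=1$ to get
$|I_1|\leq \|\rho\|_{3-m}^{2-m}\,\|\nabla\rho\|_2\,\|w_1*\nabla\rho\|_p$, and then Young's convolution inequality $\|w_1*\nabla\rho\|_p\leq\|w_1\|_q\|\nabla\rho\|_2$ with $q=3-m$ (the integrability $\|w_1\|_{3-m}<\infty$ is exactly where $k>-n(m-1)$ is used). This keeps $\nabla\rho$ inside the convolution, avoids $\Delta w_1$ entirely, and yields $|I_1|\leq C(W,R)\,\|\rho\|_{3-m}^{2-m}\,\|\nabla\rho\|_2^2$: the power of $\|\nabla\rho\|_2$ is exactly $2$, and the smallness comes from the explicit $\delta_0^{2-m}$ prefactor, so no numerical Young's split and no exponent comparison is needed. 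This is the key estimate your sketch is missing; without it (or a verified substitute) the barrier argument does not close.
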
 
\begin{proof}
The evolution of $\int_{\mathbb{R}^n} \rho(x,t)^{3-m}dx$ in time can be computed as follows.  Multiplying \eqref{eq:evolution} by $\rho^{2-m}$ and integrating in $\mathbb{R}^n$, we have 
   \[
      \int_{\mathbb{R}^n}\rho^{2-m}\rho_tdx=\int_{\mathbb{R}^n} \rho^{2-m}\Delta \rho^mdx+\int_{\mathbb{R}^n} \rho^{2-m}\nabla \cdot (\rho \nabla (W_{R,\epsilon}*\rho))dx.
   \]
   Integration by parts gives 
   \begin{equation}\label{eq1_1}
     \begin{split}
      \frac{1}{3-m}\frac{d}{dt} \int_{\mathbb{R}^n}\rho^{3-m}dx & =-\int_{\mathbb{R}^n}\nabla \rho^{2-m}\cdot \nabla \rho^mdx-\int_{\mathbb{R}^n}\nabla \rho^{2-m}\cdot (\rho \nabla (W_{R,\epsilon}*\rho)) dx\\
                                                                                                & =-(2-m)m \|\nabla \rho\|_2^2-(2-m) \underbrace{\int_{\mathbb{R}^n}\rho^{2-m}\nabla \rho \cdot  \nabla (W_{R,\epsilon}*\rho) dx}_{=: I(t)},\\
      \end{split}
   \end{equation}
   \vspace*{-0.3cm}
where 
\[
   \begin{split}
      I(t) 
        & =\int_{\mathbb{R}^n}\rho^{2-m}\nabla \rho \cdot  \nabla (w_1*\rho) dx+\int_{\mathbb{R}^n}\rho^{2-m}\nabla \rho \cdot  \nabla (w_2*\rho) dx =:I_1(t)+I_2(t).
   \end{split}
\]

Before we continue to estimate $I_1$ and $I_2$, let us first point out some properties of the functions $w_1$ and $w_2$ defined in \eqref{def_w1} and \eqref{def_w2}. Without loss of generality, we can subtract $w_1$ by a constant (which does not change the solution $\rho(\cdot,t)$) such that $w_1(3R)=0$. In this way, we have $\supp w_1 \subset B(0,3R)$. Since $w_1$ also satisfies (W2) near the origin, 
\begin{equation}\label{eqw_1}
   \|w_1\|_q \le C(W,R)\quad\text{ for all  $q\geq 1$ such that $qk>-n$},
   \end{equation} where $C$ only depends on $W$ and $R$.
As for $w_2$, we have 
\begin{equation}\label{estimate_w2}
\begin{split}
\|\Delta w_2\|_\infty &\leq \sup_{r\geq 2R} \left(w_2''(r) + \frac{n-1}{r} w_2'(r)\right)\\
&\leq \epsilon\left(\frac{1}{2R} \|\eta'\|_\infty + \frac{n-1}{R} \|\eta\|_\infty \right) \leq \frac{n}{R} \epsilon,
\end{split}
\end{equation}
where we used $\|\eta'\|_\infty\leq 2$ and $\|\eta\|_\infty\leq 1$ in the last inequality. 

Now let us estimate $I_1$. By H\"older's inequality (for three functions), we have
\begin{equation}\label{i1_estimate1}
\begin{split}
|I_1|               & \le \int_{\mathbb{R}^n}\rho^{2-m}|\nabla \rho|| \nabla( w_1* \rho)| \,dx\\[0.1cm]
&\leq \|\rho^{2-m}\|_{\frac{3-m}{2-m}} \|\nabla \rho\|_2 \|w_1 * \nabla \rho\|_p\\[0.2cm]
&\leq \|\rho\|_{3-m}^{2-m} \|\nabla \rho\|_2 \|w_1 * \nabla \rho\|_p,
\end{split}
\end{equation}
where $p$ is such that $\frac{2-m}{3-m} + \frac{1}{2} + \frac{1}{p} = 1$, which leads to $p = \frac{2(3-m)}{m-1}$. Note that our assumption $m\in (1,2)$ gives that $p>2$, thus Young's inequality and \eqref{eqw_1} yield that
%
\begin{equation}\label{tempasdf}
   \| w_1* \nabla\rho\|_p\le C\|w_1\|_q\| \nabla \rho\|_2, 
\end{equation}
where $q$ is such that $\frac{1}{q}+\frac{1}{2}=\frac{1}{p}+1$, that is, $q=3-m$. Let us check that $qk>-n$: using the assumption $k>-n(m-1)$ we have $qk > -n(m-1)(3-m) > -n$, where the last inequality follows from $m\in(1,2)$. This allows us to use \eqref{eqw_1} to control $\|w_1\|_q$, thus \eqref{tempasdf} becomes \[
 \| w_1* \nabla\rho\|_p
 \le C(W,R) \| \nabla \rho\|_2,
\]
 and plugging this into \eqref{i1_estimate1} gives 
\begin{equation}\label{I1_temp}
   |I_1|\le C(W,R) \|\rho\|_{3-m}^{2-m}\|\nabla \rho\|^2_{2}.
\end{equation}

In the rest of the proof, we fix $\delta_0 \in (0,a)$ such  that $C(W, R)\delta_0^{2-m}<\frac{m}{4}$, with $C(W,R)$ as in \eqref{I1_temp}. Since $m\in (1,2)$, there indeed exists such a $\delta_0$.  Our goal is to show that for sufficiently small $\epsilon>0$  (to be fixed later),   all solutions with initial data $\|\rho_0\|_{3-m} < \delta_0$ satisfy that $\|\rho(t)\|_{3-m}< \delta_0$ for all $t>0$. To show this, it suffices to look at the first time that $\|\rho(t)\|_{3-m}=\delta_0$ (call it $t_1$), and aim to show that $\frac{d}{dt}\int \rho(x,t)^{3-m}dx \big |_{t=t_1}< 0$.

From our choice of $\delta_0$ and the definition of $t_1$, at $t=t_1$ we can control $I_1$ as 
\begin{equation}\label{eqI_1}
   |I_1(t_1)|\le C(W, R)\delta_0^{2-m}\|\nabla \rho (t_1)\|^2_{2} \le \frac{m}{4}\|\nabla \rho(t_1)\|^2_{2}.
\end{equation}
%
Next we move on to $I_2$. We first rewrite $I_2$ as
\[
   \begin{split}
      I_2  
             =\frac{1}{3-m}\int_{\mathbb{R}^n}\nabla \rho^{3-m} \cdot  (\nabla w_2*\rho) dx =-\frac{1}{3-m}\int_{\mathbb{R}^n} \rho^{3-m}  (\Delta w_2*\rho) dx.
   \end{split}
\]
Let us apply the Young's inequality to $I_2$, and combine it with \eqref{estimate_w2} and $\|\rho\|_1=1$:
\[
  \begin{split}
   |I_2| 
           & \le \frac{1}{3-m}\|\rho\|^{3-m}_{3-m}\|\Delta w_2\|_{\infty}\|\rho\|_{1} \le \frac{C(n,R)\epsilon}{3-m}\|\rho\|^{3-m}_{3-m}.
   \end{split}
\]
By the Gagliardo--Nirenberg inequality, we have 
\[
   \|\rho\|_{3-m}\le C(n)\|\nabla \rho\|_{2}^{\theta}\|\rho\|_{1}^{1-\theta}=C(n)\|\nabla \rho\|_{2}^{\theta},
\]
where $\theta=\frac{2n(2-m)}{(3-m)(2+n)}\in (0,1)$. Combining this inequality with the assumption that $\|\rho(t_1)\|_{3-m}=\delta_0$, we have 
\[
  \begin{split}
   |I_2(t_1)| & \le \frac{C(n,R)\epsilon}{3-m}\|\rho(t_1)\|^{3-m-\frac{2}{\theta}}_{3-m} \|\rho(t_1)\|^{\frac{2}{\theta}}_{3-m}
                   \le \frac{C(n,R)\epsilon\delta_0^{3-m-\frac{2}{\theta}}}{3-m}\|\nabla \rho (t_1)\|^{2}_{2}.
   \end{split}
\]
Now we can choose $\epsilon>0$ sufficiently small such that $\frac{C(n,R)\epsilon\delta_0^{3-m-\frac{2}{\theta}}}{3-m}<\frac{m}{4}$, leading to 
\begin{equation}\label{eqI_2}
   |I_2(t_1)|\le \frac{m}{4}\|\nabla \rho (t_1)\|^{2}_{2}.
\end{equation}
Finally, combining \eqref{eqI_1} and \eqref{eqI_2}, we have $| I(t_1) |\le \frac{m}{2}\|\nabla \rho (t_1)\|^{2}_{2}$. Apply this to (\ref{eq1_1}) gives
\[
 \frac{d}{dt} \int_{\mathbb{R}^n}\rho^{3-m}(x,t)dx\,\Big|_{t=t_1}\le -\frac{m(3-m)(2-m)}{2}\|\nabla \rho (t_1)\|^{2}_{2}<0,
\]
which contradicts the definition of $t_1$, thus finishing the proof of the lemma.
%
\end{proof}

In the next lemma, we show that the modified potential $W_{R,\epsilon}$ in Lemma~\ref{lem_1} indeed leads to a radial steady state $\rho_s$ with $\|\rho_s\|_{3-m} \leq \delta_0$. As we will see later in the proof of the main theorem, we will apply Lemma~\ref{lemma2} iteratively to construct an interaction potential with infinitely many steady states.

\begin{lemma}\label{lemma2}
Let $1<m<2$, and let $W$ satisfy \textup{(W1)} and \textup{(W2)} with $k>-n(m-1)$ and $k\geq -n+2$. Given any $R>0, a>0$, let $\delta_0 \in (0,a)$ and $\epsilon \in (0,1)$ be as given in Lemma~\ref{lem_1}. Then the equation \eqref{eq:evolution} with interaction potential $W_{R,\epsilon}$ has a compactly supported radially decreasing steady state $\rho_s\in \mathcal{P}(\mathbb{R}^n) \cap L^\infty(\mathbb{R}^n)$ with  $\|\rho_s\|_{3-m} \leq \delta_0$.
\end{lemma}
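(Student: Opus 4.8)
The goal is to exhibit a compactly supported radially decreasing steady state $\rho_s$ for $W_{R,\epsilon}$ with $\|\rho_s\|_{3-m}\le\delta_0$. My strategy is to run the dynamics from a well-chosen flat initial datum and extract a steady state in the large-time limit. First I would pick an initial datum $\rho_0\in\mathcal{P}(\mathbb{R}^n)\cap L^\infty(\mathbb{R}^n)$, radially decreasing, with $\|\rho_0\|_{3-m}<\delta_0$ (e.g. a normalized indicator of a large ball, or a smooth radial bump spread over a large region — such a datum exists since $\|\chi_\lambda/(c_n\lambda^n)\|_{3-m}=(c_n\lambda^n)^{-1+1/(3-m)}\to 0$ as $\lambda\to\infty$ because $3-m>1$). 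By the global well-posedness result from \cite{BRB} quoted at the start of this section (which applies since $W_{R,\epsilon}$ still satisfies (W1)–(W2) with the same $k$, and additionally has linear growth, hence also satisfies the confinement hypotheses), there is a unique global weak solution $\rho(\cdot,t)$ with the Energy Dissipation Inequality \eqref{edi}. By Lemma~\ref{lem_1}, $\|\rho(\cdot,t)\|_{3-m}<\delta_0$ for all $t>0$, and one checks the solution stays radially decreasing for all time (this is propagated by the equation, or can be taken as known from the construction in \cite{BRB}; alternatively, since all bounded steady states and the relevant comparison arguments respect radial symmetry, the radial class is invariant).

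Next I would use \eqref{edi} to get compactness. The key point is that $\mathcal{E}[\rho(t)]\le \mathcal{E}[\rho_0]<\infty$ and, because $W_{R,\epsilon}(r)$ grows linearly at infinity while $\mathcal{E}$ is bounded above along the flow, the first moment $\int_{\mathbb{R}^n}|x|\rho(x,t)\,dx$ stays bounded uniformly in $t$ — indeed, writing $W_{R,\epsilon}=W_{R,\epsilon}(3R)+(\text{nonneg. part with linear lower bound for large }r)$ and using that the interaction energy is bounded below in terms of this growth minus a controlled local piece (the singular part near $0$ is handled by (W2), $k>-n(m-1)$, i.e. the diffusion-dominated regime, exactly as in the existence theory), one gets tightness of the family $\{\rho(\cdot,t)\}_{t\ge0}$. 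Combined with the uniform $L^\infty$ bound (propagated, since the drift $\nabla W_{R,\epsilon}*\rho$ is bounded and one has the standard maximum-principle-type bound for \eqref{eq:evolution}), one extracts a sequence $t_j\to\infty$ with $\rho(\cdot,t_j)\rightharpoonup\rho_s$ weakly, $\rho_s\in\mathcal{P}(\mathbb{R}^n)\cap L^\infty(\mathbb{R}^n)$ radially decreasing, and (by weak lower semicontinuity of the $L^{3-m}$ norm) $\|\rho_s\|_{3-m}\le\delta_0$.

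Then I must show $\rho_s$ is genuinely a steady state in the sense of Definition~\ref{def:gendef}. Here I would use the dissipation term in \eqref{edi}: since $\int_0^\infty\mathcal{D}[\rho(t)]\,dt\le\mathcal{E}[\rho_0]-\inf\mathcal{E}<\infty$, along a further subsequence of times $\mathcal{D}[\rho(\cdot,t_j)]\to 0$, i.e. $\int\rho|\nabla(\tfrac{m}{m-1}\rho^{m-1}+W_{R,\epsilon}*\rho)|^2\,dx\to0$. Passing to the limit (using the $L^\infty$ and radial-monotonicity bounds to upgrade weak convergence to strong $L^p_{loc}$ convergence of $\rho(\cdot,t_j)$, hence convergence of $\nabla\rho^m$ and of $W_{R,\epsilon}*\rho$ and its gradient), one concludes $\rho_s|\nabla(\tfrac{m}{m-1}\rho_s^{m-1}+W_{R,\epsilon}*\rho_s)|=0$ a.e., which is precisely \eqref{critp}, i.e. $\nabla\rho_s^m=-\rho_s\nabla W_{R,\epsilon}*\rho_s$ in the sense of distributions. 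The regularity/integrability requirements in Definition~\ref{def:gendef} ($\nabla\rho_s^m\in H^1_{loc}$, $\nabla W_{R,\epsilon}*\rho_s\in L^1_{loc}$, and Hölder regularity when $k\in(-n,1-n)$ — though here $k\ge -n+2\ge 1-n$, so the extra Hölder requirement is vacuous) follow from Lemma~\ref{lemma_stat} and Lemma~\ref{prop:regularity} applied a posteriori to $\rho_s$. Finally, Lemma~\ref{lemma_stat}(b,d) gives that $\rho_s$ is radially decreasing and compactly supported (the latter because $m\ge2$ is \emph{not} available here, but $W_{R,\epsilon}(r)\to\infty$, so part (d) still applies via the growth alternative).

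The main obstacle, I expect, is the passage to the limit producing a \emph{nontrivial} steady state with the right properties: one must be careful that (i) no mass escapes to infinity, handled by the first-moment bound coming from the linear growth of $W_{R,\epsilon}$; (ii) the limit has exactly mass $1$, which follows from tightness; and (iii) the limit actually satisfies the stationary equation rather than merely being a weak limit — this is where the dissipation integral being finite, hence vanishing along a subsequence, is essential, together with enough compactness to pass to the limit in the nonlinear terms. The $L^{3-m}$ bound itself transfers for free by weak lower semicontinuity, so that part is painless; the real work is the standard-but-delicate large-time extraction of a stationary solution, which I would carry out following the well-established scheme for this class of equations (e.g. as in \cite{BRB,CCV15}).
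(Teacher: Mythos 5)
Your proposal is correct and follows essentially the same route as the paper: evolve from a flat radial datum, use Lemma~\ref{lem_1} to propagate the $L^{3-m}$ bound, use linear growth of $W_{R,\epsilon}$ plus the energy bound to get a uniform first-moment (tightness) bound, use a uniform $L^\infty$ bound, and then use the finiteness of the dissipation integral in \eqref{edi} to extract a steady state along a diverging time sequence, with the $L^{3-m}$ bound surviving by weak lower semicontinuity and compact support/radial monotonicity coming from Lemma~\ref{lemma_stat}. The one place where the paper is slightly more careful than your sketch is the limit extraction: rather than picking times $t_j$ with $\mathcal{D}[\rho(t_j)]\to 0$ pointwise and trying to pass to the limit in $\mathcal{D}$ at those discrete times, the paper works with the shifted space-time functions $\rho_j(\cdot,t):=\rho(\cdot,t+t_j)$ on $\mathbb{R}^n\times(0,T)$, so that $\int_0^T\mathcal{D}[\rho_j]\,dt\to 0$, and then invokes the space-time compactness lemmas of \cite{CHVY} to pass to a time-independent limit solving the stationary equation; this is the more robust way to handle the nonlinear terms and to conclude $\partial_t\bar\rho=0$, and is the route I would recommend fleshing out if you wanted to make your argument rigorous.
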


\begin{proof}Let $\delta_0 \in (0,a)$ and $\epsilon>0$ be as given in Lemma~\ref{lem_1},
 and take any radially symmetric initial data $\rho_0 \in \mathcal{P}(\mathbb{R}^n) \cap L^\infty(\mathbb{R}^n) $ with $\|\rho_0\|_{3-m} < \delta_0$. It then follows from Lemma~\ref{lem_1} that the solution $\rho(\cdot,t)$ to \eqref{eq:evolution} with potential $W_{R,\epsilon}$ satisfies $\|\rho(t)\|_{3-m} < \delta_0$ for all $t\geq 0$. 

Since the interaction potential $W_{R,\epsilon}$ has linear growth for large $|x|$, we claim that the first moment of $\rho(\cdot,t)$ is uniformly bounded in time. To see this, first note that the interaction energy itself is uniformly bounded in time:  since $\mathcal{E}[\rho(t)] \leq \mathcal{E}[\rho_0]$ for all $t\geq 0$, using the non-negativity of $\frac{1}{m-1}\int \rho^m(x,t) dx$, we have that 
\[
\frac{1}{2}\iint_{\mathbb{R}^n \times \mathbb{R}^n} \rho(x,t) \rho(y,t) W_{R,\epsilon}(x-y) dxdy \leq \mathcal{E}[\rho_0]. 
\]
By definition of $W_{R,\epsilon}$ in \eqref{def_w0}--\eqref{def_w2}, we have that $W_{R,\epsilon}(x-y) dxdy \geq w_1(3R)+\epsilon(|x|-3R)_+ \geq \epsilon|x| +w_1(3R)- 3R\epsilon$, and applying it to the previous inequality gives
\[
\epsilon \iint_{\mathbb{R}^n \times \mathbb{R}^n} \rho(x,t) \rho(y,t) |x-y| dxdy \leq  3R\epsilon-w_1(3R) + 2 \mathcal{E}[\rho_0]. 
\] 
Finally, to relate the interaction energy with the first moment, by \cite[Lemma 2.8]{CDP} and the fact that $\rho_0 \in \mathcal{P}(\mathbb{R}^n)$, there exists a universal constant $C$, such that 
\begin{equation}\label{unif_m1}
\int_{\mathbb{R}^n} \rho(x,t) |x| dx \leq C \iint_{\mathbb{R}^n \times \mathbb{R}^n} \rho(x,t) \rho(y,t) |x-y| dxdy \leq \frac{C}{\epsilon} ( 3R\epsilon-w_1(3R)+ 2 \mathcal{E}[\rho_0]) =: C_1,
\end{equation}
finishing the proof of the claim. 

In addition, we have $\|\rho\|_{L^\infty(\mathbb{R}^n\times (0,\infty))} \leq C_2$ for some $C_2<\infty$. This can be done by the same bootstrap iterative argument in \cite[Theorem 1.1]{Liu2016} on the $L^p$ norm for a sequence of $p\to\infty$. Even though \cite{Liu2016} focused on the Newtonian potential in the diffusion-dominated regime, the same proof can be applied to our potential $W_{R,\epsilon}$ as well. Note that $W_{R,\epsilon}$ is identical to $W$ in a small neighborhood of the origin, thus it satisfies (W2) with $k>-n(m-1)$ and $k\geq -n+2$. The first assumption ensures that the equation is in the diffusion-dominated regime, and the second assumption ensures that it is no more singular than the Newtonian potential.

In the rest of the proof we aim to show that there exists a steady state $\rho_s \in \mathcal{P}(\mathbb{R}^n)\cap L^\infty(\mathbb{R}^n)$ with $\|\rho_s\|_{3-m}\leq \delta_0$ and $\int_{\mathbb{R}^n} \rho_s |x| dx \leq C_1$. Roughly speaking, the idea is that ``uniform-in-time bounds on $\rho(t)$ + tightness of $\{\rho(t)\}_{t>0}$ $\Rightarrow$ existence of a steady state''. The proof is mostly identical to \cite[Theorem 4.12]{CHVY}, except the following two differences. The first is that in \cite{CHVY}, tightness of $\{\rho(t)\}_{t>0}$ comes from a uniform-in-time bound of the second moment, which is obtained by using some special properties of Newtonian potential in 2D; whereas in this proof tightness comes from the uniform-in-time bound of the first moment \eqref{unif_m1}. The second difference is that in this proof we will only obtain the existence of a steady state $\rho_s$ with the desired bound on the $L^{3-m}$ norm and the first moment, but without any convergence result $\rho(t)\to \rho_s$ as $t\to\infty$; whereas in \cite{CHVY} one can obtain convergence as $t\to\infty$ due to uniqueness of steady state in that setting.

For the sake of completeness, we briefly sketch the main steps below. By the Energy Dissipation Inequality \eqref{edi}, the global weak solution $\rho$ satisfies the entropy inequality
\[
\lim_{t\to\infty} \mathcal{E}[\rho(t)] + \int_0^\infty \mathcal{D}[\rho(t)] dt \leq \mathcal{E}[\rho_0],
\]
where $\mathcal{D}[\rho(t)] := \int_{\mathbb{R}^n} \rho |\nabla h[\rho]|^2 dx$, with  $h[\rho] := \frac{m}{m-1}\rho^{m-1} + \rho*W_{R,\epsilon}$.
Since $W_{R,\epsilon}$ satisfies (W2) with $k>-n(m-1)$, we have that  $\mathcal{E}[\rho(t)]$ is bounded below. Thus $\int_0^\infty \mathcal{D}[\rho(t)] dx < \infty$, implying 
\begin{equation}\label{d_temp}
\lim_{t\to\infty} \int_t^\infty \mathcal{D}[\rho(s)]ds = 0.
\end{equation} For any diverging time sequence $\{t_j\}_{j=1}^\infty$, let us consider the sequence of functions $\{\rho_j\}_{j=1}^\infty$ given by 
\[
\rho_j(\cdot,t) := \rho(\cdot, t+t_j) \quad\text{in }  \mathbb{R}^n \times (0,T),
\] 
which is well-defined since $\rho(\cdot,t)$ is a global-in-time weak solution. Using \eqref{d_temp}, we have
\[
\int_0^T \mathcal{D}[\rho_j(t)] dt = \int_{t_j}^{t_j+T} \mathcal{D}[\rho(t)] dt \leq \int_{t_j}^{\infty} \mathcal{D}[\rho(t)] dt \to 0 \quad\text{ as } j\to\infty,
\]
thus equivalently, using the fact that $\mathcal{D}[\rho] = \int_{\mathbb{R}^n}  \rho |\nabla h[\rho]|^2 dx$, we have that
\begin{equation}\label{l2_temp}
\|\sqrt{\rho_j} |\nabla h[\rho_j]|\|_{L^2(\mathbb{R}^n\times(0,T))} \to 0 \quad\text{ as }j\to\infty.
\end{equation}

By \cite[Lemma 4.17]{CHVY}, there exists a function $ \bar\rho \in L^1(\mathbb{R}^n \times (0,T)) \cap L^m(\mathbb{R}^n \times (0,T))$ and a subsequence of $\{\rho_j\}$ (which we still denote by $\{\rho_j\}$), such that
\begin{equation}\label{weak_conv}
\rho_j(x,t) \rightharpoonup \bar\rho(x,t) \quad \text{ in } L^1(\mathbb{R}^n \times (0,T)) \cap L^m(\mathbb{R}^n \times (0,T))
\end{equation}
as $j\to \infty$. Our goal is to show that the function $\bar\rho(x,t)$ is in fact independent of time (call it $\rho_s(x)$), and it is a steady state of \eqref{eq:evolution} with $\|\rho_s\|_{3-m}\leq \delta_0$ and $\int \rho_s |x| dx \leq C_1$.

First, note that $\|\rho_j\|_{L^\infty(\mathbb{R}^n\times(0,T))}\leq C_2$ for all $j$, which follows from the bound $\|\rho\|_{L^\infty(\mathbb{R}^n\times (0,\infty))}\leq C_2$. In addition, recall that we have $\|\rho(t)\|_{3-m}\leq \delta_0$ and $\|\rho(t) |x|\|_1\leq C_1 $ for all $t\geq 0$. Combining these with the weak convergence in \eqref{weak_conv}, we have that 
\begin{equation}\label{rho_bd1}
\|\bar\rho\|_{L^\infty(0,T; L^\infty(\mathbb{R}^n))} \leq C_2,\quad \|\bar\rho\|_{L^\infty(0,T; L^{3-m}(\mathbb{R}^n))} \leq \delta_0, \quad \big\|\bar\rho |x| \big\|_{L^\infty(0,T; L^{1}(\mathbb{R}^n))} \leq C_1.
\end{equation}


Proceeding as in Lemma 4.18, 4.19 and 4.20 of \cite{CHVY}, along a further subsequence (which we still denote by $\{\rho_j\}$), we have 
\begin{equation}\label{conv_temp2}
\begin{split}
&\rho_j  \to \bar \rho \quad\text{ in }L^q(\mathbb{R}^n \times (0,T)) \quad\text{ for any }1\leq q < \infty,\\
&\rho_j^p  \to \bar\rho^p \quad\text{ in }L^2(0,T; H^1(\mathbb{R}^n)) \quad\text{ for any }m-\frac{1}{2} \leq p < \infty,\\
&\sqrt{\rho_j} \nabla h[\rho_j] \rightharpoonup \sqrt{\bar\rho} \nabla h[\bar\rho] \quad\text{ in } L^2(\mathbb{R}^n \times (0,T);\mathbb{R}^n).
\end{split}
\end{equation}
The above convergence results yield that $\bar\rho$ is a weak distributional solution to \eqref{eq:evolution} with test functions in $L^2(0,T; H^1(\mathbb{R}^n))$. In addition, since the $L^2$ norm is weakly lower semicontinuous, combining the third weak convergence result in \eqref{conv_temp2} with \eqref{l2_temp} gives that 
\[
\bar\rho|\nabla h[\bar\rho]|^2 = 0 \quad\text{ a.e. in }\mathbb{R}^n \times(0,T),
\]
implying that $\sqrt{\bar\rho}\nabla h[\bar\rho] = 0$ a.e. in $\mathbb{R}^n \times(0,T)$.
In addition, due to the convergence results in \eqref{conv_temp2} we know that $\bar\rho$ is a weak distributional solution to \eqref{eq:evolution} with test functions in $L^2(0,T; H^1(\mathbb{R}^n))$. This gives that $\partial_t \bar \rho = 0 $ in $L^2(0,T;H^{-1}(\mathbb{R}^2))$, thus $\bar\rho(t,x) \equiv \rho_s(x)$ is independent of time, and is a (weak) steady state. Since $\rho(\cdot,t)$ is a radial solution, we know that $\rho_s$ is radial as well, and thus must be radially decreasing by Theorem 2.2 of \cite{CHVY}.
From \eqref{rho_bd1}, we have that 
\[
\|\rho_s\|_\infty \leq C_2, \quad \|\rho_s\|_{3-m}\leq \delta_0, \quad\text{and }\int_{\mathbb{R}^n} \rho_s(x) |x| dx \leq C_1.
\]
From  \cite[Lemma 2.22]{CHVY} and using the uniform first moment of $\rho_j$, we have that no mass is lost as we take the $j\to\infty$ limit, that is, $\rho_s \in \mathcal{P}(\mathbb{R}^n)$. 

Finally, note that the potential $W_{R,\epsilon}$ satisfies (W1)--(W4): It satisfies (W1) and (W2) since $W_{R,\epsilon} = W$ in some neighborhood of the origin. It satisfies (W3) and (W4) since $W'(r)$ is a constant for all sufficiently large $r$. In addition, $\lim_{r\to\infty}W_{R,\epsilon}=\infty$, which allows us to apply Lemma~\ref{lemma_stat}(b,d) to conclude that $\rho_s$ is radially decreasing and compactly supported.
\end{proof}

Now we are ready to prove the main theorem for the non-uniqueness result. 


\begin{proof}[\textbf{\textup{Proof of Theorem~\ref{thm:nonuniqueness}}}]

 The potential $\widetilde W$ is constructed by an inductive argument. For the base step, using the given potential $W_0$ and $R_0>0$, let us take any $a>0$ (e.g. $a=1$),  and let $W_1:=W_{R_0,\epsilon}$ be the interaction potential given in Lemma~\ref{lemma2}. From the way we modify the tails in \eqref{def_w0}--\eqref{def_w2} and the assumptions on $W_0$, we have that $W_1 \in C^\infty(\mathbb{R}^n\setminus\{0\}) \cap W^{1,\infty}(B(0,R_0)^c)$, with 
 \[
 \|\nabla W_1\|_{L^\infty(B(0,R_0)^c)} \leq \max\{\|\nabla W_0\|_{L^\infty(B(0,3R_0)\setminus B(0,R_0))}, 1\}.
 \] By Lemma~\ref{lemma2}, equation \eqref{eq:evolution} with potential $W_1$ has a compactly supported radially decreasing steady state $\rho_s^1 \in \mathcal{P}(\mathbb{R}^n) \cap L^\infty(\mathbb{R}^n)$, and we denote its support by $B(0,R_1)$. (See the red curves in Figure~\ref{fig_iter}).

\begin{figure}[h!]
\begin{center}
\includegraphics[scale=1]{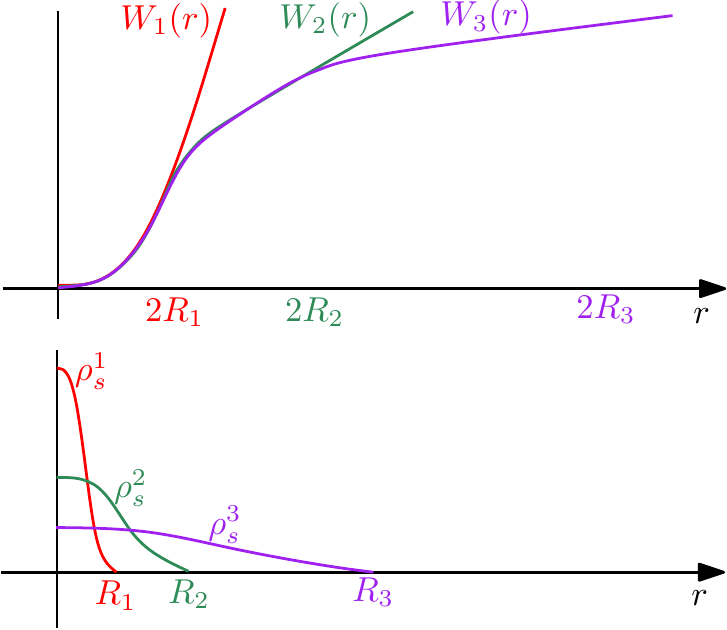}
\caption{Illustration of the iterative construction of $\widetilde W$, and the sequence of steady states $\{\rho_s^i\}$ it leads to.\label{fig_iter}}
\end{center}
\end{figure}

Next we do the inductive step. For $N\geq 1$, assume that we have already obtained an attractive potential $W_N \in  C^\infty(\mathbb{R}^n \setminus\{0\}) \cap W^{1,\infty}(B(0,R_0)^c)$  with a sequence of radial steady states $\{\rho_s^i\}_{i=1}^N$, where $\rho_s^i  \in \mathcal{P}(\mathbb{R}^n) \cap L^\infty(\mathbb{R}^n)$ is compactly supported, and its support is denoted by $B(0,R_i)$ respectively for $i=1,\dots,N$. In addition, assume that $W_N$ satisfies the estimate\begin{equation}\label{estimate_temp}
\|\nabla W_{N}\|_{L^\infty(B(0,R_0)^c)} \leq \max\{\|\nabla W_0\|_{L^\infty(B(0,3R_0)\setminus B(0,R_0))}, 1\}.
\end{equation}
Our goal is to modify $W_N$ into $W_{N+1} \in  C^\infty(\mathbb{R}^n \setminus\{0\}) \cap W^{1,\infty}(B(0,R_0)^c)$, such that it has an extra radial steady state $\rho_s^{N+1}$ in addition to $\{\rho_s^i\}_{i=1}^N$, and it satisfies the estimate \eqref{estimate_temp} with $N$ replaced by $N+1$. 

 We construct $W_{N+1}$ as follows. Let
\begin{equation}\label{wn}
W := W_N,\quad  a:= \frac{1}{2}\min_{1\leq i\leq N} \left\|\rho_s^i\right\|_{3-m}, \quad R:= \max\{R_1, \dots, R_N, 2^N\}.
\end{equation}
For such $W$, $a$ and $R$, let $\delta_0 \in (0,a)$ and $\epsilon \in (0,1)$ be as given in Lemma~\ref{lemma2}, and let $W_{N+1}:=W_{R,\epsilon}$ be the modified potential.  By Lemma~\ref{lem_1} and Lemma~\ref{lemma2}, equation \eqref{eq:evolution} with potential $W_{N+1}$ has a radially decreasing steady state $\rho_s^{N+1}\in \mathcal{P}(\mathbb{R}^n) \cap L^\infty(\mathbb{R}^n)$ with  $\|\rho_s^{N+1}\|_{3-m} \leq a$. Note that $\rho_s^{N+1}$ must be different from all $\{\rho_s^i\}_{i=1}^N$, since its $L^{3-m}$ norm is less than a half of each of them by definition of $a$. In addition, since $W_{N+1}$ agrees with $W_N$ in $B(0,2R)$, where $R\geq R_i$ for $1\leq i\leq N$, we have that $\{\rho_s^i\}_{i=1}^N$ are still steady states for the potential $W_{N+1}$. As a result, the potential $W_{N+1}$ has at least $N+1$ radial steady states $\{\rho_s^i\}_{i=1}^{N+1}$. Moreover, from the way we modify the tails in \eqref{def_w0}--\eqref{def_w2}, and using the fact that $\epsilon\in(0,1)$, we have $W_{N+1} \in C^\infty(\mathbb{R}^n\setminus\{0\}) \cap W^{1,\infty}(B(0,R_0)^c)$, with the estimate 
\[
\|\nabla W_{N+1}\|_{L^\infty(B(0,R_0)^c)} \leq \max\{\|\nabla W_{N}\|_{L^\infty(B(0,R_0)^c)}, 1 \} \leq \max\{\|\nabla W_0\|_{L^\infty(B(0,3R_0)\setminus B(0,R_0))}, 1\}.
\]
where we used the induction assumption \eqref{estimate_temp} in the second inequality.

Finally, to find an attractive potential $\widetilde W$ with infinitely many steady states, let it be given by 
\begin{equation}\label{lim_w}
\widetilde W(x) := \lim_{N\to\infty} W_N(x).
\end{equation} First note that this limit exists pointwise, since for each $N \geq 1$ we have $W_i = W_N$  for all $i\geq N$ in $B(0,2R)$, where $R\geq 2^N$ is given by \eqref{wn}.  This also gives that $\widetilde W \in C^\infty(\mathbb{R}^n\setminus\{0\}) \cap W^{1,\infty}(B(0,R_0)^c)$, and it satisfies the estimate \eqref{estimate_temp} with $W_N$ replaced by $\widetilde W$.  In addition, note that $\widetilde W = W_N$ in $B(0,2R)$ with $R$ given by \eqref{wn}, where we have $R>R_N$. This implies $\rho_s^N$ is a steady state of \eqref{eq:evolution} with potential $\widetilde W$. Since $N$ is arbitrary, we have that  $\widetilde W$ leads to an infinite sequence of radially decreasing steady states $\{\rho_s^i\}_{i=1}^\infty$. 
\end{proof}

\begin{appendix}
\section{Fractional regularity}
\begin{lemma}\label{lem:app}
Assume that $W$ satisfies \textup{(W1)} and \textup{(W2)}. Then for any radial bump function $\phi\in C^\infty_c(\mathbb{R}^n)$, we have
\begin{equation}\label{a1temp}
(-\Delta)^{s}(W\phi)\in L^1(\R^n)\qquad\mbox{for any $0<s<\min\left\{\frac{n+k}{2},1 \right\}$.}
\end{equation}
\end{lemma}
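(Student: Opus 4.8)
The plan is to localize the estimate to a neighbourhood of the origin, where $W\phi$ has its only singularity, and then bound $(-\Delta)^s(W\phi)$ pointwise by a power of $|x|$ that is integrable precisely when $s<\tfrac{n+k}{2}$. First I would pick a radial cut-off $\eta\in C_c^\infty(\R^n)$ with $\eta\equiv1$ on $B_{1/2}$ and $\supp\eta\subset B_1$, and write $W\phi=\eta W\phi+(1-\eta)W\phi$. Since $W\in C^\infty(\R^n\setminus\{0\})$ and $1-\eta$ vanishes near the origin, $(1-\eta)W\phi\in C_c^\infty(\R^n)$, so $(-\Delta)^s\big((1-\eta)W\phi\big)$ is bounded and decays like $|x|^{-n-2s}$ at infinity, hence lies in $L^1(\R^n)$. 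Thus it suffices to treat $f:=\eta W\phi$. From \textup{(W1)}--\textup{(W2)} one records: $f$ is supported in $\overline{B_1}$, is $C^\infty$ in $\R^n\setminus\{0\}$, satisfies $|\nabla f(x)|\le |W'(|x|)|\,|\phi\eta|+|W(|x|)|\,|\nabla(\phi\eta)|\lesssim |x|^{k-1}$ for $0<|x|<1$, and (integrating the bound on $W'$) $|f(x)|\lesssim |x|^k$ for $k\le0$ (with a harmless logarithmic correction when $k=0$), while for $k>0$ one may subtract a constant multiple of $\phi$ from $f$ — whose fractional Laplacian is Schwartz-like and hence in $L^1$ — to arrange $|f(x)|\lesssim|x|^k$. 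In particular $f\in L^1(\R^n)$ because $k>-n$.

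Next I would use the pointwise formula $(-\Delta)^sf(x)=c_{n,s}\,\mathrm{P.V.}\!\int_{\R^n}\frac{f(x)-f(y)}{|x-y|^{n+2s}}\,dy$ and estimate $\int_{\R^n}|(-\Delta)^sf(x)|\,dx$ by splitting the $x$-integral at $|x|=1$. For $|x|\ge1$ we have $f(x)=0$, and since $\supp f\subset\overline{B_{1/2}}$ we get $|x-y|\ge\tfrac12$ on the support of the integrand, so $|(-\Delta)^sf(x)|\lesssim |x|^{-n-2s}\|f\|_1$, which is integrable on $\{|x|\ge1\}$. For $|x|<1$, writing $r=|x|$, the target is
\begin{equation*}
|(-\Delta)^sf(x)|\lesssim r^{k-2s}+1\qquad(0<|x|<1),
\end{equation*}
which is integrable on $B_1$ exactly when $k-2s>-n$, i.e. $s<\tfrac{n+k}{2}$. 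To obtain it I split the $y$-integral into the far part $\{|x-y|\ge r/2\}$ and the near part $\{|x-y|<r/2\}$. On the far part only the size bounds on $f$ are needed: the $f(x)$-term contributes $\lesssim |f(x)|\,r^{-2s}\lesssim r^{k-2s}$, and the $f(y)$-term is handled by separating $|y|<r/4$ (where $|x-y|\gtrsim r$, giving $\lesssim r^{-n-2s}\int_{|y|<r/4}|f|\lesssim r^{-n-2s}r^{n+k}=r^{k-2s}$, using $k>-n$) from $r/4\le|y|\le1$ (a standard estimate for convolutions of Riesz-type kernels, giving $\lesssim r^{k-2s}+1$).

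The near part is the technical heart. There $|x-y|<r/2$ forces $|y|>r/2$, so $f$ is $C^\infty$ on $B(x,r/2)$ and away from the origin. When $s<\tfrac12$ the crude first-order bound $|f(x)-f(y)|\le\big(\sup_{B(x,r/2)}|\nabla f|\big)|x-y|\lesssim r^{k-1}|x-y|$ suffices, giving $\lesssim r^{k-1}\int_{|w|<r/2}|w|^{1-n-2s}\,dw\lesssim r^{k-1}r^{1-2s}=r^{k-2s}$; this uses nothing beyond the gradient bound from \textup{(W2)}. When $\tfrac12\le s<1$ this integral diverges and one must exploit the cancellation of the principal value: rewriting $\mathrm{P.V.}\!\int_{|x-y|<r/2}\frac{f(x)-f(y)}{|x-y|^{n+2s}}\,dy=\tfrac12\int_{|w|<r/2}\frac{2f(x)-f(x+w)-f(x-w)}{|w|^{n+2s}}\,dw$ and bounding the second-order difference by $\big(\sup_{B(x,r/2)}|D^2f|\big)|w|^2$ yields $\lesssim\big(\sup_{B(x,r/2)}|D^2f|\big)r^{2-2s}$. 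Everything then reduces to $\sup_{B(x,r/2)}|D^2f|\lesssim r^{k-2}$, and \emph{this is the step I expect to be the main obstacle}: it is precisely where the (W2)-comparison of $W$ with the Riesz profile $|x|^k/k$ must be used at the level of second radial derivatives (for a genuine Riesz potential, and for mild perturbations of it, $|W''(\rho)|\lesssim\rho^{k-2}$, hence $|D^2f(x)|\lesssim|x|^{k-2}$; note that in the range $\tfrac12\le s<1$ one necessarily has $n+k>1$, so $|x|^{k-1},|x|^{k-2}\in L^1_{loc}$, consistently). One can also sidestep this by claiming the lemma only for $s<\tfrac12$, which is already enough for the regularity bootstrap in Lemma~\ref{prop:regularity} (each iteration still gains the positive amount $\min\{n+k,1\}$). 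Combining the far and near estimates gives $|(-\Delta)^sf(x)|\lesssim r^{k-2s}+1$ on $B_1$, and together with the $|x|\ge1$ bound this yields $(-\Delta)^sf\in L^1(\R^n)$, hence $(-\Delta)^s(W\phi)\in L^1(\R^n)$.

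As an alternative, since $W\phi\in L^1$ is compactly supported, $(-\Delta)^s(W\phi)\in L^1$ is equivalent to $W\phi$ lying in the Bessel potential space $W^{2s,1}$, which one can extract from the heat-semigroup identity $(-\Delta)^sg=c_s\int_0^\infty(e^{t\Delta}g-g)\,t^{-1-s}\,dt$ together with the decay $\|e^{t\Delta}(W\phi)-W\phi\|_{L^1}\lesssim t^{\gamma}$ as $t\to0^+$; the same second-order modulus-of-continuity issue reappears there as the need to take $\gamma$ close to $1$.
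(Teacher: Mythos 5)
You take a genuinely different route from the paper: the paper mollifies $f:=W\phi$ and applies the Ferrari--Verbitsky one-dimensional radial formula for $(-\Delta)^s$, reducing everything to a single integral in $t\in[1,\infty)$, whereas you work directly with the $\R^n$ principal-value kernel and split near/far at $|x-y|=|x|/2$. The obstacle you flag for $s\ge\tfrac12$ is real, and in fact the paper's own argument, as written, shares it: near $t=1$ the radial kernel behaves like $(t-1)^{-1-2s}$, and both $|f(r)-f(rt)|$ and $|f(r)-f(r/t)|\,t^{-n+2s}$ are estimated \emph{separately} by the first-difference bound $\lesssim r^k(t-1)$, leading to $\int_1^2(t-1)^{-2s}\,dt$, which diverges once $s\ge\tfrac12$. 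What rescues the range $\tfrac12\le s<1$ is the cancellation $f(r)-f(rt)+\bigl(f(r)-f(r/t)\bigr)t^{-n+2s}=O\bigl((t-1)^2\bigr)$ near $t=1$ — the radial analogue of the symmetric second difference you write down — but exploiting it requires a two-sided bound on $W''$ (say $|W''(r)|\lesssim r^{k-2}$), which (W1)--(W2) do not supply (they control only $W'$, and in fact only from one side). So your diagnosis is correct, and the statement that is honestly proved from (W1)--(W2) alone is the lemma restricted to $s<\min\{(n+k)/2,\,1/2\}$. Your fallback to this range is the right repair and costs nothing downstream: in the bootstrap of Lemma~\ref{prop:regularity}, each iteration still gains a positive H\"older increment arbitrarily close to $\min\{n+k,1\}$, so $C^\infty$-smoothness in $B_{R_0}$ is reached after finitely many steps exactly as before. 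Alternatively, one could strengthen (W2) by requiring also $|W''(r)|\le C_w r^{k-2}$ on $(0,1)$ — a hypothesis satisfied by all Riesz potentials $|x|^k/k$ and by the paper's motivating examples — after which both your argument and the paper's close in the full range $s<\min\{(n+k)/2,1\}$.
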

\begin{proof}

Since $W$ satisfies (W2) (if $k>0$, without loss of generality we can assume $W(0)=0$), there exists some finite $C_0>0$ such that for any $r\in(0,1)$ we have
\begin{equation}\label{temp_w}
|(W\phi)'(r)|\le C_0 r^{k-1}\qquad\mbox{and}\qquad |W\phi(r)|\le C_0 r^k,
\end{equation}
where in the special case $k=0$ we replace $C_0 r^k$ in the second inequality by $C_0 r^{-\delta}$ for any $\delta>0$. In fact, since $W\phi$ is compactly supported, we can modify $C_0$ into another constant (still denote it by $C_0$) such that \eqref{temp_w} holds for all $r>0$.

Let $\eta$ be a standard mollifier supported in $B_1$ and $\eta_\epsilon(x) := \epsilon^{-n} \eta(\epsilon^{-1} x)$ be its dilation. For $\epsilon>0$, we consider a regularization 
$f_\epsilon := (W\phi)*\eta_\epsilon$. Note that for all  $\epsilon \in (0,1)$ that is sufficiently small, $f_\epsilon$ satisfies the inequality \eqref{temp_w} for all $r>0$ with the constant $C_0$ replaced by $2C_0$, and $\supp f_\epsilon$ is also uniformly bounded in some $B_R$ for all small $\epsilon$. 

We will show that for any $0<s<\min\left\{\frac{n+k}{2}, 1 \right\}$ and all sufficiently small $\epsilon>0$, we have
$$
\|(-\Delta)^s f_\epsilon \|_{L^1}\le C\left(n,s,k,C_0, R\right),
$$
where the right hand side is independent of $\epsilon$. The result then follows from taking $\epsilon\to 0$, lower semi-continuity and Sobolev embeddings. For notational convenience, we denote $f_\epsilon$ by $f$ from now on.

For smooth compactly supported radial functions $f$, we can use the formula for the fractional Laplacian in \cite[Theorem 1.1]{ferrari2012radial}: given $s\in(0,1)$ and $r=|x|>0$ for $x \in \mathbb{R}^n$, we have 
 \begin{equation}\label{frac_lap}\begin{split}
(-\Delta)^sf(r)= c_{s,n} \, r^{-2s}\int_1^{+\infty}\left(f(r)-f(r t)+\left(f(r)-f\left(\frac{r}{t}\right)\right)t^{-n+2s}\right)t( t^2-1
  )^{-1-2s} H(t)dt,
\end{split}
\end{equation}
where
$$
H(t)=2\pi\alpha_n\int_{0}^{\pi}(\sin \theta)^{^{n-2}} \frac{ \left(  \sqrt{t^2-\sin^2\theta}+\cos\theta\right)^{1+2s}}{\sqrt{t^2-\sin^2 \theta}} \, d\theta, \quad t \ge 1, \quad 
\alpha_n= \frac{\pi^{\frac{n-3}{2}}} { \Gamma ( \frac{n-1}{2} )}.
$$
Analyzing the behavior of $H$, there exists $c,C>0$ depending on $n$ and $s$, such that $ct^{-2s}\leq H(t)\leq Ct^{-2s}$ for all $t\geq 1$. Using this inequality,  \eqref{frac_lap} becomes
\begin{equation}\label{frac bound}
|(-\Delta)^s f(r)|\le c_{s,n} r^{-2s} \int_1^\infty \left(|f(r)-f(rt)|+\frac{\left|f(r)-f(r/t)\right|}{t^{n-2s}}\right) \frac{dt}{(t-1)^{1+2s}}.    
\end{equation}
Let us consider the case $r<1$ first; the case $r\geq 1$ will be treated at the end of this proof. For $r\in (0,1)$, we split the integral \eqref{frac bound} for $t\in(1,2]$ and $t\in[2,\infty)$. For $t\in[1,2]$ and $r\in (0,1)$, since $f$ satisfies \eqref{temp_w} with $C_0$ replaced by $2C_0$, we have the following:
$$
|f(r)-f(rt)|\le \sup_{\xi\in[r,rt]}|f'(\xi)|r(t-1)\le C_1 r^k (t-1),
$$
and
$$
\left|f(r)-f\left(\frac{r}{t}\right)\right|t^{-n+2s} \le  \sup_{\xi \in[r/t,r]}|f'(\xi)| t^{-n+2s} (r-\frac{r}{t}) \le \sup_{\xi \in[r/t,r]}|f'(\xi)|  r\frac{(t-1)}{t^{n+1-2s}}\le C_1 r^k (t-1).
$$
For $t\in[2,\infty)$ and $r\in (0,1)$, since $f$ satisfies \eqref{temp_w} (with constant $2C_0$), we have the bounds
$$
|f(r)-f(rt)|\le |f(r)| + |f(rt)| \leq C_1 r^k
$$
and
$$
\left|f(r)-f\left(\frac{r}{t}\right)\right|t^{-n+2s}\le \left( |f(r)| + \left|f\left(\frac{r}{t}\right)\right|\right) t^{-n+2s}\leq  C_1 \frac{r^k(1+t^{-k})}{t^{n-2s}},
$$
where we replace $r^k$ by $r^{-\delta}$ for $0<\delta\ll 1$ if $k=0$. 
Plugging these bounds into \eqref{frac bound} and using the fact that $k>-n$, if $k\neq 0$, we obtain 
$$
|(-\Delta)^s f(r)|\le C(n,s,C_1) r^{k-2s} \quad\text{ for any } r\in (0,1),
$$
and due to the assumption $s<\frac{n+k}{2}$, we have  $(-\Delta)^s f \in L^1(B_1)$. And if $k=0$, we have $|(-\Delta)^s f(r)|\le C(n,s,C_1) r^{-\delta-2s}$ for all $\delta>0$, which still gives $(-\Delta)^s f \in L^1(B_1)$ if we set $\delta\ll 1$ sufficiently small. 

Moreover, because $f$ is compactly supported and smooth, we have that for any $s \in (0,1)$,
$$
|(-\Delta)^s f(r)|\le C(n,s,|f|_{C^2},|\supp f|) r^{-n-2s}\qquad\mbox{for any $r>1$.}
$$
which implies $(-\Delta)^s \in L^1(B_1^c)$. Combining this with the previous bound in $B_1$ finishes the proof of the lemma.
\end{proof}

\end{appendix}

\FloatBarrier

\bibliographystyle{abbrv}

\bibliography{ref}

\end{document}